\newcommand{\oneplusmus}{\sqrt{\mu_s}}
\renewcommand{\root}{\operatorname{root}}
\newcommand{\norm}[1]{\left\lVert #1 \right\rVert}
\newcommand{\map}[3]{#1:#2 \rightarrow #3}
\newcommand{\setdef}[2]{\{#1 \;|\; #2\}}
\newcommand{\real}{{\mathbb{R}}}
\newcommand{\realpos}{\mathbb{R}_{>0}}
\newcommand{\naturals}{\mathbb{N}}
\newcommand{\Xhb}{X_{\operatorname{hb}}}
\newcommand{\Xhba}{X^a_{\operatorname{hb}}}
\newcommand{\step}{\operatorname{step}}
\newcommand{\dstep}{\operatorname{step}^{\operatorname{d}}}
\newcommand{\pstep}{\operatorname{step}^{\operatorname{p}}}
\newcommand{\hstep}{\frak{step}}
\newcommand{\hsd}{\mathfrak{b}^{\operatorname{d}}}
\newcommand{\hsp}{\mathfrak{b}^{\operatorname{p}}}
\newcommand{\ET}{\operatorname{ET}}
\newcommand{\ST}{\operatorname{ST}}
\newcommand{\pb}{{\operatorname{p}}}
\newcommand{\db}{{\operatorname{d}}}
\newcommand{\miet}{\operatorname{MIET}}
\newcommand{\oprocendsymbol}{\hbox{$\bullet$}}
\newcommand{\oprocend}{\relax\ifmmode\else\unskip\hfill\fi\oprocendsymbol}
\newcommand{\longthmtitle}[1]{\mbox{}{{\rm (#1).}}}
\newcommand{\feedback}{\mathfrak{k}}
\newtheorem{theorem}{Theorem}[section]
\newtheorem{proposition}[theorem]{Proposition}
\newtheorem{lemma}[theorem]{Lemma}
\newtheorem{remark}[theorem]{Remark}
\newcommand{\fA}{\mathfrak{A}}
\newcommand{\fD}{\mathfrak{D}}
\newcommand{\fC}{\mathfrak{C}}
\newcommand{\fB}{\mathfrak{B}}
\title{\LARGE \bf Resource-Aware Discretization of Accelerated
  Optimization Flows}
\author{Miguel Vaquero \quad Pol Mestres \quad Jorge
  Cort\'es 
  \thanks{This work was supported by NSF Award
    ECCS-1917177.}
  \thanks{MV and JC are with the Department of Mechanical and
    Aerospace Engineering, University of California, San Diego,
    {\tt\small \{mivaquerovallina,cortes\}@ucsd.edu}. PM is with the
    Centro de Formacion Interdisciplinaria Superior, Universidad
    Polit\'ecnica de Catalu\~{n}a, {\tt\small polmestres4@gmail.com}}%
}
\begin{document}

\maketitle

\begin{abstract}
  This paper tackles the problem of discretizing accelerated
  optimization flows while retaining their convergence properties.
  Inspired by the success of resource-aware control in developing
  efficient closed-loop feedback implementations on digital systems,
  we view the last sampled state of the system as the resource to be
  aware of.  The resulting variable-stepsize discrete-time algorithms
  retain by design the desired decrease of the Lyapunov certificate of
  their continuous-time counterparts.  Our algorithm design employs
  various concepts and techniques from resource-aware control that, in
  the present context, have interesting parallelisms with the
  discrete-time implementation of optimization algorithms. These
  include derivative- and performance-based triggers to monitor the
  evolution of the Lyapunov function as a way of determining the
  algorithm stepsize, exploiting sampled information to enhance
  algorithm performance, and employing high-order holds using more
  accurate integrators of the original dynamics.  Throughout the
  paper, we illustrate our approach on a newly introduced
  continuous-time dynamics termed heavy-ball dynamics with displaced
  gradient, but the ideas proposed here have broad applicability to
  other globally asymptotically stable flows endowed with a Lyapunov
  certificate.
\end{abstract}

\section{Introduction}\label{sec:intro}

A recent body of research seeks to understand the acceleration
phenomena of first-order discrete optimization methods by means of
models that evolve in continuous time.  Roughly speaking, the idea is
to study the behavior of ordinary differential equations (ODEs) which
arise as continuous limits of discrete-time accelerated
algorithms. The basic premise is that the availability of the powerful
tools of the continuous realm, such as differential calculus, Lie
derivatives, and Lyapunov stability theory, can be then brought to
bear to analyze and explain the accelerated behavior of these flows,
providing insight into their discrete counterparts.  Fully closing the
circle to provide a complete description of the acceleration
phenomenon requires solving the outstanding open question of how to
discretize the continuous flows while retaining their accelerated
convergence properties.  However, the discretization of accelerated
flows has proven to be a challenging task, where retaining
acceleration seems to depend largely on the particular ODE and the
discretization method employed.  This paper develops a resource-aware
approach to the discretization of accelerated optimization flows.


\subsubsection*{Literature Review}


The acceleration phenomenon goes back to the seminal
paper~\cite{BTP:64} introducing the so-called heavy-ball method, which
employed momentum terms to speed up the convergence of the classical
gradient descent method. 
The heavy-ball method achieves optimal convergence rate in a
neighborhood of the minimizer for arbitrary convex functions and
global optimal convergence rate for quadratic objective functions.
Later on, the work~\cite{YEN:83} proposed the Nesterov's accelerated
gradient method and, employing the technique of estimating sequences,
showed that it converges globally with optimal convergence rate for
convex and strongly-convex smooth functions.  The algebraic nature of
the technique of estimating sequences does not fully explain the
mechanisms behind the acceleration phenomenon, and this has motivated
many approaches in the literature to provide fundamental understanding
and insights. These include coupling dynamics~\cite{ZAZ-LO:17},
dissipativity theory~\cite{BH-LL:17}, integral quadratic
constraints~\cite{LL-BR-AP:16,BVS-RAF-KML:18}, and geometric
arguments~\cite{SB-YTL-MS:15}.

Of specific relevance to this paper is a recent line of research
initiated by~\cite{WS-SB-EJC:16} that seeks to understand the
acceleration phenomenon in first-order optimization methods by means
of models that evolve in continuous time.~\cite{WS-SB-EJC:16}
introduced a second-order ODE as the continuous limit of Nesterov's
accelerated gradient method and characterized its accelerated
convergence properties using Lyapunov stability analysis.  The ODE
approach to acceleration now includes the use of Hamiltonian dynamical
systems~\cite{MB-MJ-AW:18,CJM-DP-YWT-BO-AD:18}, inertial systems with
Hessian-driven damping~\cite{HA-ZC-JF-HR:19}, and high-resolution
ODEs~\cite{BS-SSD-MIJ-WJS:18-arxiv,BS-JG-SK:20}.  This body of
research is also reminiscient of the classical dynamical systems
approach to algorithms in optimization, see~\cite{RWB:91,UH-JBM:94}.
The question of how to discretize the continuous flows while
maintaining their accelerated convergence rates has also attracted
significant attention, motivated by the ultimate goal of fully
understanding the acceleration phenomenon and taking advantage of it
to design better optimization algorithms.  Interestingly,
discretizations of these ODEs do not necessarily lead to
acceleration~\cite{BS-SSD-MIJ-WJS:19-arxiv}.  In fact, explicit
discretization schemes, like forward Euler, can even become
numerically unstable after a few iterations~\cite{AW-ACW-MIJ:16}.
Most of the discretization approaches found in the literature are
based on the study of well-known integrators, including symplectic
integrators~\cite{MB-MJ-AW:18,AW-LM-AW:19}, Runge-Kutta
integrators~\cite{JZ-AM-SS-AJ:18} or modifications of Nesterov's
\emph{three sequences}~\cite{AW-ACW-MIJ:16,AW-LM-AW:19,ACW-BR-MIJ:18}.
Our previous work~\cite{MV-JC:19-nips} instead developed a
variable-stepsize discretization using zero-order holds and
state-triggers based on the derivative of the Lyapunov function of the
original continuous flow. Here, we provide a comprehensive approach
based on powerful tools from resource-aware control, including
performance-based triggering and state holds that more effectively use
sampled information.  Other recent approaches to the acceleration
phenomena and the synthesis of optimization algorithms using
control-theoretic notions and techniques include~\cite{ASK-PME-TK:18},
which employs hybrid systems to design a continuous-time dynamics with
a feedback regulator of the viscosity of the heavy-ball ODE to
guarantee arbitrarily fast exponential convergence,
and~\cite{DH-RGS:19}, which introduced an algorithm which alternates
between two (one fast when far from the minimizer but unstable, and
another slower but stable around the minimizer) continuous heavy-ball
dynamics.

\subsubsection*{Statement of Contributions}

This paper develops a resource-aware control framework to the
discretization of accelerated optimization flows that fully exploits
their dynamical properties.  Our approach relies on the key
observation that resource-aware control provides a principled way to
go from continuous-time control design to real-time implementation
with stability and performance guarantees by opportunistically
prescribing when certain resource should be employed.  In our
treatment, the resource to be aware of is the last sampled state of
the system, and hence what we seek to maximize is the stepsize of the
resulting discrete-time algorithm.
Our first contribution is the introduction of a second-order
differential equation which we term heavy-ball dynamics with displaced
gradient. This dynamics generalizes the continuous-time heavy-ball
dynamics analyzed in the literature by evaluating the gradient of the
objective function taking into account the second-order nature of the
flow.  We establish that the proposed dynamics retains the same
convergence properties as the original one while providing additional
flexibility in the form of a design parameter.

Our second contribution uses trigger design concepts from
resource-aware control to synthesize criteria that determine the
variable stepsize of the discrete-time implementation of the
heavy-ball dynamics with displaced gradient.  We refer to these
criteria as event- or self-triggered, depending on whether the
stepsize is implicitly or explicitly defined.  We employ derivative-
and performance-based triggering to ensure the algorithm retains the
desired decrease of the Lyapunov function of the continuous flow. In
doing so, we face the challenge that the evaluation of this function
requires knowledge of the unknown optimizer of the objective
function. To circumvect this hurdle, we derive bounds on the evolution
of the Lyapunov function that can be evaluated without knowledge of
the optimizer.  We characterize the convergence properties of the
resulting discrete-time algorithms, establishing the existence of a
minimum inter-event time and performance guarantees with regards to
the objective function. 

Our last two contributions provide ways of exploiting the sampled
information to enhance the algorithm performance. Our third
contribution provides an adaptive implementation of the algorithms
that adaptively adjusts the value of the gradient displacement
parameter depending on the region of the space to which the state
belongs.  Our fourth and last contribution builds on the fact that the
continuous-time heavy-ball dynamics can be decomposed as the sum of a
second-order linear dynamics with a nonlinear forcing term
corresponding to the gradient of the objective function. Building on
this observation, we provide a more accurate hold for the
resource-aware implementation by using the samples only on the
nonlinear term, and integrating exactly the resulting linear system
with constant forcing.  We establish the existence of a minimum
inter-event time and characterize the performance with regards to the
objective function of the resulting high-order-hold algorithm.
Finally, we illustrate the proposed optimization algorithms in
simulation, comparing them against the heavy-ball and Nesterov's
accelerated gradient methods and showing superior performance to other
discretization methods proposed in the literature.

\section{Preliminaries}\label{sec:preliminaries}
This section presents basic notation and preliminaries.

\subsection{Notation}\label{assumptions}
We denote by $\real$ and $\realpos$ the sets of real and positive
real numbers, resp.  All vectors are column vectors.  We denote their
scalar product by $\langle \cdot,\cdot\rangle$.  We use $\norm{\cdot{
  }}$ to denote the $2$-norm in Euclidean space. Given $\mu \in
\realpos$, 
a continuously differentiable function $f$ is $\mu$-strongly convex if
$f(y) - f(x) \geq\langle \nabla f(x), y - x\rangle +
\frac{\mu}{2}\norm{x - y}^2$ for $x$, $y \in \real^n$.  Given $L \in
\realpos$ and a function $f:X \rightarrow Y$ between two normed spaces
$(X,\norm{\cdot{}}_X)$ and ($Y,\norm{\cdot{}}_Y$), $f$ is
$L$-Lipschitz if $\norm{f(x) - f(x')}_{Y}\leq L\norm{x - x'}_{X}$ for
$x$, $x' \in X$. 
The functions we consider here are continuously differentiable,
$\mu$-strongly convex and have $L$-Lipschitz continuous gradient.  We
refer to the set of functions with all these properties
by~$\mathcal{S}_{\mu,L}^1(\real^n)$.  A function
$f:\real^n\rightarrow\real$ is positive definite relative to $x_*$ if
$f(x_*)=0$ and $f(x)>0$ for $x\in\real^n\setminus\{x_*\}$.

\subsection{Resource-Aware Control}\label{sec:resource-aware}

Our work builds on ideas from resource-aware control to develop
discretizations of continuous-time accelerated flows. Here, we provide
a brief exposition of its basic elements and refer
to~\cite{WPMHH-KHJ-PT:12,CN-EG-JC:19-auto} for further details.

Given a controlled dynamical system $\dot{p} = X(p,u)$, with $p \in
\real^n$ and $u \in \real^m$, assume we are given a stabilizing
continuous state-feedback $\map{\feedback}{\real^n}{\real^m}$ so that the
closed-loop system $\dot{p} = X(p,\feedback(p))$ has $p_*$ as a globally
asymptotically stable equilibrium point. Assume also that a Lyapunov
function $\map{V}{\real^n}{\real}$ is available as a certificate of
the globally stabilizing nature of the controller. Here, we assume
this takes the form
\begin{equation}\label{eq:lyapunov_decay}
  \dot{V}  =  \langle \nabla V(p) X(p,\feedback(p)) \rangle  \leq - \frac{\sqrt{\mu}}{4} V (p), 
\end{equation}
for all $p \in \real^n$.  Although exponential decay of $V$ along the
system trajectories is not necessary, we restrict our attention to
this case as it arises naturally in our treatment.

Suppose we are given the task of implementing the controller signal
over a digital platform, meaning that the actuator cannot be
continuously updated as prescribed by the specification
$u=\feedback(p)$. In such case, one is forced to discretize the
control action along the execution of the dynamics, while making sure
that stability is still preserved.  A simple-to-implement approach is
to update the control action \emph{periodically}, i.e., fix $h >0$,
sample the state as $\{p(kh)\}_{k=0}^\infty $ and implement
\begin{align*}
  \dot{p}(t) = X(p(t),\feedback(p(kh))) , \quad t \in [kh,(k +1)h].
\end{align*}
This approach requires $h$ to be small enough to ensure that $V$
remains a Lyapunov function and, consequently, the system remains
stable.  By contrast, in \emph{resource-aware control}, one employs
the information generated by the system along its trajectory to update
the control action in an opportunistic fashion.  Specifically, we seek
to determine in a state-dependent fashion a sequence of times
$\{t_k\}_{k=0}^\infty$, not necessarily uniformly spaced, such that
$p_*$ remains a globally asymptotically stable equilibrium for the
system
\begin{align}\label{eq:triggered-design}
  \dot{p}(t) &= X(p(t),\feedback(p(t_k))) , \quad t \in
  [t_k,t_{k+1}] .
\end{align}
The main idea to accomplish this is to let the state sampling be
guided by the principle of maintaining the same type of exponential
decay~\eqref{eq:lyapunov_decay} along the new dynamics.  To do this,
one defines triggers to ensure that this decay is never violated by
prescribing a new state sampling. Formally, one sets $t_0 = 0$ and $
t_{k + 1} = t_k + \step (p(t_k))$, where the stepsize is defined by
\begin{align}\label{eq:step-generic}
  \step (\hat{p}) &= \min \setdef{t > 0}{b(\hat {p},t) = 0}.
\end{align}
We refer to the criteria as \emph{event-triggering} or
\emph{self-triggering} depending on whether the evaluation of the
function $b$ requires monitoring of the state $p$ along the trajectory
of~\eqref{eq:triggered-design} (ET) or just knowledge of its initial
condition $\hat{p}$ (ST). The more stringent requirements to implement
event-triggering lead to larger stepsizes versus the more conservative
ones characteristic of self-triggering.  In order for the state
sampling to be implementable in practice, the inter-event times
$\{t_{k + 1} - t_k\}_{k=0}^\infty$ must be uniformly lower bounded by
a positive minimum inter-event time, abbreviated MIET. In particular,
the existence of a MIET rules out the existence of Zeno behavior,
i.e., the possibility of an infinite number of triggers in a finite
amount of time.

Depending on how the evolution of the function $V$ is examined, we
describe two types of triggering conditions\footnote{In both cases,
  for a given $z \in \real^n$, we let $p(t;\hat{p})$ denote the
  solution of $\dot{p}(t) = X(p(t),\feedback(\hat{p}))$ with initial condition
  $p(0) = \hat{p}$.}:
\begin{LaTeXdescription}
\item[Derivative-based trigger:] In this case, $b^\db$ is defined as
  an upper bound of the expression $\frac{d}{dt} V(p(t;\hat{p})) +
  \frac{\sqrt{\mu}}{4} V(p(t;\hat{p}))$.  This definition ensures
  that~\eqref{eq:lyapunov_decay} is maintained
  along~\eqref{eq:triggered-design};

\item[Performance-based trigger:] In this case, $b^\pb$ is defined as an
  upper bound of the expression $V(p(t;\hat{p})) - e^{-\frac{\sqrt{\mu}}{4}
    t}V(\hat{p})$. Note that this definition ensures that the integral
  version of~\eqref{eq:lyapunov_decay} is maintained
  along~\eqref{eq:triggered-design}.
\end{LaTeXdescription}
In general, the performance-based trigger gives rise to stepsizes that
are at least as large as the ones determined by the derivative-based
approach, cf.~\cite{PO-JC:18-cdc}. This is because the latter
prescribes an update as soon as the exponential decay is about to be
violated, and therefore, does not take into account the fact that the
Lyapunov function might have been decreasing at a faster rate since
the last update.  Instead, the performance-based approach reasons over
the \emph{accumulated decay} of the Lyapunov function since the last
update, potentially yielding longer inter-sampling times.


A final point worth mentioning is that, in the event-triggered control
literature, the notion of \emph{resource} to be aware of can be many
different things, beyond the actuator described above, including the
sensor, sensor-controller communication, communication with other
agents, etc. This richness opens the way to explore more elaborate
uses of the sampled information beyond the zero-order hold
in~\eqref{eq:triggered-design}, something that we also leverage later
in our presentation.

\section{Problem Statement}\label{se:problem-statement}

Our motivation here is to show that principled approaches to
discretization can retain the accelerated convergence properties of
continuous-time dynamics, fill the gap between the continuous and
discrete viewpoints on optimization algorithms, and lead to the
construction of new ones. Throughout the paper, we focus on the
continuous-time version of the celebrated heavy-ball
method~\cite{BTP:64}.  Let $f$ be a function in
$\mathcal{S}_{\mu,L}^1(\real^n)$ and let $x_*$ be its unique
minimizer.  The heavy-ball method is known to have an optimal
convergence rate in a neighborhood of the minimizer. For its
continuous-time counterpart, consider the following $s$-dependent
family of second-order differential equations, with $s \in \realpos$,
proposed in~\cite{BS-SSD-MIJ-WJS:18-arxiv},
\begin{subequations}\label{eq:continuous-hb-dynamics}
  \begin{align}
    \begin{bmatrix}
      \dot{x}  \\
      \dot{v}
    \end{bmatrix}
    & =
    \begin{bmatrix}
      v
      \\
      - 2\sqrt{\mu}v - (1+\sqrt{\mu s})\nabla f(x))
    \end{bmatrix},
    \\
    x(0) & =x_0, \quad  v(0)=-\frac{2\sqrt{s}\nabla
      f(x_0)}{1+\sqrt{\mu s}} .
    \label{eq:initial-state}
  \end{align}
\end{subequations}
We refer to this dynamics as~$\Xhb$. The following result
characterizes the convergence properties
of~\eqref{eq:continuous-hb-dynamics} to $p_*=[x_*, 0]^T$.

\begin{theorem}[\cite{BS-SSD-MIJ-WJS:18-arxiv}]\label{th:hb}
  Let $\map{V}{\real^n \times \real^n}{\real}$ be
  \begin{align}\label{eq:continuous-hb-lyapunov}
    V(x,v) & = (1+\sqrt{\mu s})(f(x) - f(x_*)) +
    \displaystyle\frac{1}{4}\norm{v}^2 \nonumber
    \\
    & \quad + \displaystyle\frac{1}{4}\norm{v +2\sqrt{\mu}(x-x_*)}^2,
  \end{align}
  which is positive definite relative to $[x_*, 0]^T$. Then $\dot{V}
  \leq-\frac{\sqrt{\mu}}{4}V$ along the
  dynamics~\eqref{eq:continuous-hb-dynamics} and, as a consequence,
  $p_*=[x_*, 0]^T$ is globally asymptotically stable.  Moreover, for
  $s\leq 1/L$, the exponential decrease of $V$ implies
  \begin{equation}\label{eq:decay_fun}
    f(x(t))-f(x_*)\leq
    \frac{7\norm{x(0) -
        x_*}^2}{2s}e^{-\frac{\sqrt{\mu}}{4}t} .
  \end{equation}
\end{theorem}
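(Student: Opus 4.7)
The plan is to verify positive definiteness of $V$, then compute $\dot V$ along the flow, apply strong convexity to get a clean upper bound, and close the Lyapunov inequality via a Young-type bound on a cross term. The exponential rate on $f(x)-f(x_\ast)$ follows by integration plus a careful estimate of $V(0)$ under the prescribed initial condition and the hypothesis $s\le 1/L$.

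First I would check the positive-definiteness claim: evaluating at $(x_\ast,0)$ yields $V=0$, while for $(x,v)\ne(x_\ast,0)$ strong convexity gives $f(x)-f(x_\ast)\ge \frac{\mu}{2}\|x-x_\ast\|^2\ge 0$, and the two quadratic terms are nonnegative; they cannot all vanish simultaneously unless $v=0$ and $x=x_\ast$.

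Next I would differentiate $V$ along~\eqref{eq:continuous-hb-dynamics} using the chain rule. The key simplifying observation is that
\begin{equation*}
\dot v + 2\sqrt{\mu}\,\dot x = -2\sqrt{\mu}\,v -(1+\sqrt{\mu s})\nabla f(x) + 2\sqrt{\mu}\,v = -(1+\sqrt{\mu s})\nabla f(x),
\end{equation*}
which makes the derivative of the third quadratic term very clean. After collecting terms, all the $\langle v,\nabla f(x)\rangle$ contributions cancel and one is left with
\begin{equation*}
\dot V = -\sqrt{\mu}\,\|v\|^2 - \sqrt{\mu}\,(1+\sqrt{\mu s})\,\langle \nabla f(x), x-x_\ast\rangle .
\end{equation*}
Applying the strong convexity inequality $\langle \nabla f(x),x-x_\ast\rangle \ge f(x)-f(x_\ast)+\frac{\mu}{2}\|x-x_\ast\|^2$ yields an upper bound for $\dot V$ in terms of the three natural quantities $\|v\|^2$, $f(x)-f(x_\ast)$, and $\|x-x_\ast\|^2$.

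To close the loop I would expand $\frac{\sqrt{\mu}}{4}V$ by opening $\|v+2\sqrt{\mu}(x-x_\ast)\|^2$, producing a cross term $\frac{\mu}{4}\langle v,x-x_\ast\rangle$. Summing with $\dot V$ and bounding this cross term via Young's inequality with weight $1/\sqrt{\mu}$, i.e.
\begin{equation*}
\frac{\mu}{4}\langle v,x-x_\ast\rangle \le \frac{\sqrt{\mu}}{8}\|v\|^2 + \frac{\mu^{3/2}}{8}\|x-x_\ast\|^2 ,
\end{equation*}
leaves the remaining coefficients of $\|v\|^2$, $f(x)-f(x_\ast)$ and $\|x-x_\ast\|^2$ manifestly nonpositive for every $s>0$. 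This establishes $\dot V\le -\frac{\sqrt{\mu}}{4}V$; global asymptotic stability of $p_\ast=[x_\ast,0]^T$ then follows from standard Lyapunov arguments, using that $V$ is radially unbounded by the $\mu$-strong convexity of $f$.

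For the final inequality~\eqref{eq:decay_fun}, integrating gives $V(x(t),v(t))\le V(x_0,v_0)\,e^{-\sqrt{\mu}t/4}$, and the defining expression of $V$ implies $f(x(t))-f(x_\ast)\le V(x(t),v(t))/(1+\sqrt{\mu s})$. The remaining task is to bound $V(x_0,v_0)$ by a multiple of $\|x_0-x_\ast\|^2$. Using $f(x_0)-f(x_\ast)\le \frac{L}{2}\|x_0-x_\ast\|^2$ from the $L$-Lipschitz gradient, the identity $\|v_0\|^2=4s\|\nabla f(x_0)\|^2/(1+\sqrt{\mu s})^2$ combined with $\|\nabla f(x_0)\|\le L\|x_0-x_\ast\|$, and the triangle-type bound $\|v_0+2\sqrt{\mu}(x_0-x_\ast)\|^2\le 2\|v_0\|^2+8\mu\|x_0-x_\ast\|^2$, the condition $s\le 1/L$ (hence $\mu s\le 1$ and $sL^2\le L$) yields $V(x_0,v_0)/(1+\sqrt{\mu s})\le \frac{7}{2s}\|x_0-x_\ast\|^2$ after collecting constants. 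I expect the main obstacles to be (i) making the correct choice of Young weight so that the signs line up in the Lyapunov step, and (ii) keeping the constants sharp enough in the $V(0)$ estimate to produce exactly the factor $7/2$; both are ultimately bookkeeping, but they are the places where a careless bound would either break the argument or degrade the stated constant.
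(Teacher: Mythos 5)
This theorem is imported from \cite{BS-SSD-MIJ-WJS:18-arxiv}; the paper gives no proof of its own, so your attempt can only be judged on its merits. Your Lyapunov argument is correct and is the standard one: the identity $\dot v+2\sqrt{\mu}\dot x=-(1+\sqrt{\mu s})\nabla f(x)$ does make the $\langle v,\nabla f(x)\rangle$ terms cancel, leaving $\dot V=-\sqrt{\mu}\|v\|^2-\sqrt{\mu}(1+\sqrt{\mu s})\langle\nabla f(x),x-x_*\rangle$, and your Young weight $1/\sqrt{\mu}$ on the cross term $\frac{\mu}{4}\langle v,x-x_*\rangle$ leaves coefficients $-\frac{3\sqrt{\mu}}{4}$ on $\|v\|^2$ and at most $-\frac{\mu^{3/2}}{8}$ on $\|x-x_*\|^2$, so $\dot V\le-\frac{\sqrt{\mu}}{4}V$ and global asymptotic stability both go through.

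The one genuine gap is in the last step: your estimates do not produce the stated constant $7/2$, they produce $11/2$. Concretely, the bound $\|v_0+2\sqrt{\mu}(x_0-x_*)\|^2\le 2\|v_0\|^2+8\mu\|x_0-x_*\|^2$ is too lossy. Following your own accounting with $s\le 1/L$ (so $L\le 1/s$, $sL^2\le 1/s$, $\mu\le 1/s$), the three terms of $V(x_0,v_0)/(1+\sqrt{\mu s})$ contribute at most $\frac{1}{2s}+\frac{1}{s}+\left(\frac{2}{s}+\frac{2}{s}\right)=\frac{11}{2s}$ times $\|x_0-x_*\|^2$, so as written you prove \eqref{eq:decay_fun} only with $11/2$ in place of $7/2$. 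The fix is to expand the square exactly rather than using the triangle-type bound: since $v_0$ is a negative multiple of $\nabla f(x_0)$, convexity gives $\langle v_0,x_0-x_*\rangle=-\frac{2\sqrt{s}}{1+\sqrt{\mu s}}\langle\nabla f(x_0),x_0-x_*\rangle\le 0$, so the cross term in $\|v_0+2\sqrt{\mu}(x_0-x_*)\|^2=\|v_0\|^2+4\sqrt{\mu}\langle v_0,x_0-x_*\rangle+4\mu\|x_0-x_*\|^2$ can simply be discarded. The third term then contributes at most $\frac{1}{4}\|v_0\|^2+\mu\|x_0-x_*\|^2\le\frac{2}{s}\|x_0-x_*\|^2$, and the total becomes $\frac{1}{2s}+\frac{1}{s}+\frac{2}{s}=\frac{7}{2s}$, exactly as claimed. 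So the idea is right, but the specific inequality you chose discards precisely the sign information needed for the stated constant.
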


This result, along with analogous
results~\cite{BS-SSD-MIJ-WJS:18-arxiv} for the Nesterov's accelerated
gradient descent, serves as an inspiration to build Lyapunov
functions that help to explain the accelerated convergence rate of the
discrete-time methods.



Inspired by the success of resource-aware control in developing
efficient closed-loop feedback implementations on digital systems,
here we present a discretization approach to accelerated optimization
flows using resource-aware control.  At the basis of the approach
taken here is the observation that the convergence
rate~\eqref{eq:decay_fun} of the continuous flow is a direct
consequence of the Lyapunov nature of the
function~\eqref{eq:continuous-hb-lyapunov}. In fact, the integration
of $\dot{V} \leq-\frac{\sqrt{\mu}}{4}V$ along the system trajectories
yields
\begin{equation*}
  V(x(t),v(t)) \leq
  e^{-\frac{\sqrt{\mu}}{4}t} V(x(0),v(0)).
\end{equation*} 
Since $ f(x(t)) - f(x_*) \leq V(x(t),v(t))$, we deduce
\begin{align*}
  f(x(t)) - f(x_*) \leq e^{-\frac{\sqrt{\mu}}{4}t} V(x(0),v(0)) =
  \mathcal{O}(e^{-\frac{\sqrt{\mu}}{4}t}) .
\end{align*}
The characterization of the convergence rate via the decay of the
Lyapunov function is indeed common among accelerated optimization
flows.  This observation motivates the resource-aware approach to
discretization pursued here, where the resource that we aim to use
efficiently is the sampling of the state itself. By doing so, the
ultimate goal is to give rise to large stepsizes that take maximum
advantage of the decay of the Lyapunov function (and consequently of
the accelerated nature) of the continuous-time dynamics in the
resulting discrete-time implementation.


\section{Resource-Aware Discretization of Accelerated Optimization
  Flows}\label{sec:performance-based}

In this section we propose a discretization of accelerated
optimization flows using state-dependent triggering and analyze the
properties of the resulting discrete-time algorithm.  For convenience,
we use the shorthand notation $p = [x,v]^T$.  In following with the
exposition in Section~\ref{sec:resource-aware}, we start by
considering the zero-order hold implementation $\dot p = \Xhb
(\hat{p})$, $p(0) = \hat{p}$ of the heavy-ball
dynamics~\eqref{eq:continuous-hb-dynamics},
\begin{subequations}\label{eq:forward-euler}
  \begin{align}
    \dot{x} & = \hat{v},
    \\
    \dot{v} & = -2\sqrt{\mu}\hat{v} - (1 + \sqrt{\mu s}) \nabla
    f(\hat{x}).
  \end{align}  
\end{subequations}
Note that the solution trajectory takes the form $p(t) = \hat{p} + t
\Xhb (\hat p)$, which in discrete-time terminology corresponds to a
forward-Euler discretization
of~\eqref{eq:continuous-hb-dynamics}. Component-wise, we have
\begin{align*}
  x(t) & = \hat{x} + t\hat{v},
  \\
  v(t) & =\hat{v} -t \big( 2\sqrt{\mu}\hat{v} + (1 + \sqrt{\mu s})
  \nabla f(\hat{x}) \big).
\end{align*}

As we pointed out in Section~\ref{sec:resource-aware}, the use of
sampled information opens the way to more elaborated constructions than
the zero-order hold in~\eqref{eq:forward-euler}. As an example, given
the second-order nature of the heavy-ball dynamics, it would seem
reasonable to leverage the (position, velocity) nature of the pair
$(\hat{x},\hat{v})$ (meaning that, at position $\hat{x}$, the system
is moving with velocity $\hat{v}$) by employing the modified zero-order
hold
\begin{subequations}\label{eq:forward-euler-a}
  \begin{align}
    \dot{x} & = \hat{v},
    \\
    \dot{v} & = -2\sqrt{\mu}\hat{v} - (1 + \sqrt{\mu s}) \nabla
    f(\hat{x} + a \hat{v}),
  \end{align}  
\end{subequations}
where $a \ge 0$. Note that the trajectory
of~\eqref{eq:forward-euler-a} corresponds to the forward-Euler
discretization of the continuous-time dynamics
\begin{align}\label{eq:continuous-hb-dynamics_a}
  \begin{bmatrix}
    \dot{x}  \\
    \dot{v}
  \end{bmatrix}
  & =
  \begin{bmatrix}
    v
    \\
    - 2\sqrt{\mu}v - (1+\sqrt{\mu s})\nabla f(x + av))
  \end{bmatrix},
\end{align}
We refer to this as the {\it heavy-ball dynamics with displaced
  gradient} and denote it by~$\Xhba$ (note
that~\eqref{eq:forward-euler-a}
and~\eqref{eq:continuous-hb-dynamics_a} with $a=0$
recover~\eqref{eq:forward-euler}
and~\eqref{eq:continuous-hb-dynamics}, respectively). In order to
pursue the resource-aware approach laid out in
Section~\ref{sec:resource-aware} with the modified zero-order hold
in~\eqref{eq:forward-euler-a}, we need to characterize the asymptotic
convergence properties of the heavy-ball dynamics with displaced
gradient, which we tackle next.

\begin{remark}\longthmtitle{Connection between the use of sampled
    information and high-resolution-ODEs} {\rm A number of
    works~\cite{ML-AMD:19,MM-MIJ:19,IS-JM-GD-GH:13} have explored
    formulations of Nesterov's accelerated that employ
    displaced-gradient-like terms similar to the one used above. Here,
    we make this connection explicit. Given Nesterov's algorithm
    \begin{align*}
      y_{k+1} & = x_k -s \nabla f(x_k)
      \\
      x_{k+1} &= y_{k+1} + \displaystyle\frac{1-\sqrt{\mu s}}{1 +
        \sqrt{\mu s}}(y_{k+1} - y_k)
    \end{align*}
    the work~\cite{BS-SSD-MIJ-WJS:18-arxiv} obtains the following
    limiting high-resolution ODE
    \begin{equation}\label{hr-ODE}
      \ddot{x} + 2\sqrt{\mu}\dot{x} +
      \sqrt{s}\nabla^2 f(x)\dot{x} + (1+\sqrt{\mu s})\nabla f(x) = 0.
    \end{equation}
    Interestingly, considering instead the evolution of the $y$-variable
  and applying similar arguments to the ones
  in~\cite{BS-SSD-MIJ-WJS:18-arxiv}, one instead obtains
  \begin{equation}\label{hr-ODE_new}
    \ddot{y} + 2\sqrt{\mu}\dot{y} + (1+\sqrt{\mu s})\nabla f \big(y +
    \frac{\sqrt{s}}{1 +\sqrt{\mu s}}\dot{y}\big) = 0 , 
  \end{equation}
  which corresponds to the continuous heavy-ball dynamics
  in~\eqref{eq:continuous-hb-dynamics} evaluated with a displaced
  gradient, i.e.,~\eqref{eq:continuous-hb-dynamics_a}. Even further,
  if we Taylor expand the last term in~\eqref{hr-ODE_new} as
  \[
  \nabla f(y + \displaystyle\frac{\sqrt{s}}{1 +\sqrt{\mu s}}\dot{y}) =
  \nabla f(y) + \nabla^2 f(y) \displaystyle\frac{\sqrt{s}}{1
    +\sqrt{\mu s}}\dot{y} + \mathcal{O}(s)
  \]
  and disregard the $\mathcal{O}(s)$ term, we recover~\eqref{hr-ODE}.
  This shows that~\eqref{hr-ODE_new} is just~\eqref{hr-ODE} with extra
  higher-order terms in $s$, and provides evidence of the role of
  gradient displacement in enlarging the modeling capabilities of
  high-resolution ODEs.}  \oprocend
\end{remark}

\subsection{Asymptotic Convergence of Heavy-Ball Dynamics with
  Displaced Gradient}\label{sec:hbdg-asymptotic}

In this section, we study the asymptotic convergence of heavy-ball
dynamics with displaced gradient.  Interestingly, for $a$ sufficiently
small, this dynamics enjoys the same convergence properties as the
dynamics~\eqref{eq:continuous-hb-dynamics}, as the following result
shows.

\begin{theorem}\longthmtitle{Global asymptotic stability  of
    heavy-ball dynamics with
    displaced gradient}\label{th:continuous-sampled}
  Let $\beta_1,\dots,\beta_4>0$ be
  \begin{align*}
    \beta_1 &= \oneplusmus\mu, \quad \beta_2 =
    \displaystyle\frac{\oneplusmus L}{\sqrt{\mu}},
    \\
    \beta_3 &= \frac{13\sqrt{\mu}}{16}, \quad \beta_4 = \frac{4
      \mu^{2}\sqrt{s} + 3 L \sqrt{\mu} \oneplusmus }{8 L^2},
  \end{align*}
  where, for brevity, $\oneplusmus = 1 + \sqrt{\mu s}$,
  and define
  \begin{align}\label{eq:a1}
    a^*_1 = \frac{2}{\beta_2^2} \Big( \beta_1 \beta_4 +
    \sqrt{\beta_2^2 \beta_3 \beta_4 + \beta_1^2 \beta_4^2} \Big) .
  \end{align}
  Then, for $0 \leq a \leq a^*_1$, $\dot{V}
  \leq-\frac{\sqrt{\mu}}{4}V$ along the
  dynamics~\eqref{eq:continuous-hb-dynamics_a} and, as a consequence,
  $p_*=[x_*, 0]^T$ is globally asymptotically stable. Moreover, for
  $s\leq 1/L$, the exponential decrease of $V$
  implies~\eqref{eq:decay_fun} holds along the trajectories
  of~$\Xhba$.
\end{theorem}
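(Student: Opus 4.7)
The plan is to compare the Lyapunov derivative along $\Xhba$ with the one along $\Xhb$ (which is already controlled by Theorem~\ref{th:hb}) and show that the extra contribution coming from the gradient displacement can be absorbed by the available slack, provided $a$ is not too large. To this end, I will compute $\dot V$ along the dynamics~\eqref{eq:continuous-hb-dynamics_a} and write
\[
\nabla f(x + av) = \nabla f(x) + g(x,v), \quad \text{with } g(x,v) := \nabla f(x + av) - \nabla f(x),
\]
so that
\[
\dot V \bigr|_{\Xhba} \;=\; \dot V \bigr|_{\Xhb} \;-\; (1+\sqrt{\mu s})\bigl(\langle v, g(x,v)\rangle + \sqrt{\mu}\langle x - x_*, g(x,v)\rangle\bigr) .
\]
The first term on the right is already dominated by $-\tfrac{\sqrt{\mu}}{4}V$ by Theorem~\ref{th:hb}; crucially, the proof of that theorem produces a \emph{residual}, i.e.\ an inequality of the form $\dot V|_{\Xhb} + \tfrac{\sqrt{\mu}}{4} V \leq -R(x,v)$, where $R$ is a nonnegative quadratic form in $\|v\|$ and $\|x-x_*\|$ (the relevant positive multiples of $\|v\|^2$ and $f(x)-f(x_*)\ge \tfrac{\mu}{2}\|x-x_*\|^2$). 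I would revisit that computation to identify the coefficients $\beta_3$ and $\beta_4$ as exactly the coefficients of $\|v\|^2$ and $\|x-x_*\|^2$ in $R$.

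Next I would bound the error term. Since $\nabla f$ is $L$-Lipschitz, $\|g(x,v)\| \leq L a \|v\|$. Applying Cauchy--Schwarz,
\[
(1+\sqrt{\mu s})\bigl|\langle v,g\rangle + \sqrt{\mu}\langle x - x_*, g\rangle\bigr| \;\leq\; (1+\sqrt{\mu s}) L a \|v\|^2 + (1+\sqrt{\mu s}) \sqrt{\mu}\, L a \|v\|\|x-x_*\| .
\]
The mixed term is then split by Young's inequality, $2\|v\|\|x-x_*\| \le \alpha \|v\|^2 + \alpha^{-1}\|x-x_*\|^2$, with a weight $\alpha$ tuned so that the coefficients identify cleanly with $\beta_1 = \oneplusmus\mu$ and $\beta_2 = \oneplusmus L/\sqrt{\mu}$ after the bookkeeping. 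Collecting, the combined inequality $\dot V + \tfrac{\sqrt{\mu}}{4}V \le 0$ reduces to a pointwise quadratic condition in $a$ of the form
\[
\beta_2^2\, a^2 \;-\; 4\beta_1 \beta_4\, a \;-\; 4\beta_3 \beta_4 \;\le\; 0,
\]
whose positive root is precisely $a_1^*$ in~\eqref{eq:a1}. Hence $0 \le a \le a_1^*$ guarantees $\dot V \le -\tfrac{\sqrt{\mu}}{4}V$ along~$\Xhba$.

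Once this differential inequality is in hand, the rest of the statement is routine: global asymptotic stability of $p_*=[x_*,0]^T$ follows from the positive-definiteness of $V$ relative to $p_*$ and standard Lyapunov arguments, and the bound~\eqref{eq:decay_fun} follows verbatim from the $s\le 1/L$ portion of the proof of Theorem~\ref{th:hb} (which only uses the exponential decay of $V$ and the same initial-condition estimate on $V(x_0,v_0)$ in terms of $\|x_0-x_*\|^2$), since neither the form of $V$ nor the initial condition~\eqref{eq:initial-state} depend on $a$.

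The main obstacle is the careful bookkeeping in step two: the residual $R(x,v)$ from the $a=0$ analysis must be extracted with explicit constants rather than just invoked qualitatively, since the ultimate threshold $a_1^*$ depends on matching the coefficients of $\|v\|^2$ and $\|x-x_*\|^2$ exactly to $\beta_3$ and $\beta_4$. A secondary care point is choosing the Young weight $\alpha$ optimally so that the resulting quadratic in $a$ has the clean discriminant $\beta_2^2\beta_3\beta_4 + \beta_1^2\beta_4^2$ appearing in~\eqref{eq:a1}; a suboptimal choice would yield a more conservative (smaller) threshold and would not recover the stated expression for $a_1^*$.
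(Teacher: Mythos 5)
Your overall strategy mirrors the paper's: split $\dot V$ along $\Xhba$ into the $a=0$ part (Term I, controlled by an explicit negative-definite quadratic residual) plus the displacement error $-\oneplusmus\big(\langle v,g\rangle + \sqrt{\mu}\langle x-x_*,g\rangle\big)$, and reduce the claim to a quadratic condition on $a$. Your Young-inequality route for the mixed term is a legitimate alternative to the paper's Lemma~\ref{lemma:bound} (which instead minimizes $g(z)$ over the ratio $z=\norm{\nabla f(x)}/\norm{v}$), and with the weight chosen to saturate the $\beta_4$ constraint it does reproduce the discriminant in~\eqref{eq:a1}. However, there is a genuine gap in how you bound the first piece of the error.

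You estimate $\oneplusmus|\langle v,g\rangle| \le \oneplusmus L a\norm{v}^2$ by Cauchy--Schwarz and Lipschitzness, which is a \emph{harmful} positive contribution. The paper instead exploits strong convexity (monotonicity of $\nabla f$): $\langle \nabla f(x+av)-\nabla f(x), av\rangle \ge \mu a^2\norm{v}^2$, hence $-\oneplusmus\langle \nabla f(x+av)-\nabla f(x), v\rangle \le -a\oneplusmus\mu\norm{v}^2 = -a\beta_1\norm{v}^2$, a \emph{helpful} negative term. This sign is essential: $\beta_1=\oneplusmus\mu$ enters $a_1^*$ positively (it enlarges the threshold), which can only happen if the corresponding term in the derivative bound is negative. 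With your bound, the quadratic in $a$ would read $\beta_2^2 a^2 + 4\oneplusmus L\beta_4\, a - 4\beta_3\beta_4\le 0$ rather than $\beta_2^2 a^2 - 4\beta_1\beta_4\, a - 4\beta_3\beta_4\le 0$, and no choice of Young weight can repair the sign of the linear term; you would obtain a strictly smaller threshold that does not match~\eqref{eq:a1}. A secondary mismatch: the paper's residual for Term~I is a quadratic form in $(\norm{v},\norm{\nabla f(x)})$, not $(\norm{v},\norm{x-x_*})$ --- note the $L^2$ in the denominator of $\beta_4$ --- and the cross term is converted via $\norm{x-x_*}\le \norm{\nabla f(x)}/\mu$, which is where the $1/\sqrt{\mu}$ in $\beta_2$ comes from; bookkeeping in terms of $\norm{x-x_*}^2$ would not land on the stated constants.
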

\begin{proof}
  Note that
  \begin{align*}
    & \langle \nabla V(p), X^a_{\operatorname{hb}}(p) \rangle +
    \frac{\sqrt{\mu}}{4} V(p) =
    \\
    & = (1 \!+\! \sqrt{\mu s})\langle \nabla f(x),v \rangle \!-\!
    \sqrt{\mu}\norm{v}^2 \!-\! \oneplusmus\langle \nabla f(x \!+\! a
    v),v \rangle
    \\
    & \quad - \sqrt{\mu}\oneplusmus\langle \nabla f(x + av),x - x_*
    \rangle + \frac{\sqrt{\mu}}{4} V(x,v)
    \\
    & = \underbrace{-\sqrt{\mu}\norm{v}^2 - \sqrt{\mu}\oneplusmus\langle \nabla f(x), x - x_* \rangle +\frac{\sqrt{\mu}}{4}
      V(x,v)}_{\textrm{Term~I}}
    \\
    & \quad \underbrace{-\oneplusmus\langle \nabla f(x + av) -
      \nabla f(x),v \rangle}_{\textrm{Term~II}}
    \\
    & \quad \underbrace{ -\sqrt{\mu}\oneplusmus\langle \nabla
      f(x + av) - \nabla f(x),x - x_*\rangle}_{\textrm{Term~III}},
  \end{align*}
  where in the second equality, we have added and subtracted
  $\sqrt{\mu} \oneplusmus\langle \nabla f(x ),x - x_*
  \rangle$. Observe that ``Term~I'' corresponds to $\langle \nabla
  V(p),X_{\operatorname{hb}}(p)\rangle + \frac{\sqrt{\mu}}{4}V(p)$ and
  is therefore negative by
  Theorem~\ref{th:hb}. From~\cite{MV-JC:19-nips}, this term can be
  bounded as
  \begin{align*}
    \textrm{Term~I} & \leq \frac{-13\sqrt{\mu}}{16} \norm{v}^2
    \\
    & \quad+ \Big(\frac{4 \mu^{2}\sqrt{s}+3 L \sqrt{\mu}\oneplusmus}{8
      L^2} \Big)\norm{\nabla f(x)}^2.
  \end{align*}
  Let us study the other two terms.  By strong convexity, we have $ -
  \langle \nabla f(x + av) - \nabla f(x), v \rangle \leq - a \mu
  \norm{v}^2$, and therefore
  \begin{align*}
    \textrm{Term~II} & \leq -a\oneplusmus\mu \norm{v}^2 \le 0 .
  \end{align*}
  Regarding Term~III, one can use the $L$-Lipschitzness of $\nabla f$
  and strong convexity to obtain
  \begin{align*}
    \textrm{Term~III} & \leq \displaystyle\frac{a}{\mu}\sqrt{\mu} \oneplusmus L\norm{v}\norm{\nabla f(x)}.
  \end{align*}
  %
  %
  Now, using the notation in the statement, we can write
  \begin{align}\label{eq:a_condition}
    & \langle \nabla V(p), X^a_{\operatorname{hb}}(p) \rangle +
    \frac{\sqrt{\mu}}{4} V(p)
    \\
    & \leq a \big(\!-\!\beta_1\norm{v}^2 + \beta_2\norm{v}\norm{\nabla
      f(x)} \big) \!-\! \beta_3\norm{v}^2 \!-\!\beta_4\norm{\nabla
      f(x)}^2 . \notag
  \end{align}
  If $-\beta_1\norm{v}^2 + \beta_2\norm{v}\norm{\nabla f(x)} \leq 0$,
  then the RHS of~\eqref{eq:a_condition} is negative for any $a \geq
  0$. If $-\beta_1\norm{v}^2 + \beta_2\norm{v}\norm{\nabla f(x)} > 0$,
  the RHS of~\eqref{eq:a_condition} is negative if and only if 
  \[
  a \leq \displaystyle\frac{\beta_3\norm{v}^2 + \beta_4\norm{\nabla
      f(x)}^2}{-\beta_1\norm{v}^2 + \beta_2\norm{v}\norm{\nabla
      f(x)}}. 
  \]
  The RHS of this equation corresponds to $g({\norm{\nabla
      f(x)}}/{\norm{\nabla v}})$, with the function $g$ defined
  in~\eqref{eq:g}. From Lemma~\ref{lemma:bound}, as long as
  $-\beta_1\norm{v}^2 + \beta_2\norm{v}\norm{\nabla f(x)} > 0$, this
  function is lower bounded by
  \begin{align*}
    a_1^* = \frac{\beta_3 + \beta_4(z_{\root}^+)^2}{-\beta_1 + \beta_2
      z_{\root}^+} >0 ,
  \end{align*}
  where $z_{\root}^+$ is defined in~\eqref{eq:zroot}. This exactly
  corresponds to~\eqref{eq:a1}, concluding the
  result.
\end{proof}

\begin{remark}\longthmtitle{Adaptive displacement along the
    trajectories of heavy-ball dynamics with displaced
    gradient}\label{rem:a-over-region}
  {\rm From the proof of Theorem~\ref{th:continuous-sampled}, one can
    observe that if $(x,v)$ is such that $\underline{n} \leq
    \norm{\nabla f(x)} < \overline{n}$ and $\underline{m} \leq
    \norm{v} < \overline{m}$, for $\underline{n}, \overline{n},
    \underline{m}, \overline{m} \in \realpos$, then one can upper
    bound the LHS of~\eqref{eq:a_condition}~by
    \[
    a (-\beta_1 \underline{m}^2 + \beta_2 \overline{m} \, \overline{n}) -
    \beta_3 \underline{m}^2 - \beta_4 \underline{n}^2.
    \]
    If $-\beta_1 \underline{m}^2 + \beta_2 \overline{m} \,
    \overline{n} \le 0$, any $a \ge 0$ makes this expression negative.
    If instead $ -\beta_1 \underline{m}^2 + \beta_2 \overline{m} \,
    \overline{n} \geq 0$, then $a$ must satisfy
    \begin{align}\label{eq:adaptive-a}
      a \leq \Big| \frac{\beta_3 \underline{m}^2 + \beta_4
        \underline{n}^2}{-\beta_1 \underline{m}^2 + \beta_2
        \overline{m} \, \overline{n}} \Big| .
    \end{align}
    This argument shows that over the region $R =
    \setdef{(x,v)}{\underline{n} \leq \norm{\nabla f(x)} <
      \overline{n} \text{ and } \underline{m} \leq \norm{v} <
      \overline{m}}$, any $a \ge 0$ satisfying~\eqref{eq:adaptive-a}
    ensures that $\dot V \le -\frac{\sqrt{\mu}}{4} V$, and hence the
    desired exponential decrease of the Lyapunov function. This
    observation opens the way to modify the value of the parameter $a$
    adaptively along the execution of the heavy-ball dynamics with
    displaced gradient, depending on the region of state space visited
    by its trajectories.  } \oprocend
\end{remark}

\subsection{Triggered Design of Variable-Stepsize
  Algorithms}\label{sec:trigger-design}


In this section we propose a discretization of the continuous
heavy-ball dynamics based on resource-aware control. To do so, we
employ the approaches to trigger design described in
Section~\ref{sec:resource-aware} on the dynamics~$\Xhba$, whose
forward-Euler discretization corresponds to the modified zero-order
hold~\eqref{eq:forward-euler-a} of the heavy-ball dynamics.

Our starting point is the characterization of the asymptotic
convergence properties of~$\Xhba$ developed in
Section~\ref{sec:hbdg-asymptotic}.  The trigger design necessitates of
bounding the evolution of the Lyapunov function $V$
in~\eqref{eq:continuous-hb-lyapunov} for the continuous-time
heavy-ball dynamics with displaced gradient along its zero-order hold
implementation. However, this task presents the challenge that the
definition of $V$ involves the minimizer $x_*$ of the optimization
problem itself, which is unknown (in fact, finding it is the ultimate
objective of the discrete-time algorithm we seek to design). In order
to synthesize computable triggers, this raises the issue of bounding
the evolution of $V$ as accurately as possible while avoiding any
requirement on the knowledge of~$x_*$.  The following result, whose
proof is presented in Appendix~\ref{app:appendix},
addresses this point.

\begin{proposition}\longthmtitle{Upper bound for derivative-based
    triggering with zero-order hold}\label{prop:upper-bound-derivative}
  Let $a \ge 0$ and
  define
  \begin{align*}
    b^\db_{\ET}(\hat{p}, t;a) &= A_{\ET}(\hat{p}, t;a) +
    B_{\ET}(\hat{p}, t;a) + C_{\ET}(\hat{p};a),
    \\
    b^\db_{\ST}(\hat{p}, t;a) &= B^q_{\ST}(\hat{p};a)t^2 +
    (A_{\ST}(\hat{p};a) + B^l_{\ST}(\hat{p};a))t
    \\
    & \quad + C_{\ST}(\hat{p};a),
  \end{align*}
  where
  \begin{align*}
    & A_{\ET}(\hat{p}, t;a) = 2 \mu t \norm{\hat{v}}^2 +
    \oneplusmus\big(\langle \nabla f(\hat{x}+t \hat{v}) - \nabla
    f(\hat{x}), \hat{v} \rangle
    \\
    &\quad + 2t \sqrt{\mu} \langle \nabla f(\hat{x} + a \hat{v}),
    \hat{v} \rangle + t\oneplusmus \norm{\nabla f(\hat{x} + a
      \hat{v})}^2 \big),
    \\
    & B_{\ET}(\hat{p}, t;a) = \frac{\sqrt{\mu}t^2}{16} \norm{2
      \sqrt{\mu} \hat{v} + \oneplusmus\nabla f(\hat{x} + a \hat{v})}^2
    \\
    & \quad - \frac{t\mu}{4} \norm{\hat{v}}^2
    + \frac{\sqrt{\mu}\oneplusmus}{4}\big(
    f(\hat{x} + t \hat{v}) - f(\hat{x}) +
    \\
    &\quad  -t \langle \hat{v}, \nabla f(\hat{x}+a
    \hat{v}) \rangle
    + \frac{ t^2 \oneplusmus}{4}\norm{\nabla f(\hat{x} +
      a \hat{v})}^2
    \\
    &\quad - \frac{t \sqrt{\mu}}{L} \norm{\nabla f(\hat{x} + a
      \hat{v})}^2 
    +t\sqrt{\mu} \langle a\hat{v} , \nabla
    f(\hat{x} + a\hat{v})\rangle\big),
    \\
    & C_{\ET}(\hat{p};a) = -\frac{13\sqrt{\mu}}{16}\norm{\hat{v}}^2
    -\frac{\mu^2\sqrt{s}}{2}\displaystyle\frac{\norm{\nabla
        f(\hat{x})}^2}{L^2}
    \\
    & \quad + \oneplusmus\big(\frac{-3\sqrt{\mu}}{8L}\norm{\nabla
      f(\hat{x})}^2
    \\
    & \quad + \sqrt{\mu}(f(\hat x) - f(\hat{x} + a\hat{v})) +
    \sqrt{\mu}\norm{\nabla f(\hat{x})}\norm{a \hat{v}}
    \\
    & \quad -\frac{\mu^{3/2}}{2}\norm{a\hat{v}}^2 -\langle \nabla
    f(\hat{x} + a\hat{v}) - \nabla f (\hat{x}) , \hat{v}\rangle
    \\
    & \quad + \sqrt{\mu}\langle \nabla f(\hat{x} + a\hat{v}), a\hat{v}
    \rangle\big) ,
    \\
    & A_{\ST}(\hat{p};a) = 2 \mu \norm{\hat{v}}^2+ \oneplusmus
    \big(L\norm{\hat{v}}^2 + 2 \sqrt{\mu} \langle \nabla f(\hat{x} + a
    \hat{v}), \hat{v} \rangle
    \\
    &\quad + \oneplusmus \norm{\nabla f(\hat{x} + a \hat{v})}^2\big),
    \\
    & B^l_{\ST}(\hat{p};a) = \frac{\sqrt{\mu}}{4}\big(
    -\sqrt{\mu}\norm{\hat{v}}^2 + \oneplusmus( \langle \nabla
    f(\hat{x})- \nabla f(\hat{x} + a\hat{v}),\hat{v} \rangle
    \\
    & \quad - \frac{\sqrt{\mu}}{L}\norm{\nabla
      f(\hat{x} + a \hat{v})}^2
    + \sqrt{\mu}\langle a\hat{v} , \nabla
    f(\hat{x} + a\hat{v})\rangle)\big),
    \\
    & B^q_{\ST}(\hat{p};a) =
    \frac{\sqrt{\mu}}{16}\norm{2\sqrt{\mu}\hat{v} +\oneplusmus\nabla
      f(\hat{x} + a \hat{v})}^2
    \\
    & \quad +
    \frac{\sqrt{\mu}\oneplusmus}{4}\big(\frac{L}{2}\norm{\hat{v}}^2
    +\frac{\oneplusmus}{4}\norm{\nabla f(\hat{x} + a \hat{v})}^2\big),
    \\
    & C_{\ST }(\hat{p};a) = C_{\ET}(\hat{p};a).
  \end{align*}
  Let $t\mapsto p(t) = \hat{p} + t \Xhba(\hat{p})$ be the trajectory
  of the zero-order hold dynamics $\dot p= \Xhba (\hat{p})$, $p(0) =
  \hat{p}$.  Then, for $t\ge 0$,
  \begin{align*}
    \frac{d}{dt} V(p(t)) +\frac{\sqrt{\mu}}{4} V({p}(t))
    & \leq b^\db_{\ET}(\hat{p},t;a) \leq b^\db_{\ST}(\hat{p}, t;a) .
  \end{align*}
\end{proposition}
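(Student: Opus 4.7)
The plan is to compute $\tfrac{d}{dt}V(p(t))$ along the affine zero-order hold trajectory $p(t)=\hat p + t\,\Xhba(\hat p)$, algebraically rearrange the resulting expression to isolate all dependence on the unknown minimizer $x_*$, and then use strong convexity and $L$-smoothness of $f$ to eliminate that dependence, yielding the bound $b^\db_{\ET}$. A second round of Lipschitz-based bounding, applied only to the trajectory-dependent $f$ and $\nabla f$ evaluations surviving in $b^\db_{\ET}$, will then produce the polynomial-in-$t$ bound $b^\db_{\ST}$.

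Concretely, since $p(t)$ is affine in $t$ with constant velocity $\Xhba(\hat p)$, the chain rule gives $\tfrac{d}{dt}V(p(t))=\langle \nabla V(p(t)), \Xhba(\hat p)\rangle$. Writing $\nabla V(x,v)=\bigl(\oneplusmus\nabla f(x)+\sqrt{\mu}(v+2\sqrt{\mu}(x-x_*)),\; v+\sqrt{\mu}(x-x_*)\bigr)$ and using the key cancellation
\[
2\sqrt{\mu}\,\hat v + \bigl(-2\sqrt{\mu}\,\hat v-\oneplusmus\nabla f(\hat x+a\hat v)\bigr)=-\oneplusmus\nabla f(\hat x+a\hat v),
\]
all $x_*$-linear terms coming from $\nabla V(p(t))$ collapse into the single expression $-\sqrt{\mu}\,\oneplusmus\langle x(t)-x_*,\nabla f(\hat x+a\hat v)\rangle$. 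The only other appearances of $x_*$ are inside the terms $-\oneplusmus f(x_*)$ and $\tfrac14\|v(t)+2\sqrt{\mu}(x(t)-x_*)\|^2$ of $\tfrac{\sqrt{\mu}}{4}V(p(t))$.

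Strong convexity of $f$ at $\hat x+a\hat v$ supplies the upper bound $-\langle \nabla f(\hat x+a\hat v),x_*-\hat x-a\hat v\rangle\le f(\hat x+a\hat v)-f(x_*)-\tfrac{\mu}{2}\|x_*-\hat x-a\hat v\|^2$; the $f(x_*)$ terms so produced cancel against those in $V(p(t))$, and the remaining $\|\hat x-x_*\|$ factors are controlled by $\|\hat x-x_*\|\le \|\nabla f(\hat x)\|/\mu$, again from strong convexity. For the pure $\hat p$-part of the expression one applies the Term~I bound from the proof of Theorem~\ref{th:continuous-sampled} (as in~\cite{MV-JC:19-nips}), $-\tfrac{13\sqrt{\mu}}{16}\|\hat v\|^2-\tfrac{\mu^{2}\sqrt{s}}{2L^{2}}\|\nabla f(\hat x)\|^2-\tfrac{3\sqrt{\mu}\oneplusmus}{8L}\|\nabla f(\hat x)\|^2$, together with the Term~II/III estimates already computed there. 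Regrouping, the constant-in-$t$ contributions aggregate to $C_{\ET}(\hat p;a)$; the terms linear in $t$ arising from $\nabla V(p(t))-\nabla V(\hat p)$ give $A_{\ET}(\hat p,t;a)$; and the $t^2$ contributions from the quadratic pieces $\tfrac{\sqrt{\mu}}{16}\|v(t)\|^2$, $\tfrac{\sqrt{\mu}}{16}\|v(t)+2\sqrt{\mu}(x(t)-x_*)\|^2$, together with the trajectory-dependent piece $f(\hat x+t\hat v)-f(\hat x)$ coming from $\tfrac{\sqrt{\mu}\oneplusmus}{4}(f(x(t))-f(\hat x))$, collect into $B_{\ET}(\hat p,t;a)$.

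To pass from $b^\db_{\ET}$ to $b^\db_{\ST}$, the remaining evaluations at $\hat x+t\hat v$ are replaced using $L$-smoothness: $\langle \nabla f(\hat x+t\hat v)-\nabla f(\hat x),\hat v\rangle\le L t\|\hat v\|^{2}$ and $f(\hat x+t\hat v)\le f(\hat x)+t\langle \nabla f(\hat x),\hat v\rangle+\tfrac{L}{2}t^{2}\|\hat v\|^{2}$, plus Cauchy--Schwarz on residual inner products. Collecting by powers of $t$ yields the quadratic form $B^q_{\ST}\,t^2+(A_{\ST}+B^l_{\ST})\,t+C_{\ST}$, with $C_{\ST}=C_{\ET}$. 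The principal obstacle is not any single estimate but the algebraic bookkeeping: many cross-terms must be tracked with correct signs, the right convexity/smoothness inequality must be chosen to cancel each $x_*$-dependent piece without inflating the bound, and the Lipschitz replacements in the second stage must be applied so that the result is exactly quadratic in $t$ and the $A/B$ split matches the statement, while remaining tight enough for the triggers induced by $b^\db_{\ET}$ and $b^\db_{\ST}$ to admit a positive MIET in the theorems that follow.
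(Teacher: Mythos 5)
Your proposal is correct and follows essentially the same route as the paper: the same three-way split of $\tfrac{d}{dt}V(p(t))+\tfrac{\sqrt{\mu}}{4}V(p(t))$ into a stationary part evaluated at $\hat p$ (yielding $C_{\ET}$ via the Term~I bound from Theorem~\ref{th:continuous-sampled} and strong-convexity/co-coercivity arguments that cancel the $x_*$- and $f(x_*)$-dependence), the increment $\langle \nabla V(p(t))-\nabla V(\hat p),\Xhba(\hat p)\rangle$ (yielding $A_{\ET}$ after the $x_*$-linear terms cancel exactly as you note), and the Lyapunov increment $\tfrac{\sqrt{\mu}}{4}(V(p(t))-V(\hat p))$ (yielding $B_{\ET}$), followed by the same Lipschitz-based replacements to obtain the quadratic-in-$t$ ST bound. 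This matches the paper's proof in Appendix~\ref{app:appendix}.
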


The importance of Proposition~\ref{prop:upper-bound-derivative} stems
from the fact that the triggering conditions defined by $b^\db_\#$,
$\# \in \{\ET,\ST\}$, can be evaluated without knowledge of the
optimizer~$x_*$.  We build on this result next to establish an upper
bound for the performance-based triggering condition.

\begin{proposition}\longthmtitle{Upper bound for performance-based
    triggering with zero-order hold}\label{prop:upper-bound-performance}
  Let $a \ge 0$ and
  \begin{align*}
    b^\pb_{\#}(\hat{p},t;a) &= \int_0^te^{\frac{\sqrt{\mu}}{4}
      \zeta}b^\db_{\#}(\hat p,\zeta;a) d\zeta ,
  \end{align*}
  for $\# \in \{\ET,\ST\}$. Let $t\mapsto p(t) = \hat{p} + t
  \Xhba(\hat{p})$ be the trajectory of the zero-order hold dynamics
  $\dot p= \Xhba (\hat{p})$, $p(0) = \hat{p}$.  Then, for $t \ge 0 $,
  \begin{align*}
    V(p(t)) \!-\! e^{-\frac{\sqrt{\mu}}{4} t} V(\hat p) \!\leq\!
    e^{-\frac{\sqrt{\mu}}{4} t} b^\pb_{\ET} (\hat{p},t;a) \!\leq\!
    e^{-\frac{\sqrt{\mu}}{4} t} b^\pb_{\ST} (\hat{p},t;a).
  \end{align*}
\end{proposition}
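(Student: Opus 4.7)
The plan is to deduce this from Proposition~\ref{prop:upper-bound-derivative} via the standard integrating-factor trick for differential inequalities of the form $\dot{W}+\alpha W\le r(t)$. First I would set $W(t)=V(p(t))$ and rewrite the conclusion of Proposition~\ref{prop:upper-bound-derivative} as
\begin{equation*}
\frac{d}{dt}V(p(t))+\frac{\sqrt{\mu}}{4}V(p(t))\le b^\db_{\ET}(\hat p,t;a)\le b^\db_{\ST}(\hat p,t;a),\qquad t\ge 0.
\end{equation*}

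Next I would multiply through by the integrating factor $e^{\frac{\sqrt{\mu}}{4}t}>0$. The left-hand side becomes a perfect derivative,
\begin{equation*}
\frac{d}{dt}\Bigl(e^{\frac{\sqrt{\mu}}{4}t}V(p(t))\Bigr)\le e^{\frac{\sqrt{\mu}}{4}t}b^\db_{\ET}(\hat p,t;a),
\end{equation*}
and integrating from $0$ to $t$ (using $p(0)=\hat p$) yields
\begin{equation*}
e^{\frac{\sqrt{\mu}}{4}t}V(p(t))-V(\hat p)\le\int_0^t e^{\frac{\sqrt{\mu}}{4}\zeta}b^\db_{\ET}(\hat p,\zeta;a)\,d\zeta = b^\pb_{\ET}(\hat p,t;a).
\end{equation*}
Multiplying back by $e^{-\frac{\sqrt{\mu}}{4}t}$ produces exactly the first inequality in the claim.

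For the second inequality, I would simply observe that $b^\db_{\ET}(\hat p,\zeta;a)\le b^\db_{\ST}(\hat p,\zeta;a)$ for all $\zeta\ge 0$ by Proposition~\ref{prop:upper-bound-derivative}, and since $e^{\frac{\sqrt{\mu}}{4}\zeta}>0$, monotonicity of the integral gives $b^\pb_{\ET}(\hat p,t;a)\le b^\pb_{\ST}(\hat p,t;a)$, which after multiplication by the positive factor $e^{-\frac{\sqrt{\mu}}{4}t}$ yields the desired bound.

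There is essentially no obstacle here: all the real work is encapsulated in Proposition~\ref{prop:upper-bound-derivative}, and the present result is the standard Gr\"onwall-type passage from a derivative bound to the corresponding integral (performance) bound. The only small bookkeeping item is confirming that the integrand $\zeta\mapsto e^{\frac{\sqrt{\mu}}{4}\zeta}b^\db_{\#}(\hat p,\zeta;a)$ is integrable on $[0,t]$, which is immediate from continuity in~$\zeta$.
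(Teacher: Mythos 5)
Your proposal is correct and is essentially the paper's own argument: the paper also writes $e^{\frac{\sqrt{\mu}}{4}t}V(p(t))-V(\hat p)$ as the integral of $e^{\frac{\sqrt{\mu}}{4}\zeta}\bigl(\tfrac{d}{d\zeta}V(p(\zeta))+\tfrac{\sqrt{\mu}}{4}V(p(\zeta))\bigr)$ and then invokes Proposition~\ref{prop:upper-bound-derivative} on the integrand, which is exactly your integrating-factor computation read in the other direction. The ET $\le$ ST comparison via monotonicity of the integral also matches.
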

%
\begin{proof}
  We rewrite $ V(p(t)) - e^{-\frac{\sqrt{\mu}}{4} t} V(\hat p) =
  e^{-\frac{\sqrt{\mu}}{4} t} (e^{\frac{\sqrt{\mu}}{4} t} V(p(t)) -
  V(\hat p))$, and note that
  \begin{align*}
    & e^{\frac{\sqrt{\mu}}{4} t} V(p(t)) - V(\hat p)
    \\
    & \quad = \int_0^t \frac{d}{d\zeta} \big( e^{\frac{\sqrt{\mu}}{4} \zeta}V(p(\zeta))
    - V(\hat p)\big) d\zeta
    \\
    & \quad = \int_0^te^{\frac{\sqrt{\mu}}{4} \zeta} \Big( \frac{d}{d\zeta} V(p(\zeta)) +
    \frac{\sqrt{\mu}}{4} V(p(\zeta)\Big) d\zeta.
  \end{align*}
  Note that the integrand corresponds to the derivative-based
  criterion bounded in
  Proposition~\ref{prop:upper-bound-derivative}. Therefore, 
  \begin{align*}
    e^{\frac{\sqrt{\mu}}{4} t} V(p(t)) - V(\hat p) & \leq \int_0^t e^{\frac{\sqrt{\mu}}{4} \zeta}
    b^\db_{\ET}(\hat p,\zeta;a) d\zeta
    \\
    & = b^\pb_{\ET}(\hat{p},t;a) \leq b^\pb_{\ST}(\hat{p},t;a)
  \end{align*}
  for $t \ge 0$,  and the result follows.
\end{proof}

Propositions~\ref{prop:upper-bound-derivative}
and~\ref{prop:upper-bound-performance} provide us with the tools to
determine the stepsize according to the derivative- and
performance-based triggering criteria, respectively. For convenience,
and following the notation in~\eqref{eq:step-generic}, we define the
stepsizes
\begin{subequations}
  \begin{align}
    \dstep_{\#}(\hat{p};a) & = \min \setdef{t >
      0}{b^\db_{\#}(\hat{p},t;a) = 0} ,
    \label{eq:step_derivative}
    \\
    \pstep_{\#}(\hat{p};a) & = \min \setdef{t > 0}{b^\pb_{\#}(\hat{p},t;a) = 0} ,
    \label{eq:step_performance}
  \end{align}
\end{subequations}
for $\# \in \{\ET,\ST\}$.  Observe that, as long as $\hat{p} \neq p_*
= [x_*,0]^T$ and $0\leq a \leq a^*_1$, we have $C_{\#}(\hat{p};a) < 0$
for $\#\in\{\ST,\ET\}$ and, as a consequence, $
b^\db_{\#}(\hat{p},0;a)< 0$.  The ET/ST terminology is justified by
the following observation: in the ET case, the equation defining the
stepsize is in general implicit in~$t$. Instead, in the ST case, the
equation defining the stepsize is explicit in~$t$.
Equipped with this notation, we define the variable-stepsize algorithm
described in Algorithm~\ref{algo:DG}, which consists of following
the dynamics~\eqref{eq:forward-euler-a} until the exponential decay of
the Lyapunov function is violated as estimated by the derivative-based
($\diamond = \db$) or the performance-based ($\diamond = \pb$) triggering condition.
When this happens, the algorithm re-samples the state before continue
flowing along~\eqref{eq:forward-euler-a}.

\begin{algorithm}[h]
  \SetAlgoLined
  \textbf{Design Choices:} $\diamond \in \{\db,\pb\}$, $\# \in \{\ET,\ST\}$
  \textbf{Initialization:} Initial point ($p_0$), 
  objective function ($f$), tolerance ($\epsilon$), $a \ge 0$, $k=0$
  \\
  \While{$\norm{\nabla f(x_k)}\geq \epsilon$}{
    Compute stepsize $\Delta_k = \step^\diamond_{\#}(p_k;a)$
    \\
    Compute next iterate $p_{k+1} = p_k +\Delta_k \Xhba(p_k)$
    \\
    Set $k = k+1$ }
  \caption{Displaced-Gradient Algorithm}\label{algo:DG}
\end{algorithm}

\subsection{Convergence Analysis of Displaced-Gradient
  Algorithm}\label{sec:analysis}

Here we characterize the convergence properties of the derivative- and
performance-based implementations of the Displaced-Gradient
Algorithm. In each case, we show that algorithm is implementable
(i.e., it admits a MIET) and inherits the convergence rate from the
continuous-time dynamics.  The following result deals with the
derivative-based implementation of Algorithm~\ref{algo:DG}.

\begin{theorem}\longthmtitle{Convergence of derivative-based
    implementation of Displaced-Gradient 
    Algorithm}\label{non-zeno-hb-db}
  Let $\hat{\beta}_1,\dots,\hat{\beta}_5>0$ be
  \begin{alignat*}{2}
    \hat{\beta}_1 & = \oneplusmus(\frac{3\sqrt{\mu}}{2} + L), &
    \hat{\beta}_2 & = \sqrt{\mu}\oneplusmus\frac{3}{2},
    \\
    \hat{\beta}_3& = \frac{13\sqrt{\mu}}{16},
    & \hat{\beta}_4& = \frac{4 \mu^{2}\sqrt{s}+3 L \sqrt{\mu}
      \oneplusmus}{8 L^2},
    \\
    \hat{\beta}_5& = \oneplusmus\big(\frac{5\sqrt{\mu}L}{2} -
    \frac{\mu^{3/2}}{2}\big), &&
  \end{alignat*}
  and define
  \begin{align}\label{eq:a2}
    a^*_2 = \alpha \min \Big\{\frac{-\hat \beta_1 + \sqrt{\hat
        \beta_1^2 + 4 \hat \beta_5\hat \beta_3}}{2\hat
      \beta_5},\frac{\hat{\beta}_4}{\hat{\beta}_2} \Big\} ,
  \end{align}
  with $0<\alpha<1$.  Then, for $0 \leq a \leq a^*_2$, $\diamond =
  \db$, and $\# \in \{\ET,\ST\}$, the variable-stepsize strategy in
  Algorithm~\ref{algo:DG} has the following properties
  \begin{enumerate}
  \item[(i)] the stepsize is uniformly lower bounded by the positive
    constant $\miet(a)$, where
    \begin{equation}\label{g-definition}
      \miet(a)= -\nu + \sqrt{\nu^2 + \eta}, 
    \end{equation}
    $\eta= \min\{\eta_1,\eta_2\}$, $\nu = \max\{\nu_1,\nu_2\}$, and
    \begin{align*}
      \eta_1 &=\frac{8 a \oneplusmus \left(a (\mu -5 L)-\frac{2
            L}{\sqrt{\mu }}-3\right)+13}{2 \oneplusmus L \left(3 a^2
          \oneplusmus L+1\right)+8 \mu } ,
      \\
      \eta_2 &=-\frac{3 \oneplusmus \sqrt{\mu } L (4 a L-1)-4 \mu ^2
        \sqrt{s}}{3 \mu_s \sqrt{\mu } L^2},
      \\
      \nu_1 &=\frac{\mu \left(2 a^3 \oneplusmus L^2+a
          \oneplusmus+16\right)+8 \oneplusmus L \left(2 a^2
          \oneplusmus L+1\right)}{2 \sqrt{\mu } \left(\oneplusmus L
          \left(3 a^2 \oneplusmus L+1\right)+4 \mu \right)}
      \\
      &\quad+ \frac{\oneplusmus  (a L (8 a L+1)+4)}{
       \oneplusmus L \left(3 a^2 \oneplusmus
            L+1\right)+4 \mu},
        \\
        \nu_2 &=\frac{a \mu +8 \oneplusmus+8 \sqrt{\mu}}{3 \oneplusmus
          \sqrt{\mu}};
    \end{align*}
  \item[(ii)] $ \frac{d}{dt} V(p_{k}+t \Xhba (p_k)) \leq
    -\frac{\sqrt{\mu}}{4} V(p_k+t \Xhba (p_k))$ for all $t \in
    [0,\Delta_k]$ and $k \in \{0\} \cup \naturals$.
  \end{enumerate}
  As a consequence, $ f(x_{k+1})-f(x_*) =
  \mathcal{O}(e^{-\frac{\sqrt{\mu}}{4}\sum_{i=0}^k \Delta_i})$.
\end{theorem}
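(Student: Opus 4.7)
The plan is to attack part (i) by obtaining a uniform lower bound on $\dstep_{\ST}(\hat p; a)$, which will then immediately imply the same lower bound on $\dstep_{\ET}(\hat p; a)$. Part (ii) will then follow essentially by definition of the trigger together with Proposition~\ref{prop:upper-bound-derivative}, and the exponential decay of $f(x_{k+1}) - f(x_*)$ will be a Gr\"onwall-type integration across intervals. First I would observe that Proposition~\ref{prop:upper-bound-derivative} gives $b^\db_{\ET}(\hat p, t; a) \leq b^\db_{\ST}(\hat p, t; a)$ for all $t \geq 0$, with common initial value $C_{\ET}(\hat p; a) = C_{\ST}(\hat p; a)$. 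Assuming $a^*_2 \leq a^*_1$ (which is the sense in which the slack factor $\alpha < 1$ is introduced in~\eqref{eq:a2}), Theorem~\ref{th:continuous-sampled} guarantees this common value is strictly negative whenever $\hat p \neq p_*$. By continuity of both bounds, this gives $0 < \dstep_{\ST}(\hat p;a) \leq \dstep_{\ET}(\hat p;a)$, so that lower bounding $\dstep_{\ST}$ suffices.

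Next I would view $b^\db_{\ST}(\hat p, t; a) = B^q_{\ST}(\hat p; a) t^2 + (A_{\ST}(\hat p; a) + B^l_{\ST}(\hat p; a)) t + C_{\ST}(\hat p; a)$ as a quadratic in $t$ and apply $\mu$-strong convexity, $L$-Lipschitzness of $\nabla f$, Cauchy--Schwarz, and Young's inequality to replace each coefficient by an expression that is homogeneous of degree two in $\norm{\hat v}$ and $\norm{\nabla f(\hat x)}$. In particular, the gradient-displacement terms can be controlled via $\norm{\nabla f(\hat x + a\hat v) - \nabla f(\hat x)} \leq aL\norm{\hat v}$ and the mean-value bound $f(\hat x + t\hat v) - f(\hat x) \leq t\langle \nabla f(\hat x),\hat v\rangle + \frac{L t^2}{2}\norm{\hat v}^2$. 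The outcome should be a majorant of the form $P(t;\hat p, a) = (\rho_1 \norm{\hat v}^2 + \rho_2 \norm{\nabla f(\hat x)}^2)t^2 + (\sigma_1 \norm{\hat v}^2 + \sigma_2 \norm{\nabla f(\hat x)}^2)t - (\gamma_1 \norm{\hat v}^2 + \gamma_2 \norm{\nabla f(\hat x)}^2)$, where the mixed term $\norm{\hat v}\norm{\nabla f(\hat x)}$ has been absorbed via Young's inequality, and the constants $\rho_i,\sigma_i,\gamma_i$ depend only on $a,\mu,L,s$. Because $a\leq a^*_2$ with slack $\alpha<1$, the design of $\hat\beta_1,\dots,\hat\beta_5$ in~\eqref{eq:a2} guarantees the dominance $\hat\beta_3 > a\hat\beta_1 + a^2\hat\beta_5$ and $\hat\beta_4 > a\hat\beta_2$, and hence $\gamma_1,\gamma_2 > 0$.

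The MIET is then the minimum positive root of $P$. Since $\norm{\hat v}^2$ and $\norm{\nabla f(\hat x)}^2$ appear independently in $P$, the uniform root analysis decouples into the two quadratics $\rho_1 t^2 + \sigma_1 t - \gamma_1 = 0$ and $\rho_2 t^2 + \sigma_2 t - \gamma_2 = 0$, whose positive roots yield precisely the $(\nu_i,\eta_i)$ in the statement. Taking the worst case produces $\miet(a) = -\nu + \sqrt{\nu^2 + \eta}$ with $\nu = \max\{\nu_1,\nu_2\}$, $\eta = \min\{\eta_1,\eta_2\}$, which is strictly positive exactly because of the strict dominance inequalities above. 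For (ii), $\Delta_k = \dstep_{\#}(p_k;a)$ being the first positive zero of $b^\db_{\#}(p_k,\cdot;a)$ and $b^\db_{\#}(p_k,0;a) < 0$ give $b^\db_{\#}(p_k,t;a) \leq 0$ for $t \in [0,\Delta_k]$; Proposition~\ref{prop:upper-bound-derivative} then yields the Lyapunov decay pointwise along each interval. Integrating gives $V(p_{k+1}) \leq e^{-\frac{\sqrt{\mu}}{4}\Delta_k} V(p_k)$, so telescoping and using $f(x) - f(x_*) \leq V(x,v)$ delivers the stated rate.

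The main obstacle is the second step: constructing the majorant $P$ tightly enough that the admissible range~\eqref{eq:a2} and the exact form of $\miet(a)$ in~\eqref{g-definition} come out as claimed. The nontrivial content is the careful bookkeeping that reduces every inhomogeneous term in $A_{\ST}, B^l_{\ST}, B^q_{\ST}, C_{\ST}$ (such as $f(\hat x + t\hat v)-f(\hat x)$, $\norm{\nabla f(\hat x + a\hat v)}^2$, and the cross-products $\langle \nabla f(\hat x), \hat v\rangle$) to the homogeneous template while preserving the sharp dominance $\gamma_i > 0$ that fixes $a^*_2$. Once this majorant is in place, the rest of the argument is a routine quadratic-root computation and a standard differential-inequality integration.
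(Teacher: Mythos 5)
Your proposal is correct and follows essentially the same route as the paper: bound $C_{\ST}$, $A_{\ST}$, $B^l_{\ST}$, $B^q_{\ST}$ by homogeneous quadratic forms in $\norm{\hat v}$ and $\norm{\nabla f(\hat x)}$, use the dominance inequalities enforced by $a\le a_2^*$ (with $\alpha<1$ giving strictness) to make the constant term negative definite, and then lower bound the first positive root of the resulting quadratic in $t$ via the worst-case componentwise ratios $\nu=\max\{\nu_1,\nu_2\}$, $\eta=\min\{\eta_1,\eta_2\}$ and the monotonicity of $z\mapsto -z+\sqrt{z^2+\eta}$. The only cosmetic difference is that the paper makes the ratio-bounding step explicit via a mediant-inequality lemma from the authors' earlier work, and the role of $\alpha<1$ is to make the dominance inequalities strict rather than to enforce $a_2^*\le a_1^*$; your argument already uses the former, so nothing is missing.
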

\begin{proof}
  Regarding fact (i), we prove the result for the $\ST$-case, as the
  $\ET$-case follows from $\dstep_{\ET}(\hat{p};a) \geq
  \dstep_{\ST}(\hat{p}; a)$.
  We start by upper bounding $C_{\ST}(\hat{p};a)$ by a negative
  quadratic function of $\norm{\hat{v}}$ and $\norm{\nabla
    f(\hat{x})}$ as follows,
  \begin{align*}
    & C_{\ST}(\hat{p};a) = -\frac{13\sqrt{\mu}}{16}\norm{\hat{v}}^2 +
    \oneplusmus\frac{-3\sqrt{\mu}}{8L}\norm{\nabla
        f(\hat{x})}^2
    \\
    & \quad -\frac{\mu^2\sqrt{s}}{2L^2}\displaystyle\norm{\nabla
        f(\hat{x})}^2
    +\oneplusmus\big(\sqrt{\mu}\underbrace{(f(\hat x) - f(\hat{x} +
      a\hat{v}))}_{\textrm{(a)}}
    \\
    & \quad + \sqrt{\mu}\underbrace{\norm{\nabla f(\hat{x})}\norm{a
        \hat{v}}}_{\textrm{(b)}} -\frac{\mu^{3/2}}{2}\norm{a\hat{v}}^2
    \\
    & \quad +\underbrace{\langle \nabla f(\hat{x}) - \nabla f (\hat{x}
      + a\hat{v} ) , \hat{v}\rangle}_{\textrm{(c)}} +
    \sqrt{\mu}\underbrace{\langle \nabla f(\hat{x} + a\hat{v}),
      a\hat{v} \rangle}_{\textrm{(d)}}\big).
  \end{align*}
  Using the $L$-Lipschitzness of the gradient and Young's inequality,
  we can easily upper bound
  \begin{align*}
    \textrm{(a)} & \le \underbrace{\langle \nabla f(\hat x + a \hat
      v), - a \hat v \rangle + \frac{L}{2}a^2\norm{\hat v}^2
    }_{\textrm{Using~\eqref{eq:aux-d}}}
    \\
    & = \langle \nabla f(\hat x + a \hat v) - \nabla f(\hat x), - a
    \hat v \rangle + \frac{L}{2}a^2\norm{\hat v}^2
    \\
    & \quad + \langle \nabla f(\hat x), - a \hat v \rangle
    \\
    & \leq La^2\norm{\hat v}^2 + \frac{L}{2}a^2\norm{\hat v}^2 + a
    \big(\frac{\norm{\nabla f(\hat x)}^2}{2} + \frac{\norm{\hat
        v}^2}{2} \big)
    \\
    & = \frac{3La^2 + a}{2}\norm{\hat v}^2 + \frac{a}{2} \norm{\nabla
      f(\hat x)}^2,
    \\
    \textrm{(b)} & \le a \big (\frac{\norm{\nabla f(\hat x)}^2}{2} +
    \frac{\norm{\hat v}^2}{2} \big),
    \\
    \textrm{(c)} & \leq L a \norm{\hat v}^2,
    \\
    \textrm{(d)} & = \langle \nabla f(\hat{x} + a\hat{v}) - \nabla
    f(\hat x) + \nabla f(\hat x), a\hat{v} \rangle
    \\
    & \leq L a^2\norm{\hat v}^2 + \langle \nabla f(\hat x), a \hat v
    \rangle
    \\
    & = \frac{2La^2 + a}{2}\norm{\hat v}^2 + \frac{a}{2}\norm{\nabla
      f(\hat z)}^2.
  \end{align*}
  Note that, with the definition of the constants
  $\hat{\beta}_1,\dots,\hat{\beta}_5>0$ in the statement, we can write
  \begin{align*}
    C_{\ST}(\hat p;a)& \leq a \hat{\beta}_1\norm{\hat{v}}^2 +
    a^2\hat{\beta}_5\norm{\hat v}^2 + a\hat{\beta}_2\norm{\nabla
      f(\hat x)}^2
    \\
    & \quad -\hat{\beta}_3\norm{\hat{v}}^2 -\hat{\beta}_4\norm{\nabla
      f(\hat{x})}^2 .
  \end{align*}
  Therefore, for $a \in [0,a^*_2]$, we have
  \begin{align*}
    a \hat{\beta}_1 + a^2\hat \beta_5 - \hat{\beta}_3 & \le a^*_2
    \hat{\beta}_1 + (a^*_2)^2\hat \beta_5 - \hat{\beta}_3 = -\gamma_1
    < 0
    \\
    a \hat{\beta}_2 - \hat{\beta}_4 & \le a^*_2 \hat{\beta}_2 -
    \hat{\beta}_4 = -\gamma_2< 0,
  \end{align*}
  and hence $C_{\ST}(\hat{p};a)\leq - \gamma_1\norm{\hat{v}}^2 -
  \gamma_2\norm{\nabla f(\hat{x})}^2$.  Similarly,  introducing
  \begin{align*}
    \gamma_3 &= 2 a^2 \mu_s L^2+2 a^2 \oneplusmus \sqrt{\mu }
    L^2+\oneplusmus \sqrt{\mu }+\oneplusmus L +2 \mu,
    \\
    \gamma_4 & = 2 \mu_s + 2 \oneplusmus \sqrt{\mu}, \; \gamma_5 \!=\!
    \frac{1}{8} a \oneplusmus \left(2 a^2 \mu L^2+\mu +2 \sqrt{\mu }
      L\right),
    \\
    \gamma_6 & = \frac{a\mu\oneplusmus}{4}, \; \gamma_7 = \frac{3}{8}
    a^2 \mu_s \sqrt{\mu } L^2+\frac{1}{8} \oneplusmus \sqrt{\mu }
    L+\frac{\mu ^{3/2}}{2},
    \\
    \gamma_8 & = \frac{3 \mu_s\sqrt{\mu}}{8},
  \end{align*}
  one can show that
  \begin{align*}
    A_{\ST}(\hat p;a) \le \hat{A}_{\ST}(\hat p;a) &= \gamma_3
    \norm{\hat v}^2 + \gamma_4 \norm{\nabla f(\hat x)}^2,
    \\
    B^l_{\ST}(\hat p;a) \le \hat{B}^l_{\ST}(\hat p;a) & = \gamma_5
    \norm{\hat v}^2 + \gamma_6 \norm{\nabla f(\hat x)}^2,
    \\
    B^q_{\ST}(\hat p;a) \le \hat{B}^q_{\ST}(\hat p;a) & =\gamma_7
    \norm{\hat v}^2 + \gamma_8 \norm{\nabla f(\hat x)}^2.
  \end{align*}
  Thus, from~\eqref{eq:step_derivative}, we have
  \begin{align}\label{eq:step_derivative_explicit}
    & \dstep_{\ST}(\hat{p};a) \geq \frac{-(\hat A_{\ST}(\hat{p};a) +
      \hat B^l_{\ST}(\hat{p};a))}{2 \hat B^q_{\ST}(\hat{p};a)}
    \\
    &\quad + \sqrt{\left(\frac{\hat A_{\ST}(\hat{p};a) +
          \hat B^l_{\ST}(\hat{p};a)}{ 2 \hat  B^q_{\ST}(\hat{p};a) }\right)^2
      -\frac{C_{\ST}(\hat{p};a)}{\hat  B^q_{\ST}(\hat{p};a)}}. \notag
  \end{align}
  Using now \cite[supplementary material, Lemma~1]{MV-JC:19-nips}, we
  deduce
  \begin{align*}
    \eta \leq 
     \frac{-C_{\ST}(\hat{p};a)}{\hat B^q_{\ST}(\hat{p};a)}  ,
     \quad
     \frac{\hat A_{\ST}(\hat{p};a) + \hat B^l_{\ST}(\hat{p};a)}{2\hat
       B^q_{\ST}(\hat{p};a)} \leq \nu,
  \end{align*}
  where
  \begin{align*}
    \eta &=
    \min\{\frac{\gamma_1}{\gamma_7},\frac{\gamma_2}{\gamma_8}\}, \quad
    \nu = \max\{\frac{\gamma_3 + \gamma_5}{2\gamma_7},\frac{\gamma_4 +
      \gamma_6}{2\gamma_8}\}.
  \end{align*}
  With these elements in place and referring
  to~\eqref{eq:step_derivative_explicit}, we have
  \begin{align*}
    \dstep_{\ST}(\hat{p};a) & \geq \frac{-(\hat A_{\ST}(\hat{p};a) +
      \hat B^l_{\ST}(\hat{p};a))}{2\hat B^q_{\ST}(\hat{p};a)}
    \\
    & \quad + \sqrt{\left(\frac{\hat A_{\ST}(\hat{p};a) +
         \hat  B^l_{\ST}(\hat{p};a)}{ 2\hat B^q_{\ST}(\hat{p};a) }\right)^2 +
      \eta}.
    \end{align*}
    We observe now that $z \mapsto g(z) = -z + \sqrt{z^2 + \eta} $ is
    monotonically decreasing and lower bounded. So, if $z$ is upper
    bounded, then $g(z)$ is lower bounded by a positive
    constant. Taking $z = \frac{(\hat A_{\ST}(\hat{p};a) +
      \hat B^l_{\ST}(\hat{p};a))}{2\hat B^q_{\ST}(\hat{p};a)} \leq \nu$ gives
    the bound of the stepsize.  Finally, the algorithm design together
    with Proposition~\ref{prop:upper-bound-derivative} ensure fact
    (ii) throughout its evolution.
\end{proof}

It is worth noticing that the derivative-based implementation of the
Displaced-Gradient Algorithm generalizes the algorithm proposed in our
previous work~\cite{MV-JC:19-nips} (in fact, the strategy proposed
there corresponds to the choice $a=0$).  The next result characterizes
the convergence properties of the performance-based implementation of
Algorithm~\ref{algo:DG}.

\begin{theorem}\longthmtitle{Convergence 
    of performance-based implementation of Displaced-Gradient
    Algorithm}\label{non-zeno-hb-pb} 
  For $0 \leq a \leq a^*_2$, $\diamond = \pb$, and $\# \in
  \{\ET,\ST\}$, the variable-stepsize strategy in
  Algorithm~\ref{algo:DG} has the following properties
  \begin{enumerate}
  \item[(i)] the stepsize is uniformly lower bounded by the positive
    constant $\miet(a)$;
  \item[(ii)] $ V(p_{k }+t \Xhba (p_k)) \leq e^{-\frac{\sqrt{\mu}}{4}
      t} V(p_k)$ for all $t \in [0,\Delta_k]$ and $k \in \{0\} \cup
    \naturals$.
  \end{enumerate}
  As a consequence, $ f(x_{k+1})-f(x_*) =
  \mathcal{O}(e^{-\frac{\sqrt{\mu}}{4}\sum_{i=0}^k \Delta_i})$.
\end{theorem}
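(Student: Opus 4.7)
The plan is to mirror the structure of the derivative-based convergence theorem (Theorem \ref{non-zeno-hb-db}), leveraging the relationship between $b^\pb_\#$ and $b^\db_\#$ established in Proposition \ref{prop:upper-bound-performance}. The key observation is that $b^\pb_\#(\hat p, t; a) = \int_0^t e^{\frac{\sqrt{\mu}}{4}\zeta} b^\db_\#(\hat p, \zeta; a)\, d\zeta$, so the sign of $b^\pb_\#$ on an initial interval is governed by the sign of $b^\db_\#$ on that same interval.

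For fact (i), I would argue that $\pstep_\#(\hat p; a) \geq \dstep_\#(\hat p; a)$, giving the same MIET $\miet(a)$ already derived in Theorem \ref{non-zeno-hb-db}. The reason is that $C_\#(\hat p; a) = b^\db_\#(\hat p, 0; a) < 0$ for $\hat p \neq p_*$ and $0 \leq a \leq a^*_2$, so $b^\db_\#(\hat p, \zeta; a)$ remains strictly negative on $[0, \dstep_\#(\hat p; a))$. The integrand $e^{\frac{\sqrt{\mu}}{4}\zeta} b^\db_\#(\hat p, \zeta; a)$ is therefore strictly negative on this interval, so $b^\pb_\#(\hat p, t; a) < 0$ for all $t \in (0, \dstep_\#(\hat p; a)]$, which forces its first positive root to occur no earlier than the first positive root of $b^\db_\#$. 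Handling the $\ET$-case reduces to $\pstep_\ET \geq \pstep_\ST$, itself inherited from $b^\pb_\ET \leq b^\pb_\ST$ (direct consequence of Proposition \ref{prop:upper-bound-performance}).

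For fact (ii), I would apply Proposition \ref{prop:upper-bound-performance} on each sampling interval: for $t \in [0, \Delta_k]$ with $\Delta_k = \pstep_\#(p_k; a)$, the inequality
\begin{equation*}
V(p_k + t \Xhba(p_k)) - e^{-\frac{\sqrt{\mu}}{4} t} V(p_k) \leq e^{-\frac{\sqrt{\mu}}{4} t} b^\pb_\#(p_k, t; a)
\end{equation*}
combined with $b^\pb_\#(p_k, t; a) \leq 0$ on $[0, \Delta_k]$ (by construction of $\Delta_k$ as the first positive root) yields the desired integral decay. Chaining this bound across successive iterates gives $V(p_{k+1}) \leq e^{-\frac{\sqrt{\mu}}{4}\sum_{i=0}^k \Delta_i} V(p_0)$. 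The consequence on $f(x_{k+1}) - f(x_*)$ then follows from the standard estimate $f(x) - f(x_*) \leq V(x,v)/\oneplusmus \leq V(x,v)$, exactly as in the continuous case of Theorem \ref{th:hb}.

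I do not expect significant obstacles: the main conceptual content—the MIET derivation, the bounds on $A_\ST, B^l_\ST, B^q_\ST, C_\ST$, and the choice of $a^*_2$—has already been carried out in the derivative-based theorem. The potential subtlety is ensuring that $b^\pb_\#(\hat p, t; a)$ actually admits a first positive root, so the stepsize is well-defined; this is where the fact that $b^\db_\#$ eventually becomes positive (consistent with the finite MIET and the growth of its quadratic/polynomial structure in $t$) carries over, since the exponential weight $e^{\frac{\sqrt{\mu}}{4}\zeta}$ is bounded on compact intervals and does not alter asymptotic behavior of the integral's sign change.
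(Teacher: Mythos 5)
Your proposal is correct and follows essentially the same route as the paper: part (i) is obtained by showing $\pstep_{\#}(\hat p;a)\geq \dstep_{\#}(\hat p;a)$ from the strict negativity of $b^\db_{\#}$ on $[0,\dstep_{\#}(\hat p;a))$ and hence of its exponentially weighted integral, then invoking the MIET of Theorem~\ref{non-zeno-hb-db}(i), while part (ii) and the final rate follow from Proposition~\ref{prop:upper-bound-performance} together with $b^\pb_{\#}(p_k,t;a)\le 0$ on $[0,\Delta_k]$. The only cosmetic difference is that the paper phrases the reduction entirely through the $\ST$ bound via $b^\pb_{\ET}\le b^\pb_{\ST}$, which is equivalent to your handling of the $\ET$ case.
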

\begin{proof}
  To show (i), notice that it is sufficient to prove that
  $\pstep_{\ST}$ is uniformly lower bounded away from zero. This is
  because of the definition of stepsize in~\eqref{eq:step_performance}
  and the fact that $b^\pb_{\ET} (\hat p,t;a) \le b^\pb_{\ST} (\hat
  p,t;a)$ for all $\hat p$ and all $t$.  For an arbitrary fixed $\hat
  p$, note that $t \mapsto b^\db_{\ST}(\hat p,t;a)$ is strictly
  negative in the interval $[0,\dstep_{\ST}(p;a))$ given the
  definition of stepsize in~\eqref{eq:step_derivative}.  Consequently,
  the function $t \mapsto b^\pb_{\ST} (\hat p,t;a)=\int_0^t
  e^{\frac{\sqrt{\mu}}{4} \zeta} b^\db_{\ST}(\hat p;\zeta,a) d\zeta$
  is strictly negative over $(0,\dstep_{\ST}(\hat p;a))$.  From the
  definition of $\pstep_{\ST}$, it then follows that
  $\pstep_{\ST}(\hat p;a)\geq \dstep_{\ST}(\hat p;a)$. The result now
  follows by noting that $ \dstep_{\ST}$ is uniformly lower bounded
  away from zero by a positive constant,
  cf. Theorem~\ref{non-zeno-hb-db}(i).

  To show (ii), we recall that $\Delta_k = \pstep_{\#}(p_k;a)$ for $\#
  \in \{\ET,\ST\}$ and use
  Proposition~\ref{prop:upper-bound-performance} for $\hat p = p_k$ to
  obtain, for all $t \in [0,\Delta_k]$,
  \begin{align*}
    V(p(t)) - e^{-\frac{\sqrt{\mu}}{4} t} V(p_k) & \leq e^{-\frac{\sqrt{\mu}}{4} t} b^\pb_{\#}
    (p_k,t;a)
    \\
    & \le e^{-\frac{\sqrt{\mu}}{4} t} b^\pb_{\#} (p_k,\Delta_k;a) = 0 ,
  \end{align*}
  as claimed.
\end{proof}

The proof of Theorem~\ref{non-zeno-hb-pb} brings up an interesting
geometric interpretation of the relationship between the stepsizes
determined according to the derivative- and performance-based
approaches.  In fact, since
\begin{align*}
  \frac{d}{dt} b^\pb_{\#}(\hat p,t;a) = e^{\frac{\sqrt{\mu}}{4} t}
  b^\db_{\#} (\hat p,t;a) ,
\end{align*}
we observe that $\dstep_{\#}(\hat{p};a)$ is precisely the (positive)
critical point of $t \mapsto b^\pb_{\#}(\hat p,t;a)$. Therefore,
$\pstep_{\ST}(\hat p;a)$ is the smallest nonzero root of $t \mapsto
b^\pb_{\#}(\hat p,t;a)$, whereas $\dstep_{\ST}(\hat p;a)$ is the time
where $t \mapsto b^\pb_{\#}(\hat p,t;a)$ achieves its smallest value,
and consequently is furthest away from zero.  This confirms the fact
that the performance-based approach obtains larger stepsizes than the
derivative-based approach.



\section{Exploiting Sampled Information to Enhance Algorithm
  Performance}\label{sec:exploit-sampled-info}

Here we describe two different refinements of the implementations
proposed in Section~\ref{sec:performance-based} to further enhance
their performance. Both of them are based on further exploiting the
sampled information about the system.  The first refinement,
cf. Section~\ref{sec:adaptive}, looks at the possibility of adapting
the value of the gradient displacement as the algorithm is
executed. The second refinement, cf. Section~\ref{sec:HOH}, develops a
high-order hold that more accurately approximates the evolution of the
continuous-time heavy-ball dynamics with displaced gradient.

\subsection{Adaptive Gradient Displacement}\label{sec:adaptive}

The derivative- and performance-based triggered implementations 
in Section~\ref{sec:trigger-design} both employ a constant value of
the parameter~$a$. Here, motivated by the observation made in
Remark~\ref{rem:a-over-region}, we develop triggered implementations
that adaptively adjust the value of the gradient displacement
depending on the region of the space to which the state belongs.
Rather than relying on the condition~\eqref{eq:adaptive-a}, which
would require partitioning the state space based on bounds on~$\nabla
f(x)$ and~$v$, we seek to compute on the fly a value of the
parameter~$a$ that ensures the exponential decrease of the Lyapunov
function at the current state. Formally, the strategy is stated in
Algorithm~\ref{algo:ADG}.

\begin{algorithm}[h]
  \SetAlgoLined
  \textbf{Design Choices:} $\diamond \in \{\db,\pb\}$, $\# \in \{\ET,\ST\}$
  \textbf{Initialization:} Initial point ($p_0$), 
  objective function ($f$), tolerance ($\epsilon$), increase rate
  ($r_i > 1$), decrease rate ($0 < r_d < 1$), stepsize lower bound
  ($\tau$), $a\ge 0$, $k=0$
  \\
  \While{$\norm{\nabla f(x_k)}\geq \epsilon$}{
    increase = True

    exit = False

   \While{ {\rm exit} = {\rm False}}{
    \While{$C_{\#}(p_k;a) \geq 0$  }{
        $a = a r_d$

        increase = False
      }
      \uIf{$\step^\diamond_{\#}(p_k;a) \geq \tau$}{
        exit = True
      }
     \uElse{
     $a = a r_d$

     increase = False
     }
    }
       
    Compute stepsize $\Delta_k = \step^\diamond_{\#}(p_k;a)$
    \\
      Compute next iterate $p_{k+1} = p_k +\Delta_k \Xhb^a(p_k)$
      \\
  Set $k = k+1$

  \uIf{\rm increase = True}{ $a = a r_i$ }

 }
 \caption{Adaptive Displaced-Gradient Algorithm}\label{algo:ADG}
\end{algorithm}

\begin{proposition}\longthmtitle{Convergence  of Adaptive Displaced-Gradient
    Algorithm}\label{prop:non-zeno-sampled-algorithm}
  For 
  $\diamond \in \{\db,\pb\}$, $\# \in \{\ET,\ST\}$, and $\tau \le
  \min_{a\in [0,a^*_2]} \miet(a)$, the variable-stepsize strategy in
  Algorithm~\ref{algo:ADG} has the following properties:
  \begin{enumerate}
  \item[(i)] it is executable (i.e., at each iteration, the parameter
    $a$ is determined in a finite number of steps);
  \item[(ii)] the stepsize is uniformly lower bounded by~$\tau$;
  \item[(iii)] it satisfies $ f(x_{k+1})\!-\!f(x_*) \!=\!
    \mathcal{O}(e^{-\frac{\sqrt{\mu}}{4}\sum_{i=0}^k \Delta_i} )$, for
    $k \in \{0\} \cup \naturals$.
  \end{enumerate}
 \end{proposition}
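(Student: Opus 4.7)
The plan is to combine the convergence analyses already developed for constant displacement (Theorems~\ref{th:continuous-sampled}, \ref{non-zeno-hb-db} and~\ref{non-zeno-hb-pb}) with a careful check that the inner adaptive loops always terminate, and then to observe that the stepsize rule, by construction, preserves the Lyapunov decay irrespective of the particular value of $a$ chosen at each iteration. The key observation is that the exit conditions of the inner loops, namely $C_{\#}(p_k;a) < 0$ and $\step^\diamond_{\#}(p_k;a) \ge \tau$, are precisely what the propositions and theorems on zero-order hold require to propagate the exponential Lyapunov decrease.

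For (i), first note that $\miet$ depends continuously on $a$ and is strictly positive on the compact set $[0,a^*_2]$ (by Theorem~\ref{non-zeno-hb-db}(i)), so $\min_{a\in[0,a^*_2]}\miet(a) > 0$ and the standing hypothesis on $\tau$ is consistent. I would then argue that both inner \texttt{while} loops execute only finitely many times: since $0 < r_d < 1$, starting from any positive $a$ the updated value $ar_d^j$ eventually enters $(0,a^*_2]$; once there, the bound $C_{\#}(p_k;a) \le -\gamma_1\|\hat v\|^2 - \gamma_2\|\nabla f(\hat x)\|^2 < 0$ established in the proof of Theorem~\ref{non-zeno-hb-db} guarantees the first loop exits, while $\step^\diamond_{\#}(p_k;a) \ge \miet(a) \ge \min_{a\in[0,a^*_2]}\miet(a) \ge \tau$ guarantees the second loop exits. (We use $p_k \ne p_*$ throughout; the termination criterion $\|\nabla f(x_k)\| \ge \epsilon$ keeps us away from the equilibrium.) Claim (ii) is then immediate from the exit condition, since $\Delta_k = \step^\diamond_{\#}(p_k;a) \ge \tau$.

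For (iii), let $a_k$ denote the value of the parameter at which iteration $k$ exits the inner loops. At exit we have $C_{\#}(p_k;a_k) < 0$, hence by Proposition~\ref{prop:upper-bound-derivative} (respectively Proposition~\ref{prop:upper-bound-performance}) the bound $b^\diamond_{\#}(p_k,\cdot;a_k)$ is strictly negative on $[0,\step^\diamond_{\#}(p_k;a_k))$ by the very definition~\eqref{eq:step_derivative}--\eqref{eq:step_performance}. Consequently, for every $t \in [0,\Delta_k]$ the exponential Lyapunov decrease $V(p_k + t\Xhba(p_k)) \le e^{-\frac{\sqrt{\mu}}{4}t} V(p_k)$ holds, exactly as in Theorems~\ref{non-zeno-hb-db}(ii) and~\ref{non-zeno-hb-pb}(ii); in particular $V(p_{k+1}) \le e^{-\frac{\sqrt{\mu}}{4}\Delta_k} V(p_k)$. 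Iterating and using $f(x_{k+1}) - f(x_*) \le V(p_{k+1}) \le V(p_0) e^{-\frac{\sqrt{\mu}}{4}\sum_{i=0}^k \Delta_i}$ yields (iii).

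The only subtle point, and the place where I would be most careful, is to make explicit that the Lyapunov decrease in (iii) does not require $a_k$ to lie in $[0,a^*_2]$: the stepsize rule enforces the per-step decay regardless of $a$, and the role of $[0,a^*_2]$ is only to certify the lower bound $\tau$ on the stepsize, a purpose that is fulfilled as soon as the adaptive mechanism has driven $a$ into that interval during the first loop. The optional increase step $a \leftarrow a r_i$ at the end of an iteration is harmless: any excursion of $a$ above $a^*_2$ will be corrected at the next iteration by the same finite cascade of decreases, and hence does not threaten termination, the stepsize bound, or the telescoped exponential decay.
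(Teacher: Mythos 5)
Your proposal is correct and follows essentially the same route as the paper's proof: continuity of $a\mapsto\miet(a)$ over the compact set $[0,a_2^*]$ to make the hypothesis on $\tau$ meaningful, finite termination of the inner loops because repeated multiplication by $r_d<1$ drives $a$ into $[0,a_2^*]$ where $C_{\#}(p_k;a)<0$ and $\step^\diamond_{\#}(p_k;a)\geq\miet(a)\geq\tau$ are guaranteed by Theorem~\ref{non-zeno-hb-db}, and the prescribed per-step Lyapunov decay (valid for whatever $a_k$ the loop exits with, since only $C_{\#}(p_k;a_k)<0$ is needed) telescoped to give (iii). Your explicit remark that the decay in (iii) does not require $a_k\in[0,a_2^*]$ is a useful clarification of a point the paper leaves implicit, but it is the same argument.
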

\begin{proof}
  Notice first that the function~$a \mapsto \miet(a)>0$ defined
  in~\eqref{g-definition} is continuous and therefore attains its
  minimum over a compact set.  At each iteration,
  Algorithm~\ref{algo:ADG} first ensures that $C_{\#}(\hat{p};a)< 0 $,
  decreasing $a$ if this is not the case. We know this process is
  guaranteed as soon as $a < a_2^*$ (cf. proof of
  Theorem~\ref{non-zeno-hb-db}) and hence only takes a finite number
  of steps. Once $C_{\#}(\hat{p};a)< 0 $, the stepsize could be
  computed to guarantee the desired decrease of the Lyapunov function
  $V$.  The algorithm next checks if the stepsize is lower bounded
  by~$\tau$. If that is not the case, then the algorithm reduces $a$
  and re-checks if $C_{\#}(\hat{p};a) < 0$.  With this process and in
  a finite number of steps, the algorithm eventually either computes a
  stepsize lower bounded by~$\tau$ with $a > a^*_2$ or $a$ decreases
  enough to make $a \leq a^*_2$, for which we know that the stepsize
  is already lower bounded by~$\tau$. These arguments establish facts
  (i) and (ii) at the same time. Finally, fact (iii) is a consequence
  of the prescribed decreased of the Lyapunov function along the
  algorithm execution.
\end{proof}

\subsection{Discretization via High-Order Hold}\label{sec:HOH}

The modified zero-order hold based on employing displaced gradients
developed in Section~\ref{sec:performance-based} is an example of the
possibilities enabled by more elaborate uses of sampled
information. In this section, we propose another such use based on the
observation that the continuous-time heavy-ball dynamics can be
decomposed as the sum of a linear term and a nonlinear
term. Specifically, we have
\begin{align*}
  \Xhb^a (p) & =
  \begin{bmatrix}
    v
    \\
    - 2\sqrt{\mu}v
  \end{bmatrix}
  + 
  \begin{bmatrix}
    0
    \\
    - \oneplusmus\nabla f(x + av)
    \end{bmatrix} .
\end{align*}
Note that the first term in this decomposition is linear, whereas the
other one contains the potentially nonlinear gradient term that
complicates finding a closed-form solution.  Keeping this in mind when
considering a discrete-time implementation, it would seem reasonable
to perform a zero-order hold only on the nonlinear term while exactly
integrating the resulting differential equation. Formally, a
zero-order hold at $\hat{p}=[\hat{x},\hat{v}]$ of the nonlinear term
above yields a system of the form
\begin{align}\label{eq:linear-inhomo}
  \begin{bmatrix}
    \dot x
    \\
    \dot v
  \end{bmatrix}
  & =
  A
  \begin{bmatrix}
    x
    \\
    v
  \end{bmatrix}
  +   b ,
\end{align}
with $p(0) = \hat p$, and where
\begin{align*}
  A=
\begin{bmatrix}
  0 & 1
  \\
  0 & -2\sqrt{\mu}
\end{bmatrix},
\quad b = \begin{bmatrix}
    0
    \\
    - \oneplusmus\nabla f(\hat x + a\hat v)
  \end{bmatrix} .
\end{align*}
Equation~\eqref{eq:linear-inhomo} is an in-homogeneous linear
dynamical system, which is integrable by the method of variation of
constants~\cite{LP:00}.  Its solution is given by $ p(t) = e^{At}
\big(\int_0^t e^{-A\zeta} b d\zeta + p(0) \big) $, or equivalently,
\begin{subequations}\label{eq:hoh-flow}
  \begin{align}
    x(t) & =  \hat{x} -\frac{\oneplusmus\nabla f(\hat{x} + a \hat{v})t }{ 2
      \sqrt{\mu} } 
    \\
    & \quad + (1-e^{-2 \sqrt{\mu } t}) \frac{\oneplusmus\nabla f(\hat{x} +
      a \hat{v})+2 \sqrt{\mu } \hat{v}}{4\mu} , \notag
    \\
    v(t) & = e^{-2 \sqrt{\mu } t}\hat{v} + (e^{-2 \sqrt{\mu } t} -1)
    \frac{\oneplusmus\nabla f(\hat{x} + a \hat{v}) }{2 \sqrt{\mu
      }} .
  \end{align}
\end{subequations}
We refer to this trajectory as a \emph{high-order-hold integrator}.
In order to develop a discrete-time algorithm based on this type of
integrator, the next result provides a bound of the evolution of the
Lyapunov function $V$ along the high-order-hold integrator
trajectories.  The proof is presented in
Appendix~\ref{app:appendix}.

\begin{proposition}\longthmtitle{Upper bound for derivative-based
    triggering with high-order
    hold}\label{prop:upper-bound-derivative-hoh}
  Let $a\ge 0$ 
  and define
  \begin{align*}
    \hsd_{\ET}(\hat{p}, t;a) &= \fA_{\ET}(\hat{p}, t;a) +
    \fB_{\ET}(\hat{p}, t;a)
    \\
    & \quad + \fC_{\ET}(\hat{p};a) +\fD_{\ET}(\hat{p},t;a),
    \\
    \hsd_{\ST}(\hat{p}, t;a) & = (\fA_{\ST}^q(\hat{p};a) + \fB^q_{\ST}
    (\hat{p};a) ) t^2 + (\fA_{\ST}^l(\hat{p};a)
    \\
    & \quad + \fB^l_{\ST}(\hat{p};a) + \fD_{\ST} (\hat{p};a) )t
    +\fC_{\ST}(\hat{p};a) ,
  \end{align*}
  where
  \begin{align*}
    & \fA_{\ET}(\hat p,t;a) =\oneplusmus(\langle \nabla f(x(t)) -
    \nabla f(\hat{x}),v(t) \rangle
    \\
    & \quad - \langle v(t) - \hat{v}, \nabla f(\hat{x} + a\hat{v})
    \rangle
    \\
    & \quad - \sqrt{\mu}\langle x(t) - \hat{x}, \nabla f(\hat{x} +
    a\hat{v}) \rangle)
    \\
    &\quad -\sqrt{\mu}\langle v(t) - \hat{v},v(t)\rangle,
    \\
    & \fB_{\ET}(\hat p,t;a) = \frac{\sqrt{\mu}}{4}\big(\oneplusmus
    (f(x(t)) - f(\hat{x}))
    \\
    &\quad - \sqrt{\mu}\oneplusmus t \frac{\norm{\nabla f(\hat{x} +
        a\hat{v})}^2}{L}
    \\
    &\quad + \sqrt{\mu}\oneplusmus t \langle \nabla f(\hat{x} + a
    \hat{v}), a \hat{v}\rangle + \frac{1}{4} (\norm{v(t)}^2 -
    \norm{\hat{v}}^2)
    \\
    &\quad + \frac{1}{4} \norm{v(t)-\hat{v} + 2 \sqrt{\mu}
      (x(t)-\hat{x})}^2
    \\
    & \quad + \frac{1}{2}\langle v(t)-\hat{v} + 2 \sqrt{\mu}
    (x(t)-\hat{x}), \hat{v} \rangle \big),
    \\
    & \fC_{\ET}(\hat p;a) = C_{\ET}(\hat{p};a) ,
    \\
    & \fD_{\ET} (\hat p,t;a) = \oneplusmus\langle \nabla f(\hat{x}),
    v(t) - \hat{v}\rangle
    \\
    & \quad -\sqrt{\mu}\langle \hat{v} , v(t) - \hat{v} \rangle ,
  \end{align*}
  and 
  \begin{align*}
    & \fA_{\ST}^l(\hat p;a) = \norm{2 \sqrt{\mu}\hat v
      +\oneplusmus\nabla f(\hat x + a \hat v) } \Big( \sqrt{\mu}
    \norm{\hat v}
    \\
    & \quad + \frac{L\oneplusmus}{2\sqrt{\mu}} \norm{\hat v} +
    \frac{3\oneplusmus}{2} \norm{\nabla f(\hat x + a \hat v)} \Big)
    \\
    & \quad + \frac{\mu_s}{2} \norm{\nabla f(\hat x + a \hat
      v)} \Big( \frac{L}{\sqrt{\mu}} \norm{\hat v} + \norm{\nabla
      f(\hat x + a \hat v)} \Big) ,
    \\
    & \fA_{\ST}^q(\hat p;a) = \norm{2 \sqrt{\mu}\hat v +
      \oneplusmus\nabla f(\hat x + a \hat v) }
    \\
    & \quad \cdot{}\Big( \big( \frac{L\oneplusmus}{2\sqrt{\mu}} +
    \sqrt\mu \big)\norm{2 \sqrt{\mu}\hat v + \oneplusmus\nabla f(\hat
      x + a \hat v) }
    \\
    & \quad + \frac{L\mu_s}{2\sqrt{\mu}} \norm{\nabla f(\hat x
      + a \hat v)} \Big) ,
    \\
    & \fB^l_{\ST}(\hat p;a) = \frac{\sqrt{\mu} \oneplusmus}{4} \Big(
    \frac{\oneplusmus}{2\sqrt{\mu}}\norm{\nabla f(\hat x + a \hat
      v)}\norm{\nabla f(\hat x)}
    \\
    & \quad + \frac{1}{2} \norm{2 \sqrt{\mu}\hat v \!+\!
      \oneplusmus\nabla f(\hat x + a \hat v) } \big(
    \frac{\norm{\nabla f(\hat x)}}{\sqrt{\mu}} \!+\!  \frac{\norm{\hat
        v}}{\oneplusmus}\big)
    \\
    &\quad - \sqrt{\mu} \frac{\norm{\nabla f(\hat{x} + a
        \hat{v})}^2}{L} + (a\sqrt{\mu} - \frac{1}{2}) \langle \nabla
    f(\hat{x} + a \hat{v}), \hat{v} \rangle \Big),
    \\
    & \fB^q_{\ST}(\hat p;a) = 
    \frac{10 \mu ^2+L^2 \oneplusmus}{32 \mu ^{3/2}}
\\
& \quad \cdot{}\norm{2 \sqrt{\mu}\hat v + \oneplusmus\nabla f(\hat x + a \hat v) }^2
    \\
    & \quad +
    \frac{\mu_s \left(4 \mu ^2+L^2 \oneplusmus\right)}{32 \mu
      ^{3/2}} \norm{\nabla f(\hat x + a \hat v)}^2
    \\
    & \quad+\frac{\oneplusmus \left(4 \mu ^2+L^2
        \oneplusmus\right)}{16 \mu ^{3/2}}\norm{2 \sqrt{\mu}\hat v +
      \oneplusmus\nabla f(\hat x + a \hat v)}
    \\
    & \quad \cdot{}\norm{\nabla f(\hat x + a \hat v)}),
    \\
    & \fC_{\ST}(\hat p;a) = C_{\ST}(\hat p;a) ,
    \\
    & \fD_{\ST}(\hat p;a) = \norm{2 \sqrt{\mu} \hat v +
      \oneplusmus\nabla f(\hat{x} + a \hat{v})} \cdot
    \\
    & \quad \Big( \oneplusmus\norm{\nabla f(\hat x)} +
    \sqrt{\mu}\norm{\hat v} \Big).
  \end{align*}
  Let $t\mapsto p(t)$ be the high-order-hold integrator
  trajectory~\eqref{eq:hoh-flow} from $p(0) = \hat{p}$.  Then, for
  $t\ge 0$,
  \begin{align*}
    \frac{d}{dt} V(p(t)) +\frac{\sqrt{\mu}}{4} V({p}(t))
    & \leq \hsd_{\ET}(\hat{p},t;a) \leq \hsd_{\ST}(\hat{p}, t;a) .
  \end{align*}
\end{proposition}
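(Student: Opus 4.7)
The plan is to mirror the proof of Proposition~\ref{prop:upper-bound-derivative} for the zero-order hold, but now along the closed-form high-order-hold trajectory~\eqref{eq:hoh-flow}. I begin by computing $\frac{d}{dt}V(p(t)) + \frac{\sqrt{\mu}}{4} V(p(t))$ directly via the chain rule, using that along the HOH flow one has $\dot{x}(t) = v(t)$ and $\dot{v}(t) = -2\sqrt{\mu}v(t) - \oneplusmus\nabla f(\hat{x}+a\hat{v})$. Expanding $\nabla V$ and grouping, I obtain an expression involving $\langle \nabla f(x(t)),v(t)\rangle$, $\sqrt{\mu}\norm{v(t)}^2$, and inner products of $v(t)$ and $x(t)-x_*$ against $\nabla f(\hat{x}+a\hat{v})$.

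The key step is to add and subtract the appropriate evaluations at the sample point $\hat{p}$ so as to isolate the quantity $\langle \nabla V(\hat{p}),\Xhba(\hat{p})\rangle + \frac{\sqrt{\mu}}{4}V(\hat{p})$, which (by the argument in the proof of Theorem~\ref{th:continuous-sampled} used to derive Proposition~\ref{prop:upper-bound-derivative}) is bounded above by $C_{\ET}(\hat{p};a)$; this identification produces the term $\fC_{\ET}(\hat{p};a)=C_{\ET}(\hat{p};a)$. The remaining pieces I collect as $\fA_{\ET}$, which contains the gradient-evaluation discrepancies $\langle \nabla f(x(t))-\nabla f(\hat{x}),v(t)\rangle$ and the samples $\langle v(t)-\hat{v},\nabla f(\hat{x}+a\hat{v})\rangle$, $\langle x(t)-\hat{x},\nabla f(\hat{x}+a\hat{v})\rangle$, together with the $\sqrt{\mu}\langle v(t)-\hat{v},v(t)\rangle$ term coming from the $-\sqrt{\mu}\norm{v(t)}^2$ contribution; as $\fB_{\ET}$, which tracks the Lyapunov-function increment $\frac{\sqrt{\mu}}{4}(V(p(t))-V(\hat{p}))$, rewritten by expanding $\norm{v(t)+2\sqrt{\mu}(x(t)-x_*)}^2$ around $\norm{\hat{v}+2\sqrt{\mu}(\hat{x}-x_*)}^2$ so that the $x_*$-dependent $\norm{\hat{v}+2\sqrt{\mu}(\hat{x}-x_*)}^2$ piece is absorbed into the $C$-term, with the remaining cross terms handled via strong convexity and the quadratic upper bound on $f$ (the $-\sqrt{\mu}\oneplusmus t\norm{\nabla f(\hat{x}+a\hat{v})}^2/L$ and $\sqrt{\mu}\oneplusmus t\langle\nabla f(\hat{x}+a\hat{v}),a\hat{v}\rangle$ terms arise exactly as in the bound on Term~I from the proof of Theorem~\ref{th:continuous-sampled}); and as $\fD_{\ET}$, which collects the residual first-order sampling pieces $\oneplusmus\langle \nabla f(\hat{x}),v(t)-\hat{v}\rangle$ and $-\sqrt{\mu}\langle\hat{v},v(t)-\hat{v}\rangle$. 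This yields the first inequality.

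For the second inequality, the main tool is the closed-form identity
\[
v(t)-\hat{v} \;=\; \frac{e^{-2\sqrt{\mu}t}-1}{2\sqrt{\mu}}\bigl(2\sqrt{\mu}\hat{v}+\oneplusmus\nabla f(\hat{x}+a\hat{v})\bigr),
\]
together with the elementary inequalities $0 \le 1-e^{-2\sqrt{\mu}t} \le 2\sqrt{\mu}t$ and $0 \le 2\sqrt{\mu}t - 1 + e^{-2\sqrt{\mu}t} \le 2\mu t^2$, which give $\norm{v(t)-\hat{v}}\le t\,\norm{2\sqrt{\mu}\hat{v}+\oneplusmus\nabla f(\hat{x}+a\hat{v})}$ and, using $\dot{x}=v$, $\norm{x(t)-\hat{x}}\le t\norm{\hat{v}}+\tfrac{t^2}{2}\oneplusmus\norm{\nabla f(\hat{x}+a\hat{v})}$. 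Substituting these into each of $\fA_{\ET}$, $\fB_{\ET}$, $\fD_{\ET}$, combined with Cauchy--Schwarz, $L$-Lipschitzness of $\nabla f$ (for the $\langle \nabla f(x(t))-\nabla f(\hat{x}),v(t)\rangle$-type terms), and the descent-lemma bound $f(x(t))-f(\hat{x})\le \langle\nabla f(\hat{x}),x(t)-\hat{x}\rangle+\tfrac{L}{2}\norm{x(t)-\hat{x}}^2$, reduces every discrepancy to a polynomial in $t$ of degree at most two. Collecting the quadratic coefficients gives $\fA^q_{\ST}$ and $\fB^q_{\ST}$, the linear coefficients give $\fA^l_{\ST}$, $\fB^l_{\ST}$, $\fD_{\ST}$, and the constant piece gives $\fC_{\ST}=C_{\ST}(\hat{p};a)$.

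The main obstacle is the careful bookkeeping in the middle step: eliminating the unknown minimizer $x_*$ from the Lyapunov cross term $\norm{v(t)+2\sqrt{\mu}(x(t)-x_*)}^2$ requires a delicate expansion where the $x_*$-dependent contributions either combine into the bound on $C_{\ET}$ (inherited from Theorem~\ref{th:continuous-sampled}) or cancel against analogous contributions from $\frac{\sqrt{\mu}}{4}V(\hat{p})$. The polynomial bounds in the $\ST$ step are then routine but tedious, and the precise numerical coefficients in $\fA^q_{\ST}, \fA^l_{\ST}, \fB^q_{\ST}, \fB^l_{\ST}, \fD_{\ST}$ follow only after all cross terms involving $\hat{v}$, $\nabla f(\hat{x})$, and $\nabla f(\hat{x}+a\hat{v})$ are grouped together and the factor $\norm{2\sqrt{\mu}\hat{v}+\oneplusmus\nabla f(\hat{x}+a\hat{v})}$ is systematically extracted from $v(t)-\hat{v}$ and from the $v(t)$-increments in $x(t)-\hat{x}$.
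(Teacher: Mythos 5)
Your overall strategy is the same as the paper's: decompose $\frac{d}{dt}V(p(t))+\frac{\sqrt{\mu}}{4}V(p(t))$ so as to isolate the sampled-state quantity $\langle\nabla V(\hat p),\Xhba(\hat p)\rangle+\frac{\sqrt{\mu}}{4}V(\hat p)\le C_{\ET}(\hat p;a)$ (the paper groups the remainder as $\langle\nabla V(p(t))-\nabla V(\hat p),\Xhb^{a,\hat p}(p(t))\rangle$, $\langle\nabla V(\hat p),\Xhb^{a,\hat p}(p(t))-\Xhb^{a,\hat p}(\hat p)\rangle$ and $\frac{\sqrt{\mu}}{4}(V(p(t))-V(\hat p))$, which are exactly your $\fA_{\ET}$, $\fD_{\ET}$ and $\fB_{\ET}$), and then convert the ET bound into the ST one by estimating the increments $\norm{v(t)-\hat v}$ and $\norm{x(t)-\hat x}$ from the closed-form flow. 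Your identity for $v(t)-\hat v$ and the resulting bound $\norm{v(t)-\hat v}\le t\norm{2\sqrt{\mu}\hat v+\oneplusmus\nabla f(\hat x+a\hat v)}$ coincide with the paper's, and your treatment of the $x_*$-dependent cross term matches the intended argument.

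The ST step, however, does not go through as written. Your bound $\norm{x(t)-\hat x}\le t\norm{\hat v}+\frac{t^2}{2}\oneplusmus\norm{\nabla f(\hat x+a\hat v)}$ is valid, but it is quadratic in $t$, and $\norm{x(t)-\hat x}$ enters the ET expressions squared (through $f(x(t))-f(\hat x)\le\langle\nabla f(\hat x),x(t)-\hat x\rangle+\frac{L^2}{2\mu}\norm{x(t)-\hat x}^2$ and through $\norm{v(t)-\hat v+2\sqrt{\mu}(x(t)-\hat x)}^2$) and multiplied by $\norm{v(t)-\hat v}=O(t)$ inside $\fA_{\ET}$. With your estimate these terms are cubic and quartic in $t$, so they cannot be collected into the stated quadratic polynomial $\hsd_{\ST}$, and the specific coefficients $\fA^l_{\ST}$, $\fA^q_{\ST}$, $\fB^l_{\ST}$, $\fB^q_{\ST}$ would not come out; your claim that every discrepancy reduces to a polynomial of degree at most two is inconsistent with your own intermediate bound. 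What is needed is an estimate of $\norm{x(t)-\hat x}$ that is linear in $t$: applying $1-e^{-2\sqrt{\mu}t}\le 2\sqrt{\mu}t$ directly in the closed-form expression for $x(t)$ gives $\norm{x(t)-\hat x}\le\frac{t}{2\sqrt{\mu}}\norm{2\sqrt{\mu}\hat v+\oneplusmus\nabla f(\hat x+a\hat v)}+\frac{\oneplusmus t}{2\sqrt{\mu}}\norm{\nabla f(\hat x+a\hat v)}$, and it is this bound that produces the degree-two polynomial with the stated coefficients. With that replacement the rest of your argument is the paper's proof.
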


Analogously to what we did in Section~\ref{sec:trigger-design}, we
build on this result to establish an upper bound for the
performance-based triggering condition with the high-order-hold
integrator.

\begin{proposition}\longthmtitle{Upper bound for performance-based
    triggering with high-order
    hold}\label{prop:upper-bound-performance-hoh}
  Let $0 \leq a$ and
  \begin{align}\label{eq:bpb}
    \hsp_{\#}(\hat{p},t;a) &= \int_0^te^{\frac{\sqrt{\mu}}{4} \zeta}\hsd_{\#}(\hat
    p,\zeta;a) d\zeta ,
  \end{align}
  for $\# \in \{\ET,\ST\}$.  Let $t\mapsto p(t)$ be the
  high-order-hold integrator trajectory~\eqref{eq:hoh-flow} from $p(0)
  = \hat{p}$.  Then, for $t\ge 0$,
  \begin{align*}
    V(p(t)) \!-\! e^{-\frac{\sqrt{\mu}}{4} t} V(\hat p) \!\leq\!
    e^{-\frac{\sqrt{\mu}}{4} t} \hsp_{\ET} (\hat{p},t;a) \!\leq\!
    e^{-\frac{\sqrt{\mu}}{4} t} \hsp_{\ST} (\hat{p},t;a).
  \end{align*}
\end{proposition}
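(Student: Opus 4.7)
The plan is to mirror the proof of Proposition~\ref{prop:upper-bound-performance} and simply replace the zero-order hold derivative bound with its high-order-hold counterpart from Proposition~\ref{prop:upper-bound-derivative-hoh}. First I would factor
\[
V(p(t)) - e^{-\frac{\sqrt{\mu}}{4} t} V(\hat p) = e^{-\frac{\sqrt{\mu}}{4} t}\bigl(e^{\frac{\sqrt{\mu}}{4} t} V(p(t)) - V(\hat p)\bigr),
\]
so that the claim reduces to upper bounding the bracketed quantity by $\hsp_{\ET}(\hat p,t;a)$ and then by $\hsp_{\ST}(\hat p,t;a)$.

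Next, by the fundamental theorem of calculus and the product rule,
\[
e^{\frac{\sqrt{\mu}}{4} t} V(p(t)) - V(\hat p) = \int_0^t \frac{d}{d\zeta}\bigl(e^{\frac{\sqrt{\mu}}{4}\zeta} V(p(\zeta))\bigr) d\zeta = \int_0^t e^{\frac{\sqrt{\mu}}{4}\zeta}\Bigl(\frac{d}{d\zeta} V(p(\zeta)) + \frac{\sqrt{\mu}}{4} V(p(\zeta))\Bigr) d\zeta.
\]
The integrand is exactly the derivative-based quantity controlled in Proposition~\ref{prop:upper-bound-derivative-hoh}, so it is pointwise bounded above by $e^{\frac{\sqrt{\mu}}{4}\zeta} \hsd_{\ET}(\hat p,\zeta;a)$ and in turn by $e^{\frac{\sqrt{\mu}}{4}\zeta} \hsd_{\ST}(\hat p,\zeta;a)$ (the latter because $e^{\frac{\sqrt{\mu}}{4}\zeta}>0$ preserves the inequality $\hsd_{\ET}\leq \hsd_{\ST}$ provided by the same proposition). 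Integrating these pointwise inequalities over $[0,t]$ and recalling the definition~\eqref{eq:bpb} of $\hsp_{\#}$ yields
\[
e^{\frac{\sqrt{\mu}}{4} t} V(p(t)) - V(\hat p) \leq \hsp_{\ET}(\hat p,t;a) \leq \hsp_{\ST}(\hat p,t;a),
\]
and multiplying through by $e^{-\frac{\sqrt{\mu}}{4} t}$ delivers the stated chain.

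I do not anticipate any real obstacle: the substantive work (which includes dealing with the specific closed form~\eqref{eq:hoh-flow} of the high-order-hold trajectory and all the $\fA,\fB,\fC,\fD$ terms) has already been carried out in Proposition~\ref{prop:upper-bound-derivative-hoh}; the present argument is the same integral-accumulation reasoning used to pass from derivative- to performance-based bounds in the zero-order hold case, and is independent of the particular trajectory employed.
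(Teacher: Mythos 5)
Your proof is correct and is exactly the argument the paper has in mind: the paper omits this proof, stating it is "analogous to that of Proposition~\ref{prop:upper-bound-performance}," and your integral-accumulation reasoning with the derivative bound from Proposition~\ref{prop:upper-bound-derivative-hoh} substituted in is precisely that analogue.
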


Using Proposition~\ref{prop:upper-bound-derivative-hoh}, the proof of
this result is analogous to that of
Proposition~\ref{prop:upper-bound-performance}, and we omit it for
space reasons.  Propositions~\ref{prop:upper-bound-derivative-hoh}
and~\ref{prop:upper-bound-performance-hoh} are all we need to fully
specify the variable-stepsize algorithm based on high-order-hold
integrators. Formally, we set
\begin{align}
  \hstep^\diamond_{\#}(\hat{p};a) = \min \setdef{t >
    0}{\mathfrak{b}^\diamond_{\#}(\hat{p},t;a) = 0} ,
\end{align}
for $\diamond \in \{\db,\pb \}$ and $\# \in \{\ET,\ST\}$. With this in place,
we design Algorithm~\ref{algo:HOH}, which is a higher-order
counterpart to  Algorithm~\ref{algo:ADG}, and whose convergence
properties are characterized in the following result.

\begin{algorithm}[h]
  \SetAlgoLined
  %

\textbf{Design Choices:} $\diamond \in \{\db,\pb\}$, $\# \in \{\ET,\ST\}$
  \textbf{Initialization:} Initial point ($p_0$), 
  objective function ($f$), tolerance ($\epsilon$), increase rate
  ($r_i > 1$), decrease rate ($0 <r_d <1$), stepsize lower bound
  ($\tau$), $a\ge 0$, $k=0$
  \\
  \While{$\norm{\nabla f(x_k)}\geq \epsilon$}{
    increase = True

    exit = False

   \While{ {\rm exit} = {\rm False}}{
    \While{$\fC_{\#}(p_k;a) \geq 0$  }{
        $a = a r_d$

        increase = False
      }
      \uIf{$\hstep^\diamond_{\#}(p_k;a) \geq \tau$}{
        exit = True
      }
     \uElse{
     $a = a r_d$

     increase = False
     }

    }
       
    Compute stepsize $\Delta_k = \hstep^\diamond_{\#}(p_k;a)$
    \\
       Compute next iterate $p_{k+1}$ using \eqref{eq:hoh-flow}
      \\
  Set $k = k+1$

  \uIf{\rm increase = True}{ $a = a r_i$ }

 }






  \caption{Adaptive High-Order-Hold Algorithm}\label{algo:HOH}
\end{algorithm}

\begin{proposition}\longthmtitle{Convergence
    of Adaptive High-Order-Hold Algorithm}\label{non-zeno:foh}
  For 
  $\diamond \in \{\db,\pb\}$, and $\# \in \{\ET,\ST\}$, there exists
  $\miet^\diamond$ such that for $\tau \leq \miet^\diamond$, the
  variable-stepsize strategy in Algorithm~\ref{algo:HOH} has the
  following properties:
  \begin{enumerate}
  \item[(i)] it is executable (i.e., at each iteration, the parameter
    $a$ is determined in a finite number of steps);
  \item[(ii)] the stepsize is uniformly lower bounded by~$\tau$;
  \item[(iii)] it satisfies $ f(x_{k+1})\!-\!f(x_*) \!=\!
    \mathcal{O}(e^{-\frac{\sqrt{\mu}}{4}\sum_{i=0}^k \Delta_i} )$, for
    $k \in \{0\} \cup \naturals$.
  \end{enumerate}
\end{proposition}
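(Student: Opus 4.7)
The plan is to follow the same blueprint as Proposition~\ref{prop:non-zeno-sampled-algorithm}, substituting the zero-order-hold bounds of Propositions~\ref{prop:upper-bound-derivative} and~\ref{prop:upper-bound-performance} by the high-order-hold bounds of Propositions~\ref{prop:upper-bound-derivative-hoh} and~\ref{prop:upper-bound-performance-hoh}. The skeleton of the argument is: (a) produce a positive continuous function $a \mapsto \miet^\diamond(a)$ that uniformly lower-bounds $\hstep^\diamond_{\#}(\hat p;a)$ in $\hat p$ for $a \in [0,a^*_2]$; (b) set $\miet^\diamond := \min_{a\in[0,a^*_2]} \miet^\diamond(a) > 0$ by continuity and compactness; (c) argue that the inner \texttt{while} loops of Algorithm~\ref{algo:HOH} terminate in finitely many decreases of $a$ whenever $\tau\le\miet^\diamond$; and (d) derive the convergence rate from the enforced per-step decay of $V$.

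For step (a), I would first restrict to $\diamond = \db$ and $\# = \ST$, since $\hstep^\db_\ET(\hat p;a) \geq \hstep^\db_\ST(\hat p;a)$ follows from $\hsd_\ET \leq \hsd_\ST$ in Proposition~\ref{prop:upper-bound-derivative-hoh}. The crucial observation is that $\fC_\ST(\hat p;a) = C_\ST(\hat p;a)$, so the negative-definite quadratic bound $C_\ST(\hat p;a) \leq -\gamma_1\norm{\hat v}^2 - \gamma_2\norm{\nabla f(\hat x)}^2$ that was established in the proof of Theorem~\ref{non-zeno-hb-db} for $a\in[0,a^*_2]$ is directly usable. Next, I would bound each of $\fA^l_\ST, \fA^q_\ST, \fB^l_\ST, \fB^q_\ST, \fD_\ST$ by a quadratic form in $(\norm{\hat v}, \norm{\nabla f(\hat x)})$ with coefficients that are continuous in $a$. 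The routine tools are Young's inequality for cross products, the $L$-Lipschitz estimate $\norm{\nabla f(\hat x + a\hat v)} \le \norm{\nabla f(\hat x)} + aL\norm{\hat v}$, and the induced estimate $\norm{2\sqrt\mu\hat v + \oneplusmus\nabla f(\hat x + a\hat v)} \le 2\sqrt\mu\norm{\hat v} + \oneplusmus(\norm{\nabla f(\hat x)} + aL\norm{\hat v})$ that controls the recurring factor coming from the high-order-hold integrator. Collecting terms gives a majorant of the form $\hsd_\ST(\hat p,t;a) \le \hat\gamma^q(a) t^2 + \hat\gamma^l(a) t + \fC_\ST(\hat p;a)$ in which the coefficients of $t^2$ and $t$ are bounded by expressions of the type $\tilde\gamma(a)(\norm{\hat v}^2 + \norm{\nabla f(\hat x)}^2)$, and $\fC_\ST$ is negative definite in those same variables. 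Applying the ratio estimate of \cite[supp.\ Lemma~1]{MV-JC:19-nips} as in Theorem~\ref{non-zeno-hb-db}(i) then produces a positive lower bound $\miet^\db(a)$ on the smallest positive root of this quadratic, uniform in $\hat p$ and continuous in $a$.

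For the performance-based case, the geometric argument of Theorem~\ref{non-zeno-hb-pb}(i) transfers verbatim: because $\hsd_\ST(\hat p, \cdot;a) < 0$ on $[0, \hstep^\db_\ST(\hat p;a))$, the exponentially weighted integral $\hsp_\ST(\hat p, \cdot;a)$ defined in~\eqref{eq:bpb} is strictly negative on $(0, \hstep^\db_\ST(\hat p;a))$, so $\hstep^\pb_\ST(\hat p;a) \geq \hstep^\db_\ST(\hat p;a) \geq \miet^\db(a)$; take $\miet^\pb(a) := \miet^\db(a)$. Steps (c) and (d) then follow exactly the proof of Proposition~\ref{prop:non-zeno-sampled-algorithm}: the inner loops terminate because after at most $\lceil\log_{r_d}(a^*_2/a)\rceil$ reductions the value of $a$ enters $[0,a^*_2]$, where both $\fC_\#(p_k;a) < 0$ and $\hstep^\diamond_\#(p_k;a) \geq \miet^\diamond \geq \tau$ hold; meanwhile, Propositions~\ref{prop:upper-bound-derivative-hoh}--\ref{prop:upper-bound-performance-hoh} ensure that along each sampling interval $V(p_{k+1}) \leq e^{-\sqrt{\mu}\Delta_k/4}V(p_k)$, which telescopes and combines with $f(x)-f(x_*) \leq V(x,v)$ to yield the stated rate.

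The main obstacle is the bookkeeping in step (a): the high-order-hold bounds contain the extra cross-terms involving $\norm{2\sqrt\mu\hat v + \oneplusmus\nabla f(\hat x + a\hat v)}$ and the expression $\fD_\ST$, none of which appear in the zero-order-hold analysis, and these must all be reduced to genuine quadratics in $(\norm{\hat v}, \norm{\nabla f(\hat x)})$ with coefficients continuous in $a$. The substantive issue to double-check is that the $a$-dependent coefficients of the $t^2$ and $t$ terms remain bounded as $a \to 0$ and as $a \to a^*_2$, so that the min defining $\miet^\diamond$ is attained at a strictly positive value. Once this continuity and positivity are verified, the three conclusions follow mechanically from the same logic as in Proposition~\ref{prop:non-zeno-sampled-algorithm}.
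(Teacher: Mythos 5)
Your proposal is correct and follows exactly the route the paper intends: the paper omits this proof, stating only that it is analogous to Proposition~\ref{prop:non-zeno-sampled-algorithm} with lengthier computations, and your argument is precisely that analogue — exploiting $\fC_{\#}=C_{\#}$ to reuse the negative-definite bound from Theorem~\ref{non-zeno-hb-db}, majorizing the remaining $\ST$ coefficients by quadratics in $(\norm{\hat v},\norm{\nabla f(\hat x)})$ with $a$-continuous coefficients, and transferring the derivative-based MIET to the performance-based case via the weighted-integral argument of Theorem~\ref{non-zeno-hb-pb}. The bookkeeping concerns you flag are exactly the ``lengthier computations'' the authors allude to, and your continuity/compactness argument for $\min_{a\in[0,a^*_2]}\miet^\diamond(a)>0$ is the right closing step.
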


We omit the proof of this result, which is analogous to that of
Proposition~\ref{prop:non-zeno-sampled-algorithm}, with lengthier
computations.

\section{Simulations}\label{sec:simulation}
Here we illustrate the performance of the methods resulting from the
proposed resource-aware discretization approach to accelerated
optimization flows. Specifically, we simulate in two examples the
performance-based implementation of the Displaced Gradient algorithm
(denoted DG$^\pb$) and the derivative- and performance-based
implementations of the High-Order-Hold (HOH$^\db$ and HOH$^\pb$
respectively) algorithms. We compare these algorithms against the
Nesterov's accelerated gradient and the heavy-ball methods, as they
still exhibit similar or superior performance to the discretization
approaches proposed in the literature, cf. Section~\ref{sec:intro}.



\subsection*{Optimization of Ill-Conditioned Quadratic Objective
  Function}
Consider the optimization of the objective function
$\map{f}{\real^2}{\real}$ defined by $f(x) = 10^{-2}x_1^2 + 10^2
x_2^2$. Note that $\mu = 2\cdot 10^{-2}$ and $L = 2\cdot 10^{2}$.  We
use $s = \mu/(36L^2)$ and initialize the velocity according
to~\eqref{eq:initial-state}. For DG$^\pb$, HOH$^\db$, and HOH$^\pb$,
we set $a=0.1$ and implement the event-triggered approach (at each
iteration, we employ a numerical zero-finding routine to explicitly
determine the stepsizes $ \pstep_{\ET}$, $\hstep^\db_{\ET}$, and
$\hstep^\pb_{\ET}$, respectively).


Figure~\ref{fig:stepsize:quadratic}(a) illustrates how the stepsize of
HOH$^\pb$ changes during the first $1000$ iterations. After the tuning
of the stepsize during the first iterations, it becomes quite steady
(likely due to the simplicity of quadratic functions) until the
trajectory approaches the minimizer. 
After $5$ iterations, the algorithm stepsize becomes almost equal to
the optimal stepsize.

\begin{figure}[htb]
  \centering
  \subfigure[]{\includegraphics[width=0.83\linewidth]{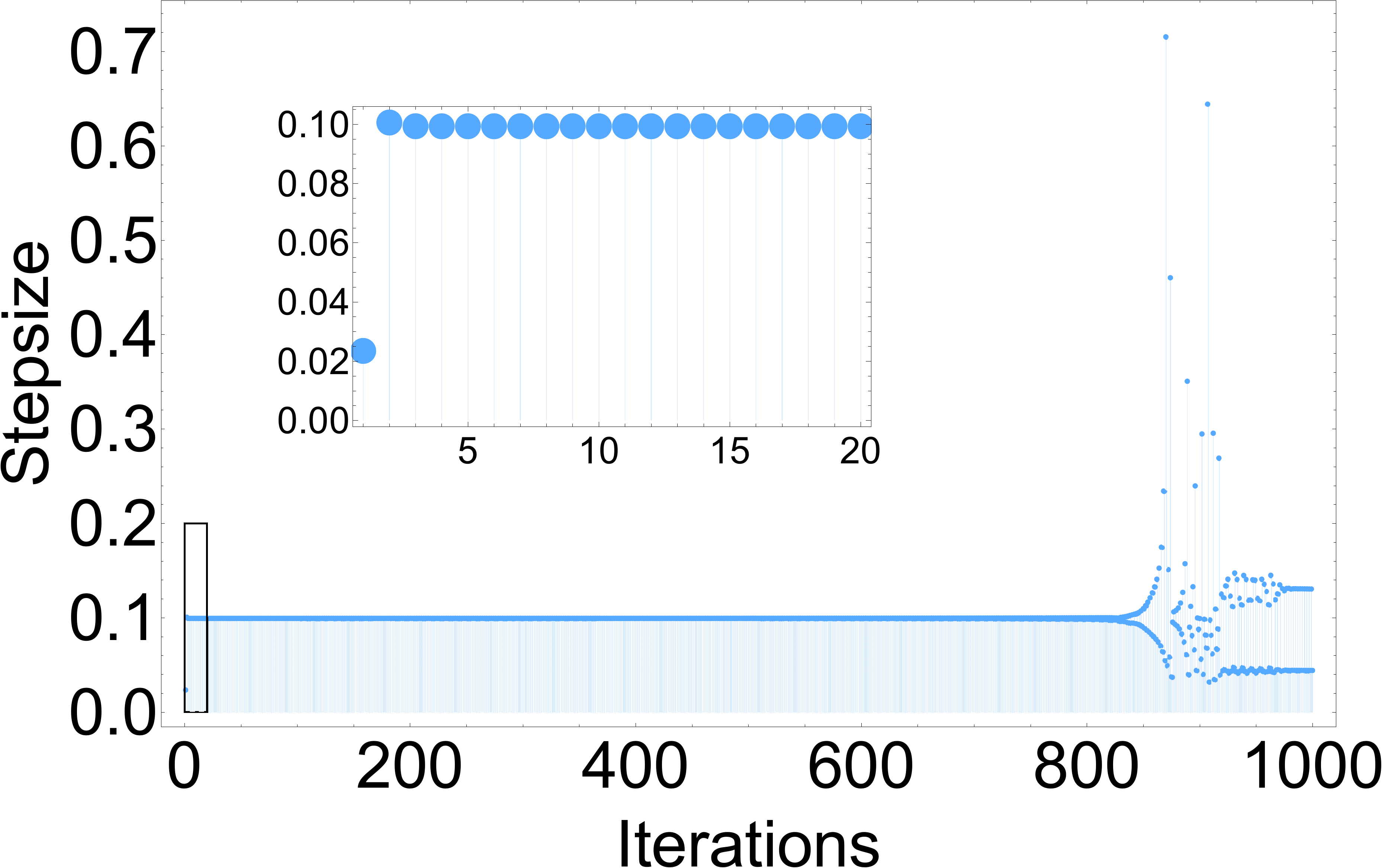}}
  \\
  \subfigure[]{\includegraphics[width=0.8\linewidth]{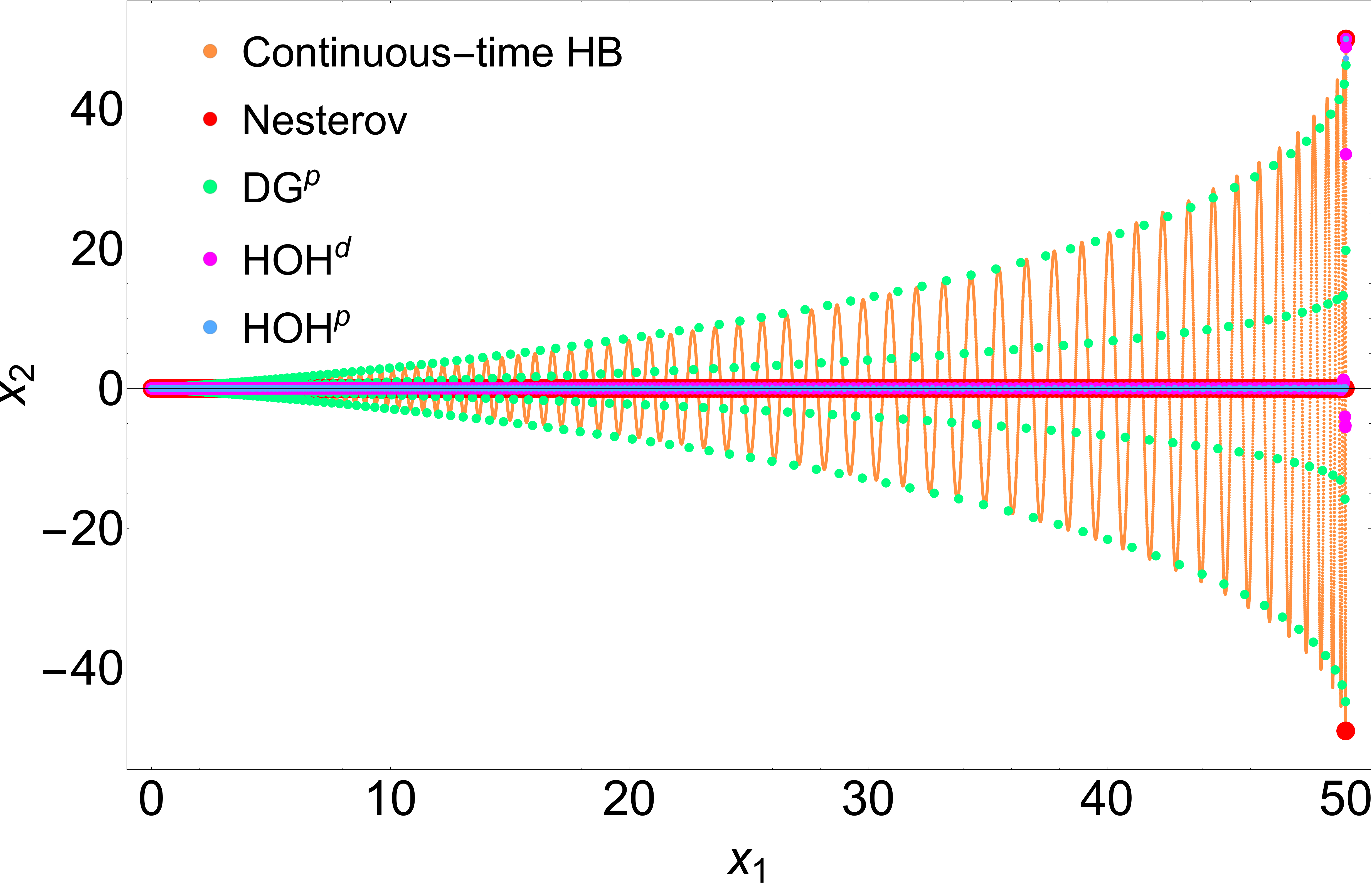}}
  \caption{Ill-conditioned quadratic objective function example. (a)
    Evolution of the stepsize along the execution of HOH$^\pb$ during
    the first $1000$ iterations. (b) State evolution along DG$^\pb$,
    HOH$^\db$, HOH$^\pb$, continuous heavy-ball dynamics, and
    Nesterov's method starting from $x = (50,50)$ and $v
      = (-0.0023, -4.7139)$.}\label{fig:stepsize:quadratic}
  \vspace*{-1ex}
\end{figure}

Figure~\ref{fig:stepsize:quadratic}(b) compares the performance of
DG$^\pb$, HOH$^\db$, and HOH$^\pb$ against the continuous heavy-ball
method and the discrete Nesterov method for strongly convex
functions. The DG$^\pb$ algorithm takes large stepsizes following the
evolution
of the continuous heavy-ball along the straight lines $p(t) = p_k +
t\Xhb^a(p_k)$. Meanwhile, the higher-order nature of the hold employed
by HOH$^\db$ and HOH$^\pb$ makes them able to leap over the
oscillations, yielding a state evolution similar to Nesterov's method.
Figure~\ref{fig:stepsize:quadratic:bottom} shows the evolution of the
objective and Lyapunov functions. We observe that after some initial
iterations, HOH$^\pb$ outperforms Nesterov's method. Eventually, also
DG$^\pb$ catches up to Nesterov's method.
\begin{figure}[htb]
  \centering
  \subfigure[]{\includegraphics[width=0.83\linewidth]{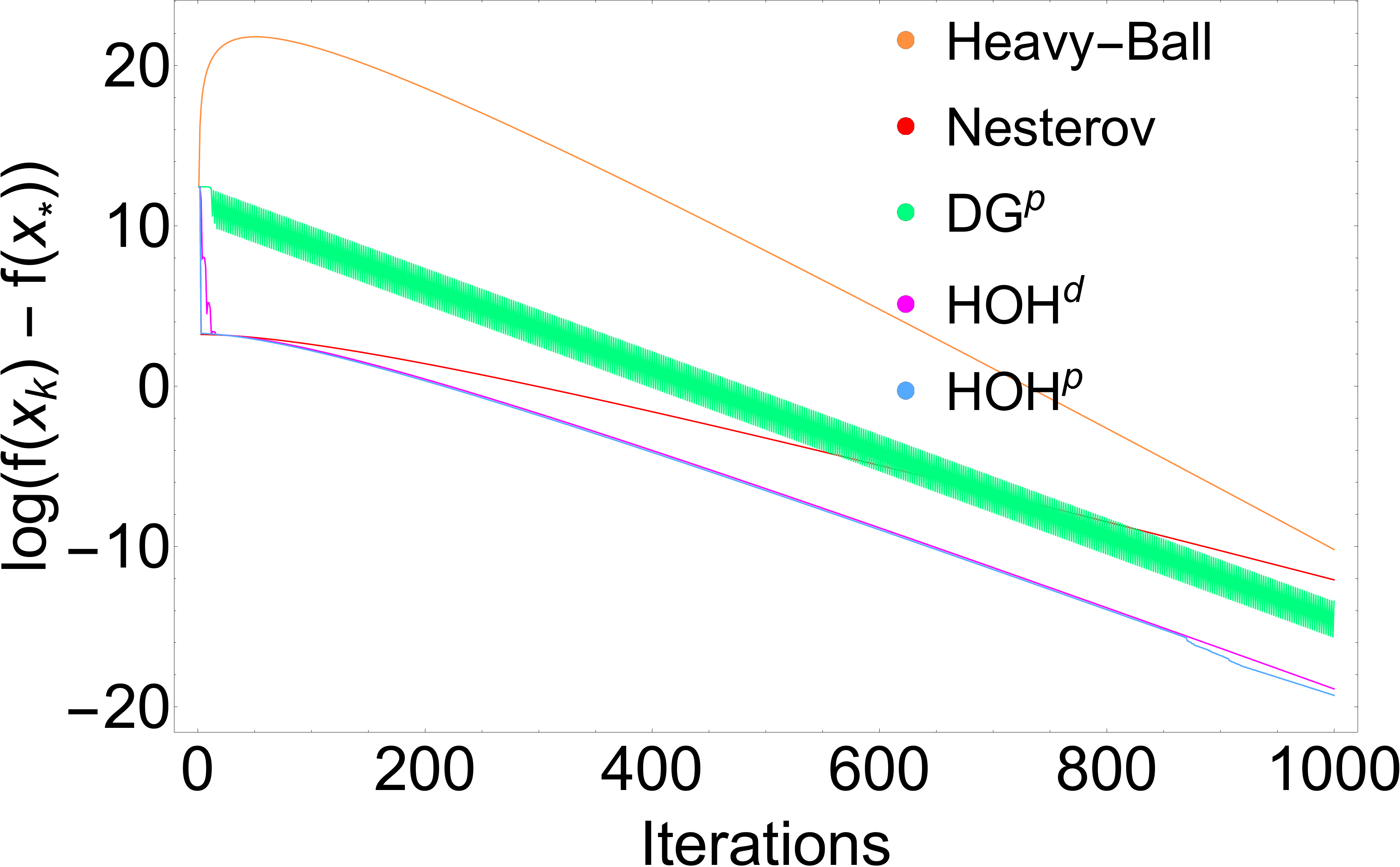}}
  \\
  \subfigure[]{
    \includegraphics[width=0.83\linewidth]{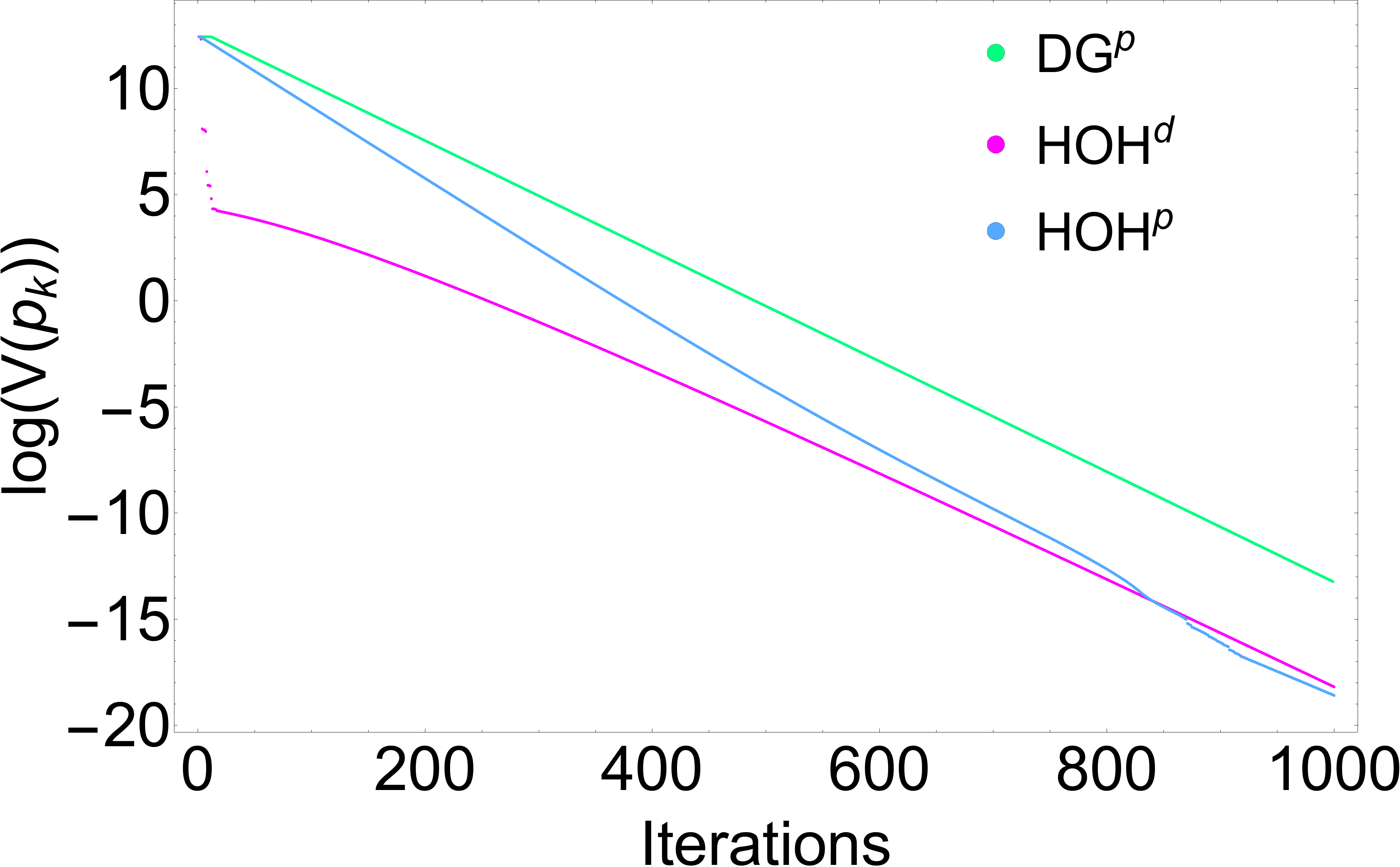}}
  \caption{Ill-conditioned quadratic objective function example. (a)
    Evolution of the logarithm of the objective function under
    DG$^\pb$, HOH$^\db$, HOH$^\pb$, the heavy-ball method, and
    Nesterov's method starting from $x = (50,50)$ and $v = (-0.0023,
    -4.7139)$. (b) Corresponding evolution of the logarithm of the
    Lyapunov function along DG$^\pb$, HOH$^\db$,
    and~HOH$^\pb$.}\label{fig:stepsize:quadratic:bottom}
  \vspace*{-1ex}
\end{figure}

\subsection*{Logarithmic Regression}
Consider the optimization of the regularized logistic regression cost
function $\map{f}{\real^4}{\real}$ defined by $f(x) =
\sum_{i=1}^{10}\log (1 + e^{-y_i\langle z_i, x\rangle}) +
\frac{1}{2}\norm{x}^2$, where the points $\{z_i\}_{i=1}^{10} \subset
\mathbb{R}^4$ are generated randomly using a uniform distribution in
the interval $[-5,5]$, and the points $\{y_i\}_{i=1}^{10} \subset
\{-1,1\}$ are generated similarly with quantized values.  This
objective function is $1$-strongly convex and one can also compute the
value $L=177.49$. We use $a = 0.025$ and $s = \mu/(36L^2)$, and
initialize the velocity according
to~\eqref{eq:initial-state}. Figure~\ref{fig:stepsize:logarithmic}(a)
and (b) show the evolution of the stepsize along HOH$^\pb$, which
changes as a function of the state looking to satisfy the desired
decay of the Lyapunov function.
\begin{figure}[htb]
  \centering
  \subfigure[]{\includegraphics[width=0.83\linewidth]{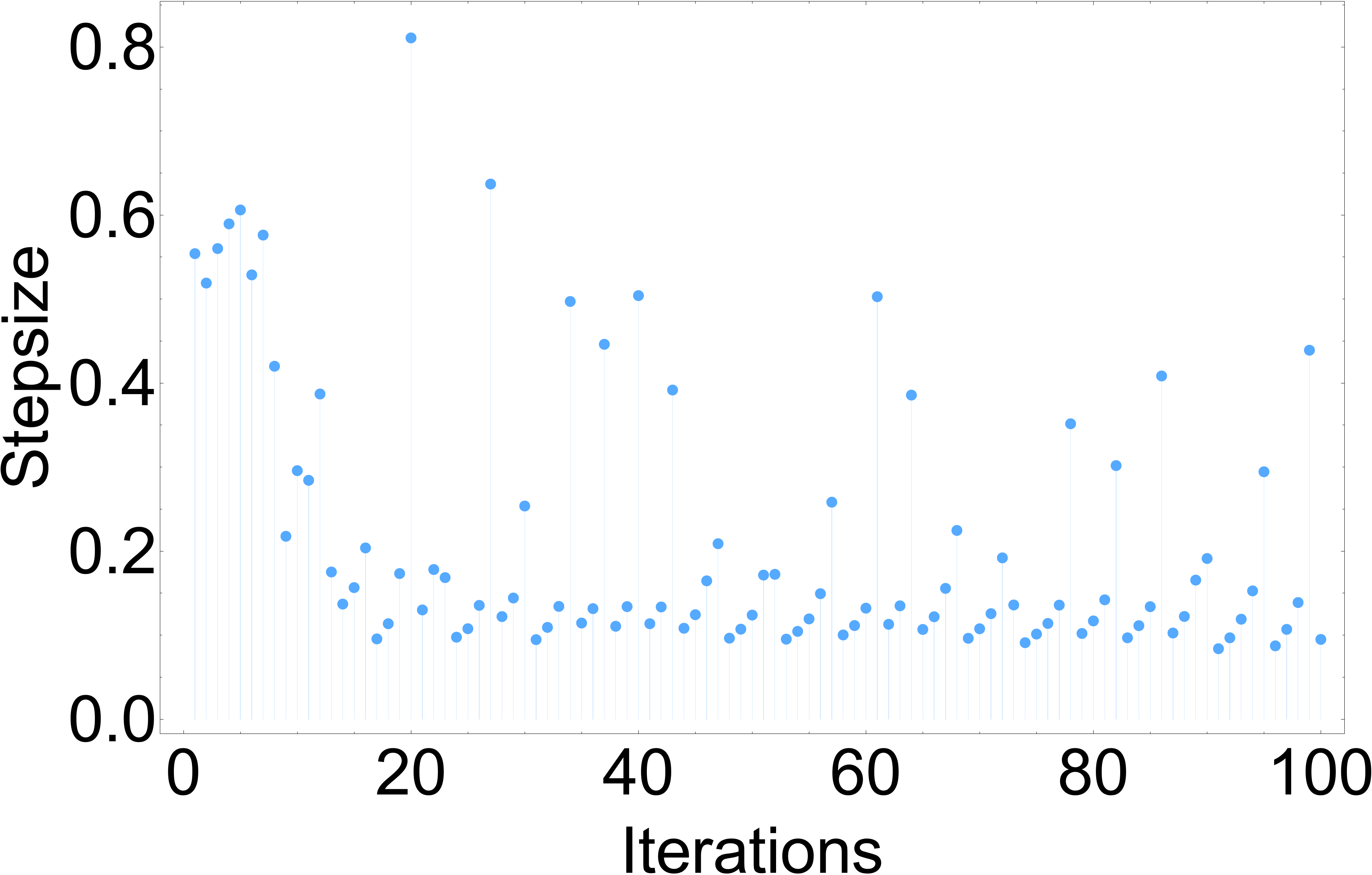}}
  \\
  \subfigure[]{
    \includegraphics[width=0.83\linewidth]{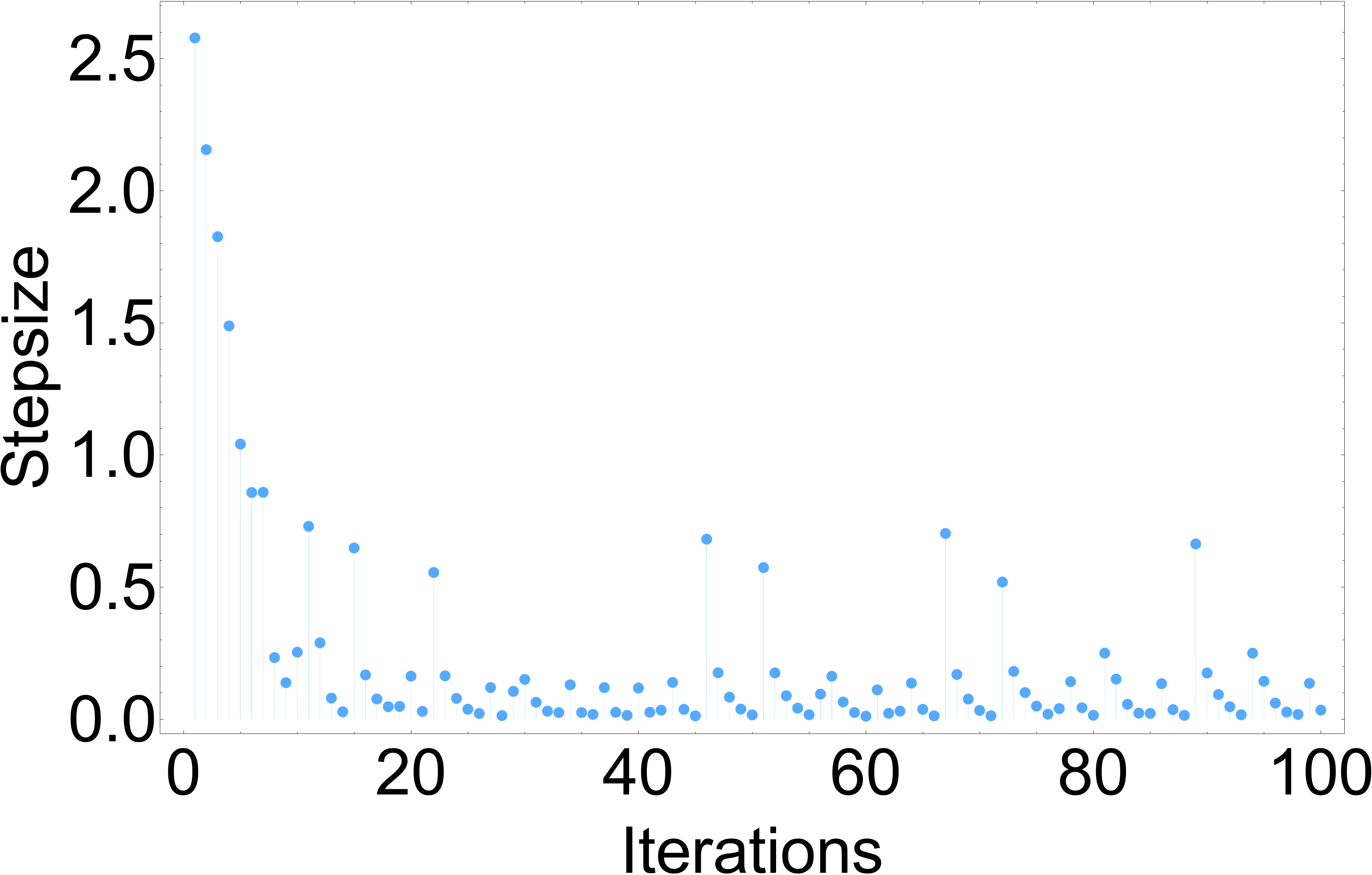}}
  \caption{Logarithmic regression example. (a) Evolution of the
    stepsize along the execution of HOH$^\pb$ starting from $x=
    (50,50, 50, 50)$ and $v = (-0.1026, -0.09265, -0.1078,
    -0.0899)$. Notice the complex pattern, with significant increases
    and oscillations along the trajectory. (b) Difference between the
    optimal stepsize (computed using the exact Lyapunov function,
    which assumes knowledge of the minimizer) and the stepsize of
    HOH$^\pb$.  The largest difference is achieved at the beginning:
    after a few iterations, the difference descreases significantly,
    periodically becoming almost
    zero.}\label{fig:stepsize:logarithmic}
\vspace*{-1ex}
\end{figure}
Figure~\ref{figure:zoh} shows the evolution of the objective and
Lyapunov functions. We observe how HOH$^\db$ and HOH$^\pb$ outperform
Nesterov's method, although eventually the heavy-ball algorithm
performs the best.  The Lyapunov function decreases at a much faster
rate along HOH$^\db$ and HOH$^\pb$ than along DG$^\pb$.

\begin{figure}[htb]
  \centering
  \subfigure[]{\includegraphics[width=0.83\linewidth]{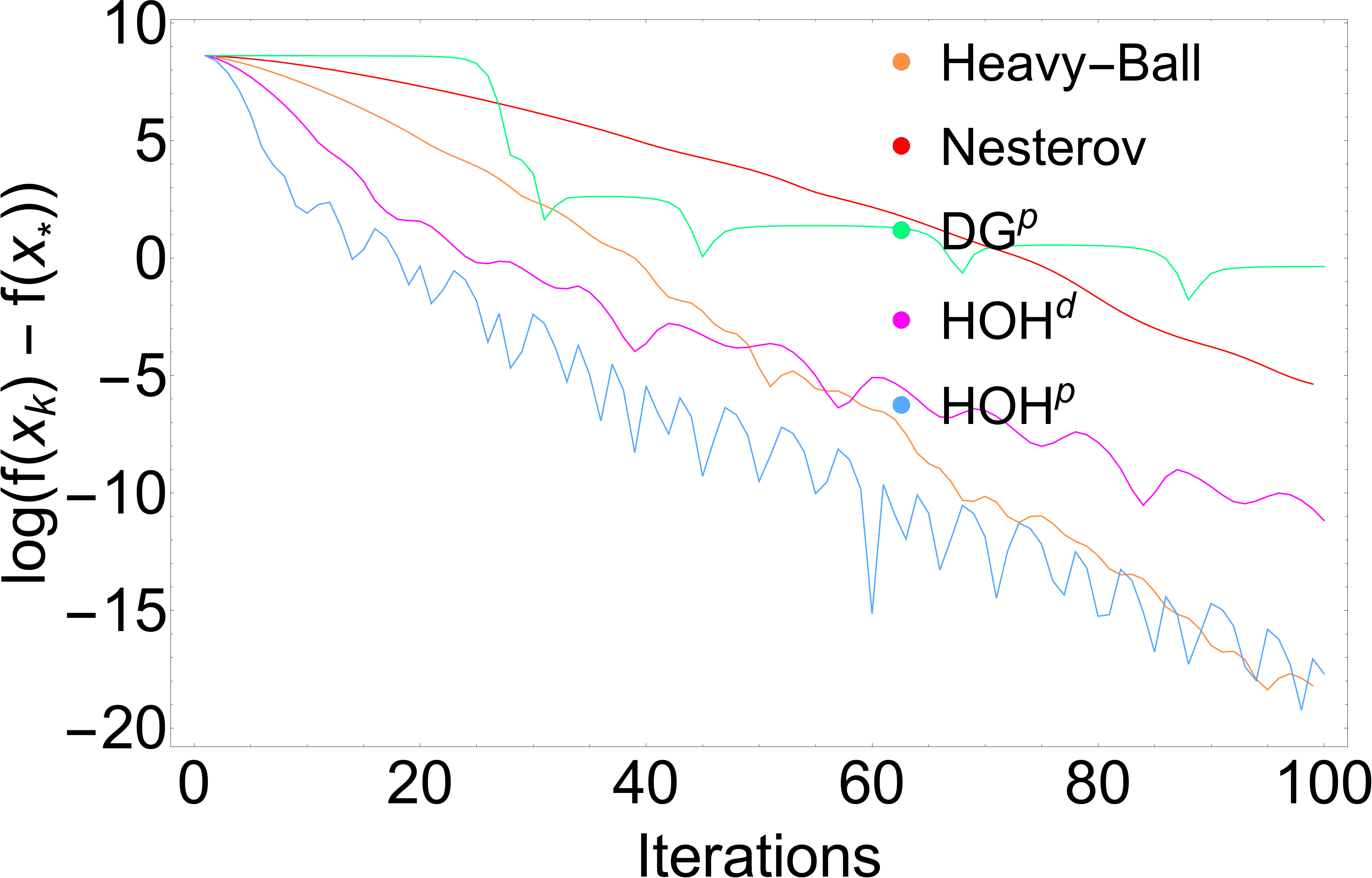}}
  \\
  \subfigure[]{
    \includegraphics[width=0.83\linewidth]{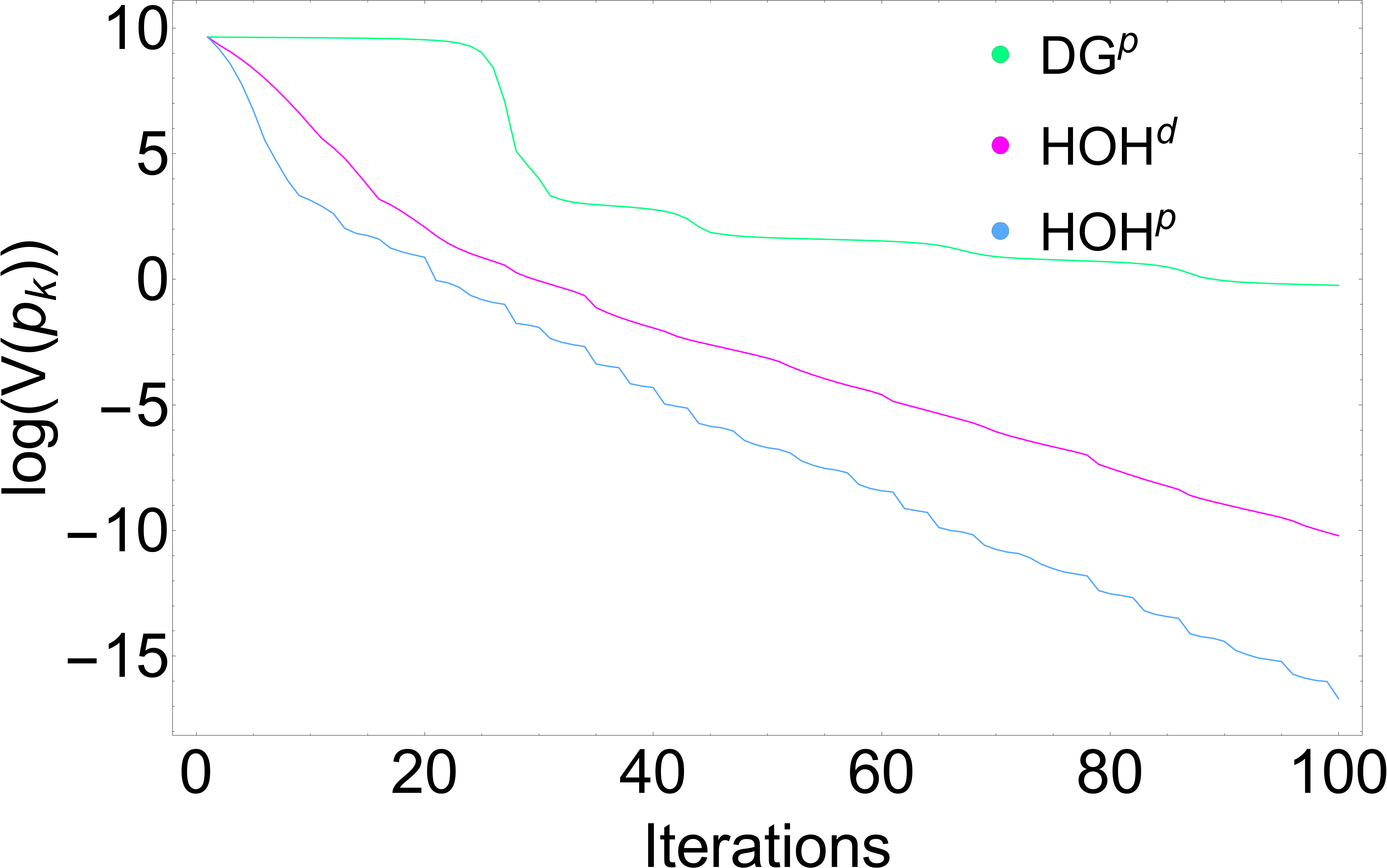}}
  \caption{Logarithmic regression example. (a) Evolution of the
    logarithm of the objective function under DG$^\pb$, HOH$^\db$,
    HOH$^\pb$, the heavy-ball method, and Nesterov's method starting
    from $x= (50,50, 50,50)$ and $v = (-0.1026, -0.09265, -0.1078,
    -0.0899)$.  (b) Corresponding evolution of the logarithm of the
  Lyapunov function along DG$^\pb$, HOH$^\db$, and
  HOH$^\pb$.}\label{figure:zoh} \vspace*{-1ex}
\end{figure} 

\section{Conclusions}

We have introduced a resource-aware control framework to the
discretization of accelerated optimization flows that specifically
takes advantage of their dynamical properties.  We have exploited
fundamental concepts from opportunistic state-triggering related to
the various ways of encoding the notion of valid Lyapunov
certificates, the use of sampled-data information, and the
construction of state estimators and holders to synthesize
variable-stepsize optimization algorithms that retain by design the
convergence properties of their continuous-time counterparts.
We believe these results open the way to a number of exciting research
directions.  Among them, we highlight the design of adaptive learning
schemes to refine the use of sampled data and optimize the algorithm
performance with regards to the objective function, the
characterization of accelerated convergence rates, employing tools and
insights from hybrid systems for analysis and design, enriching the
proposed designs by incorporating re-start schemes as triggering
conditions to avoid overshooting and oscillations, and developing
distributed implementations for network optimization problems.


\bibliography{alias,Main,Main-add,JC}
\bibliographystyle{IEEEtran}

\appendices
\setcounter{equation}{0}
\renewcommand{\theequation}{\thesection.\arabic{equation}}

\section{}\label{app:appendix}

Throughout the appendix, we make use of a number of basic facts that
we gather here for convenience,
\begin{subequations}\label{eq:aux-facts}
  \begin{align}
    f(x_*) - f(x) & \leq - \frac{\norm{\nabla
        f(x)}^2}{2L} \label{eq:aux-a}
    \\
    \frac{ \norm{\nabla f(x)}}{L} \leq \norm{x - x_*} & \leq
    \frac{\norm{\nabla f(x)}}{\mu} \label{eq:aux-c}
    \\
    f(y) - f(x) - \langle \nabla f(x),y-x\rangle & \leq 
    \frac{L}{2}\norm{y-x}^2 \label{eq:aux-d}
    \\
    \frac{1}{L}\norm{\nabla f(x) - \nabla f(y)}^2 & \le \langle \nabla
    f(x) - \nabla f(y), x - y \rangle \label{eq:aux-e}
    \\
    f(y) - f(x) - \langle \nabla f(x),y-x\rangle & \leq
    \frac{1}{2\mu}\norm{\nabla f(y) - \nabla f(x)}^2 \label{eq:aux-f}
  \end{align}
\end{subequations}
We also resort at various points to the expression of the gradient of
$V$,
\begin{align}\label{eq:gradient-V}
  \nabla V(p) & =
    \begin{bmatrix}
     \oneplusmus \nabla f(x) +\sqrt{\mu}v+2\mu(x-x_*)
      \\
      v+\sqrt{\mu}(x - x_*)
    \end{bmatrix} .
\end{align}

The following result is used in the proof of
Theorem~\ref{th:continuous-sampled}.

\begin{lemma}\label{lemma:bound}
  For $\beta_1,\dots,\beta_4 >0$, the function
  \begin{align}
    \label{eq:g}
    g(z) = \frac{\beta_3 + \beta_4z^2}{-\beta_1 + \beta_2 z}
  \end{align}
  is positively lower bounded on $(\beta_1/\beta_2,\infty)$.
\end{lemma}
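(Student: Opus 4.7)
The plan is to show that $g$ is continuous and strictly positive on the open interval $(\beta_1/\beta_2,\infty)$, blows up to $+\infty$ at both endpoints, and therefore attains its positive infimum at an interior critical point that I can locate explicitly.

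First, I would observe that on $(\beta_1/\beta_2,\infty)$ the denominator $-\beta_1+\beta_2 z$ is strictly positive while the numerator $\beta_3+\beta_4 z^2$ is strictly positive (since $\beta_3,\beta_4>0$), so $g(z)>0$ throughout. Next, for the boundary behavior: as $z\to(\beta_1/\beta_2)^+$ the numerator approaches $\beta_3+\beta_4(\beta_1/\beta_2)^2>0$ while the denominator tends to $0^+$, so $g(z)\to+\infty$; and as $z\to\infty$, $g(z)\sim (\beta_4/\beta_2)\,z\to+\infty$. By continuity, $g$ therefore attains a finite minimum on the open interval, and this minimum is strictly positive.

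To identify the minimizer, I would differentiate $g$ and set the numerator of $g'(z)$ to zero. A direct computation gives
\[
g'(z)=\frac{\beta_2\beta_4 z^2-2\beta_1\beta_4 z-\beta_2\beta_3}{(-\beta_1+\beta_2 z)^2},
\]
so critical points satisfy $\beta_2\beta_4 z^2-2\beta_1\beta_4 z-\beta_2\beta_3=0$. The quadratic formula yields two roots, only one of which is positive (and, one checks, lies in $(\beta_1/\beta_2,\infty)$), namely
\[
z_{\root}^{+}=\frac{\beta_1\beta_4+\sqrt{\beta_1^2\beta_4^2+\beta_2^2\beta_3\beta_4}}{\beta_2\beta_4},
\]
which matches the expression $z_{\root}^{+}$ referenced in the proof of Theorem~\ref{th:continuous-sampled}. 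Since $g\to+\infty$ at both ends of the interval and $z_{\root}^{+}$ is the unique critical point in this interval, it is the global minimizer, and
\[
g(z)\;\geq\; g(z_{\root}^{+})=\frac{\beta_3+\beta_4(z_{\root}^{+})^2}{-\beta_1+\beta_2\,z_{\root}^{+}}\;>\;0 \qquad \text{for all } z\in(\beta_1/\beta_2,\infty),
\]
giving the desired strictly positive lower bound.

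The proof is essentially a one-variable calculus exercise and contains no real obstacle; the only care needed is to verify that the positive root of the critical-point equation indeed lies in $(\beta_1/\beta_2,\infty)$ (equivalently, that $\beta_2 z_{\root}^{+}>\beta_1$), which follows immediately since $\beta_2 z_{\root}^{+}=\beta_1+\sqrt{\beta_1^2+\beta_2^2\beta_3/\beta_4}>\beta_1$.
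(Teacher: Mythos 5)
Your proof is correct and follows essentially the same route as the paper: compute $g'$, identify the unique critical point $z_{\root}^{+}$ in $(\beta_1/\beta_2,\infty)$, and conclude that the positive value $g(z_{\root}^{+})$ is the minimum. The only cosmetic difference is that you justify global minimality via the blow-up of $g$ at both endpoints of the interval, whereas the paper argues directly from the sign of $g'$ on either side of $z_{\root}^{+}$; both arguments are valid and interchangeable.
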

\begin{proof}
  The derivative of $g$ is
  \[
  g'(z) = \frac{-\beta_2 \beta_3 + \beta_4 z (-2 \beta_1 +
    \beta_2z)}{(\beta_1 - \beta_2 z)^2} .
  \]
  The solutions to $g'(z) = 0$ are then given by
  \begin{align}\label{eq:zroot}
    z_{\root}^\pm = \frac{\beta_1\beta_4 \pm
      \sqrt{\beta_2^2\beta_3\beta_4 +
        \beta_1^2\beta_4^2}}{\beta_2\beta_4}.
  \end{align}
  Note that $ z^-_{\root} < 0 < \beta_1/\beta_2 <z^+_{\root}$, $g'$ is
  negative on $(z^-_{\root}, z^+_{\root})$, and positive on
  $(z^+_{\root},\infty)$. Therefore the minimum value over $
  (\beta_1/\beta_2,\infty)$ is achieved at $z^+_{\root}$, and
  corresponds to $g(z^+_{\root})>0$.
\end{proof}


\begin{proof}[Proof of Proposition~\ref{prop:upper-bound-derivative}]
  We break out $ \frac{d}{dt} V(p(t)) +\frac{\sqrt{\mu}}{4} V({p}(t))$
  as follows
  \begin{align*}
    &\frac{d}{dt}V(\hat{p} + t \Xhb^a(\hat{p})) + \frac{\sqrt{\mu}}{4}
    V(\hat{p}+t\Xhb^a(\hat{p})) =
    \\
    &= \underbrace{\langle \nabla V(\hat{p}),\Xhb^a(\hat{p})\rangle +
      \frac{\sqrt{\mu}}{4} V(\hat{p})}_{\textrm{Term I + II + III}}
    \\
    & \quad + \underbrace{\langle \nabla V(\hat{p} + t\Xhb^a(\hat{p})) -\nabla
      V(\hat p),\Xhb^a(\hat{p}) \rangle }_{\textrm{Term IV + V}}
    \\
    & \quad + \frac{\sqrt{\mu}}{4}(\underbrace{V(\hat{p} + t\Xhb^a(\hat{p})) -
      V(\hat{p})}_{\textrm{Term VI}}),
  \end{align*}
  and bound each term separately.
  
  $\mathbf{ Term \ I + II + III }$. 
  %
  From the definition~\eqref{eq:continuous-hb-lyapunov} of $V$ and the
  fact that $\norm{y_1 + y_2}^2 \leq 2\norm{y_1}^2 + 2\norm{y_2}^2$,
  we have
  \begin{align*}
    V(\hat{p}) & =\oneplusmus(f(\hat{x}) - f(x_*))
    +\frac{1}{4}\norm{\hat{v}}^2
    \\
    & \quad + \frac{1}{4}\norm{\hat v + 2\sqrt{\mu}(\hat{x} - x_*)}^2
    \\
    & \leq\oneplusmus(f(\hat{x}) - f(x_*))
    \\
    & \quad +\frac{1}{4}\norm{\hat{v}}^2 + \frac{2}{4}\norm{\hat v}^2 +
    \frac{2}{4}\norm{2\sqrt{\mu}(\hat{x} - x_*)}^2
    \\
    & = \oneplusmus (f(\hat{x}) - f(x_*))
    +\frac{3}{4}\norm{\hat{v}}^2 + 2\mu\norm{\hat{x} - x_*}^2.
  \end{align*}
  Using this bound, we obtain
  \begin{align*}
    & \langle \nabla V(\hat{p}), X^a_{\operatorname{hb}}(\hat{p})
    \rangle + \frac{\sqrt{\mu}}{4} V(\hat{p})
    \\
    & \leq -\sqrt{\mu}\norm{\hat{v}}^2 + \frac{\sqrt{\mu}}{4}
    \oneplusmus(f(\hat{x}) - f(x_*)) +
    \frac{3\sqrt{\mu}}{16}\norm{\hat{v}}^2
    \\
    & \quad + \frac{\mu\sqrt{\mu}}{2}\norm{\hat{x} - x_*}^2
    +\oneplusmus\langle \nabla f(\hat{x}) - \nabla f(\hat{x} +
    a\hat{v}),\hat{v} \rangle
    \\
    & \quad - \sqrt{\mu}\oneplusmus \langle \nabla f(\hat{x} +
    a\hat{v}),\hat{x} - x_* \rangle.
  \end{align*}
  Writing $0$ as $0= a \hat{v}-a \hat{v}$ and using strong convexity,
  we can upper bound $\langle \nabla f(\hat{x} + a\hat{v}),
  x_*  - \hat{x}\rangle$ in the last summand by the expression
  \begin{align*}
    f(x_*) - f(\hat{x} + a\hat{v}) - \frac{\mu}{2} \norm{\hat{x} +
      a\hat{v} - x_*}^2 + \langle \nabla f(\hat{x} +a\hat{v}),a\hat{v}
    \rangle.
  \end{align*}
  Substituting this bound above and re-grouping terms,
  \begin{align*}
    & \langle \nabla V(\hat{p}), X^a_{\operatorname{hb}}(\hat{p})
    \rangle + \frac{\sqrt{\mu}}{4} V(\hat{p}) \leq
    -\sqrt{\mu}\norm{\hat{v}}^2
    \\
    & \quad + \underbrace{\sqrt{\mu}\oneplusmus \Big(
      \frac{1}{4} (f(\hat{x}) - f(x_*)) + f(x_*) - f(\hat{x} +
      a\hat{v})\Big)}_{\textrm{(a)}}
    \\
    & \quad +\frac{3\sqrt{\mu}}{16}\norm{\hat{v}}^2 +\oneplusmus\langle \nabla f(\hat{x}) - \nabla f(\hat{x} +
    a\hat{v}),\hat{v} \rangle
    \\
    & \quad \underbrace{+ \frac{\mu\sqrt{\mu}}{2}\norm{\hat{x} -
        x_*}^2+ \sqrt{\mu}\oneplusmus(-\frac{\mu}{2}\norm{\hat{x} + a\hat{v} -
        x_*}^2)}_{\textrm{(b)}}
    \\
    & \quad +\sqrt{\mu}\oneplusmus\langle \nabla f(\hat{x}
    +a\hat{v}),a\hat{v} \rangle.
  \end{align*}
  Observe that
  \begin{align*}
    \textrm{(a)} & = \sqrt{\mu} \oneplusmus\big(- \frac{3}{4}
    (f(\hat{x}) - f(x_*)) + f(\hat{x}) - f(\hat{x} +a \hat{v})\big) ,
    \\
    \textrm{(b)} & \leq - \frac{\mu^2\sqrt{s}}{2}\norm{\hat{x} -
      x_*}^2 +\oneplusmus \mu^{3/2}\norm{\hat{x} - x_*}\norm{a
      \hat {v}}
    \\
    & \quad - \oneplusmus\mu^{3/2}/2\norm{a\hat{v}}^2 ,
  \end{align*}
  where, in the expression of~(a), we have expressed $0$ as $0 =
  3/4(f(\hat{x})- f(\hat{x})) $ and, in the expression of~(b), we have
  expanded the square and used the Cauchy-Schwartz
  inequality~\cite{SL:93}.  Finally, resorting
  to~\eqref{eq:aux-facts}, we obtain
  \begin{align*}
    & \langle \nabla V(\hat{p}), X^a_{\operatorname{hb}}(\hat{p})
    \rangle + \frac{\sqrt{\mu}}{4} V(\hat{p}) \leq C_{\ET}(\hat p;a) =
    C_{\ST} (\hat p;a) .
  \end{align*}
  
  $\bullet$ $\mathbf{ Term \ IV + V }$. Using~\eqref{eq:gradient-V} we have
  \begin{align*}
    & \nabla V(\hat{p}+t\Xhb^a(\hat{p})) =
    \\
    &
    \begin{bmatrix}
      \oneplusmus\nabla f(\hat{x} +t\hat{v})
      +\sqrt{\mu}\hat{v}- 2\mu t\hat  v  \\
      -t\sqrt{\mu}\oneplusmus\nabla f(\hat{x} + a\hat{v})
      +2\mu(\hat{x} + t\hat v -x_*)
      \\
      \noalign{\medskip} \hat{v} - 2t\sqrt{\mu} \hat{v} - t\oneplusmus\nabla f(\hat{x} +a\hat{v}) +\sqrt{\mu}(\hat{x} + t\hat v-
      x_*)
    \end{bmatrix}.  
  \end{align*}
  Therefore, $\nabla V(\hat{p} + t\Xhb^a(\hat{p})) -\nabla V(\hat{p})$
  reads
  \begin{align*}
    \begin{bmatrix}
     \oneplusmus(\nabla f(\hat x \!+\! t \hat v) \!-\! \nabla f(\hat x))
      \!-\!t\sqrt{\mu}\oneplusmus\nabla f(\hat{x} \!+\! a\hat{v})
      \\
      \noalign{\medskip} -\sqrt{\mu}t \hat v - t\oneplusmus\nabla
      f(\hat x + a \hat v)
    \end{bmatrix}
  \end{align*}
  and hence
  \begin{align*}
    & \langle \nabla V(\hat{p} + t\Xhb^a(\hat{p})) -\nabla
    V(\hat{p}),\Xhb^a(\hat{p}) \rangle
    \\
    & = \oneplusmus\langle \nabla f(\hat{x}+t\hat{v}) -\nabla
    f(\hat{x}), \hat v \rangle
    \\
    & \quad + 2t \sqrt{\mu}\oneplusmus\langle \nabla f(\hat{x} +a
    \hat{v}),\hat{v}\rangle + 2t\mu\norm{\hat{v}}^2
    \\
    & \quad + t\mu_s\norm{\nabla f(\hat{x} +a\hat{v})}^2.
  \end{align*}
  The RHS of the last expression is precisely $A_{\ET}(\hat p,t;a)$.
  Using the $L$-Lipschitzness of $\nabla f$, one can see that
  $A_{\ET}(\hat p,t;a) \le A_{\ST}(p;a) t$.
  
  $\bullet$ $\mathbf{ Term \ VI
  }$. From~\eqref{eq:continuous-hb-lyapunov},
  \begin{align*}
    & V(\hat{p}+t\Xhb^a(\hat{p})) - V(\hat{p}) =\oneplusmus(f(\hat{x} + t\hat{v}) - f(x_*))
    \\
    & \quad + \frac{1}{4}\norm{\hat{v}- 2t\sqrt{\mu}\hat{v}
      -t\oneplusmus\nabla f(\hat{x} + a\hat{v})}^2
    \\
    & \quad + \frac{1}{4}\left\lVert \hat{v} \right. -
    2t\sqrt{\mu}\hat{v} -t\oneplusmus\nabla f(\hat{x} +a\hat{v})
    \\
    & \quad + \left. 2\sqrt{\mu}(\hat{x} + t\hat{v} - x_*)\right\rVert^2
    - \oneplusmus(f(\hat{x}) - f(x_*))
    \\
    & \quad - \frac{1}{4}\norm{\hat{v}}^2 -\frac{1}{4}\norm{\hat{v}
      +2\sqrt{\mu}(\hat{x} - x_*)}^2.
  \end{align*}
  Expanding the squares in the second and third summands, and
  simplifying, we obtain
  \begin{align*}
    & V(\hat{p}+t\Xhb^a(\hat{p})) - V(\hat{p}) = \oneplusmus(f(\hat{x} + t\hat{v}) - f(\hat{x}))
    \\
    & \quad + \frac{1}{4}\norm{-2t\sqrt{\mu}\hat{v} -t\oneplusmus\nabla f(\hat{x} + a\hat{v})}^2
    \\
    & \quad + \frac{1}{2}\langle \hat{v}, -2t\sqrt{\mu}\hat{v}
    -t\oneplusmus\nabla f(\hat{x} +a\hat{v})\rangle
    \\
    & \quad + \frac{1}{4}\norm{-t\oneplusmus\nabla f(\hat{x}
      +a\hat{v})}^2
    \\
    & \quad +\frac{1}{2}\langle \hat{v} +2\sqrt{\mu}(\hat{x} - x_*),
    -t(\oneplusmus\nabla f(\hat{x} +a\hat{v})\rangle
    \\
    & = \oneplusmus(f(\hat{x} + t\hat{v}) - f(\hat{x}))
    \\
    & \quad + \frac{1}{4}\norm{-2t\sqrt{\mu}\hat{v} -t\oneplusmus\nabla f(\hat{x} + a\hat{v})}^2
    \\
    & \quad -t\sqrt{\mu}\norm{\hat{v}}^2 - t \oneplusmus\langle
    \hat{v} ,\nabla f(\hat{x} + a\hat{v}) \rangle
    \\
    & \quad + \frac{1}{4}\norm{-t\oneplusmus\nabla f(\hat{x}
      +a\hat{v})}^2
    \\
    & \quad +\langle \sqrt{\mu}(\hat{x} - x_*), -t\oneplusmus\nabla f(\hat{x} +a\hat{v})\rangle.
  \end{align*}
  Note that
  \begin{align*}
    & \langle x_* - \hat{x},\nabla f(\hat{x} + a\hat{v}) \rangle
    \\
    & = \langle x_* - \hat{x} - av,\nabla f(\hat{x} + a\hat{v}) \rangle
    + \langle a\hat{v},\nabla f(\hat{x} + a\hat{v}) \rangle
    \\
    & \leq - \frac{\norm{\nabla f(\hat{x} +a\hat{v} )}^2}{L} + \langle
    a\hat{v},\nabla f(\hat{x} + a\hat{v}) \rangle ,
  \end{align*}
  where in the inequality we have used~\eqref{eq:aux-e} with $x = \hat
  x + a \hat v$ and $y = x_*$. Using this in the equation above, one
  identifies the expression of $B_{\ET} (p,t;a)$.  Finally,
  applying~\eqref{eq:aux-d}, one can show that $B_{\ET}(p,t;a) \leq
  B_{\ST}^l(p;a)t + B^q_{\ST}(p;a)t^2$, concluding the proof.
\end{proof}

\begin{proof}[Proof of
  Proposition~\ref{prop:upper-bound-derivative-hoh}]
  For convenience, let
  \begin{align*}
    \Xhb^{a,\hat{p}} (p) & =
    \begin{bmatrix}
      v
      \\
      - 2\sqrt{\mu}v - \oneplusmus\nabla f(\hat{x} + a\hat{v})
    \end{bmatrix},
  \end{align*}
  where $\hat p = [\hat x, \hat v]$. We next provide a bound for the
  expression
  \begin{align*}
    &\frac{d}{dt}V(p(t))) + \frac{\sqrt{\mu}}{4} V(p(t)) =
    \underbrace{\langle \nabla
      V(\hat{p}),\Xhb^{a,\hat{p}}(\hat{p})\rangle +
      \frac{\sqrt{\mu}}{4} V(\hat{p})}_{\textrm{Term I + II + III}}
    \\
    & \quad + \underbrace{\langle \nabla V(p(t)) -\nabla
      V(\hat{p}),\Xhb^{a,\hat{p}}(p(t)) \rangle }_{\textrm{Term IV}}
    \\
    & \quad + \underbrace{\langle \nabla V(\hat{p}),
      \Xhb^{a,\hat{p}}(p(t))- \Xhb^{a,\hat{p}}(\hat{p})
      \rangle}_{\textrm{Term V}} +
    \frac{\sqrt{\mu}}{4}\underbrace{(V(p(t)) -
      V(\hat{p}))}_{\textrm{Term VI}}.
  \end{align*}
  Next, we bound each term separately.
  
  $\bullet$ $\mathbf{ Term \ I + II + III }$. Since $ \Xhb^{a,\hat{p}}
  (\hat{p})= \Xhb^a(\hat{p})$, this term is exactly the same as Term~I
  + II + III in the proof of
  Proposition~\ref{prop:upper-bound-derivative}, and hence the bound
  obtained there is valid.

  $\bullet$ $\mathbf{ Term \ IV }$. 
  Using~\eqref{eq:gradient-V}, we have
  \begin{align*}
    & \langle \nabla V(p(t)) - \nabla V(\hat{p}),
    \Xhb^{a,\hat{p}}(p(t)) \rangle
    \\
    & = \oneplusmus\langle \nabla f(x(t)) - \nabla f(\hat{x}),v(t)
    \rangle
    \\
    & \quad + \sqrt{\mu}\langle v(t) - \hat{v} ,v(t) \rangle + 2\mu
    \langle x(t) - \hat{x},v(t) \rangle
    \\
    & \quad - 2\sqrt{\mu}\langle v(t) - \hat{v},v(t)\rangle
    - \oneplusmus \langle v(t) - \hat{v},\nabla f(\hat{x} +
    a\hat{v}) \rangle
    \\
    & \quad -2\mu \langle x(t) - \hat{x},v(t) \rangle
 - \oneplusmus \sqrt{\mu}\langle x(t) - \hat{x}, \nabla
    f(\hat{x} + a\hat{v}) \rangle
    \\
    & = \oneplusmus\langle \nabla f(x(t)) - \nabla f(\hat{x}),v(t)
    \rangle
 -\sqrt{\mu}\langle v(t) - \hat{v} ,v(t) \rangle
    \\
    & \quad -\oneplusmus \langle v(t) - \hat{v},\nabla f(\hat{x} +
    a\hat{v}) \rangle
    \\
    & \quad - \oneplusmus \sqrt{\mu}\langle x(t) - \hat{x}, \nabla
    f(\hat{x} + a\hat{v}) \rangle,
  \end{align*}
  from where we obtain $\mathrm{ Term \ IV } \le \fA_{\ET} (\hat
  p,t;a)$. Now, using $0= \hat v - \hat v$, the $L$-Lipschitzness of
  $\nabla f$, and the Cauchy-Schwartz inequality, we have
  \begin{align*}
    | \fA_{\ET} (\hat p,t;a) | & \le \oneplusmus
    L \norm{x(t) -\hat{x}} ( \norm{v(t) - \hat{v}} + \norm{\hat{v}})
    \\
    & \quad + \sqrt{\mu}\norm{v(t) - \hat{v}}^2 + \sqrt{\mu}\norm{v(t)
      - \hat{v}}\norm{\hat{v}}
    \\
    & \quad + \oneplusmus\norm{v(t) - \hat{v}}\norm{\nabla f(\hat{x} +
      a \hat{v})}
    \\
    & \quad + \oneplusmus\sqrt{\mu}\norm{x(t) - \hat{x}}\norm{\nabla
      f(\hat{x} + a \hat{v})}.
  \end{align*}
  Using~\eqref{eq:hoh-flow}, the triangle inequality, and $1 -
  e^{-2\sqrt{\mu}t}\leq 2\sqrt{\mu}t$, we can write
  \begin{subequations}\label{differencexv}
    \begin{align}
      \norm{x(t) - \hat x}& \leq \frac{t}{2\sqrt{\mu}}\norm{2
        \sqrt{\mu}\hat v + \oneplusmus\nabla f(\hat x + a \hat v) }
      \nonumber
      \\
      & \quad + \frac{\oneplusmus t}{2\sqrt{\mu}}\norm{\nabla f(\hat x
        + a \hat v)}, \label{eq:aux-diffx}
      \\
      \norm{v(t) - \hat v} &\leq t\norm{2 \sqrt{\mu}\hat v
        +\oneplusmus\nabla f(\hat x + a \hat v)}. \label{eq:aux-diffv}
    \end{align}
  \end{subequations}
  Substituting into the bound for $| \fA_{\ET} (\hat p,t;a) |$ above,
  we obtain
  \begin{align*}
    | \fA_{\ET} (\hat p,t;a) | \le \fA^q_{ST} (\hat p;a) t^2 +
    \fA_{\ST}^l (\hat p;a)t
  \end{align*}
  as claimed.

  $\bullet$ $\mathbf{ Term \ V }$.  Using~\eqref{eq:gradient-V}, we have
  \begin{align*}
    & \Big \langle \nabla V(\hat{p}), \Xhb^{a,\hat{p}}(p(t))
    -\Xhb^{a,\hat{p}}(\hat{p}) \Big \rangle
    \\
    & = \langle
    \begin{bmatrix}
      \oneplusmus\nabla f(\hat{x}) +\sqrt{\mu}\hat v+2\mu(\hat{x}-x_*)
      \\
      \hat v+\sqrt{\mu}(\hat{x} - x_*)
    \end{bmatrix}
    ,
    \\
    & \quad
    \begin{bmatrix}
      v(t) - \hat{v}
      \\
      -2\sqrt{\mu}(v(t) - \hat{v})
    \end{bmatrix} \rangle
    \\
    & = \oneplusmus\langle \nabla f(\hat{x}), v(t) -
    \hat{v}\rangle +\sqrt{\mu}\langle \hat{v} , v(t) - \hat{v} \rangle
    \\
    & \quad +2\mu\langle \hat{x} - x_*,v(t) - \hat{v}\rangle
    -2\sqrt{\mu} \langle \hat{v},v(t) - \hat{v} \rangle
    \\
    & \quad -2\mu\langle \hat{x} - x_*,v(t) - \hat{v} \rangle =
    \fD_{\ET} (\hat p,t;a).
  \end{align*}
  Taking the absolute value and using the Cauchy-Schwartz inequality
  in conjunction with \eqref{differencexv}, we obtain the expression
  corresponding to~$\fD_{\ST}$.
  
  $\bullet$ $\mathbf{ Term\ VI }$.
  From~\eqref{eq:continuous-hb-lyapunov},
  \begin{align*}
    & V(p(t)) - V(\hat{p}) = \oneplusmus(f(x(t)) - f(x_*)) +
    \frac{1}{4}\norm{v(t)}^2
    \\
    & \quad + \frac{1}{4}\norm{v(t) + 2\sqrt{\mu}(x(t) - x_*)}^2
    \\
    & \quad -\oneplusmus (f(\hat x) - f(x_*)) - \frac{1}{4}\norm{\hat
      v}^2
    \\
    & \quad - \frac{1}{4}\norm{\hat v + 2\sqrt{\mu}(\hat x - x_*)}^2.
  \end{align*}
  Expanding the third summand (using $x(t) = \hat x + (x(t) - \hat x)$
  and $v(t) = \hat v + (v(t) - \hat v)$) as $ \norm{\hat v
    +2\sqrt{\mu}(\hat x - x_*) }^2 + 2\langle \hat v+ 2\sqrt{\mu}(\hat
  x - x_*), v(t) - \hat v + 2\sqrt{\mu}(x(t) - \hat x) \rangle
  +\norm{v(t) - \hat v + 2\sqrt{\mu}(x(t) - \hat x)}^2$, we obtain
  after simplification
  \begin{align}
    & V(p(t)) - V(\hat{p}) = \oneplusmus (f(x(t))-f(\hat{x}))
    \label{sum_of_terms}
    \\
    &\quad + \frac{1}{4} (\norm{v(t)}^2 - \norm{\hat{v}}^2) +
    \frac{1}{4} \norm{v(t) - \hat v + 2\sqrt{\mu} (x(t) - \hat x)}^2
    \nonumber
    \\
    &\quad + \frac{1}{2} \langle v(t) - \hat v + 2 \sqrt{\mu} (x(t) -
    \hat x),\hat v + 2\sqrt{\mu}(\hat x-x_{\ast}) \rangle.  \notag
  \end{align}
  Using~\eqref{eq:hoh-flow}, we have
  \begin{align*}
    & \langle v(t) - \hat v + 2\sqrt{\mu} (x(t) - \hat x), 2
    \sqrt{\mu} (\hat{x}-x_{\ast}) \rangle \nonumber
    \\
    & = -2 \sqrt{\mu}\oneplusmus t \langle \nabla f(\hat{x} + a
    \hat{v}), \hat{x} - x_{\ast} \rangle \nonumber
    \\
    & = -2 \sqrt{\mu} \oneplusmus t \langle \nabla f(\hat{x} + a
    \hat{v}), \hat{x} + a \hat{v}- x_{\ast} \rangle \nonumber
    \\
    & \quad -2 \sqrt{\mu} \oneplusmus t \langle \nabla f(\hat{x} + a
    \hat{v}), - a \hat{v} \rangle \nonumber
    \\
    & \leq -2 \sqrt{\mu} \oneplusmus t \frac{\norm{\nabla f(\hat{x} +
        a \hat{v})}^2}{L}
    \\
    & \quad +2 \sqrt{\mu} \oneplusmus t \langle \nabla f(\hat{x} + a
    \hat{v}), a \hat{v} \rangle ,
  \end{align*}
  where we have used~\eqref{eq:aux-e} to derive the
  inequality. Substituting this bound into~\eqref{sum_of_terms}, we
  obtain $\frac{\sqrt{\mu}}{4}(V(p(t)) - V(\hat{p})) \le \fB_{\ET}
  (\hat p,t;a)$.  To obtain the $\ST$-expressions, we bound each
  remaining term separately as follows. Note that
  \begin{align*}
    & f(x(t)) - f(\hat{x}) \underbrace{\leq}_{\eqref{eq:aux-f}}
    \langle \nabla f(\hat{x}), x(t) - \hat{x}\rangle +
    \frac{L^2}{2\mu}\norm{x(t) - \hat{x}}^2
    \\
    &\leq \norm{x(t) - \hat x} \norm{\nabla f(\hat x)} +
    \frac{L^2}{2\mu}\norm{x(t) - \hat{x}}^2
    \\
    &\leq \frac{t}{2\sqrt{\mu}}\norm{2 \sqrt{\mu}\hat v +
      \oneplusmus\nabla f(\hat x + a \hat v) }\norm{\nabla f(\hat x)}
    \\
    & \quad + \frac{\oneplusmus t}{2\sqrt{\mu}}\norm{\nabla f(\hat x +
      a \hat v)}\norm{\nabla f(\hat x)}
    \\
    &\quad + \frac{L^2}{2\mu}(\frac{t^2}{4\mu}\norm{2 \sqrt{\mu}\hat v
      + \oneplusmus\nabla f(\hat x + a \hat v) }^2
    \\
    &\quad + \frac{\mu_st^2}{4\mu}\norm{\nabla f(\hat x + a
      \hat v)}^2
    \\
    & \quad+ \frac{\oneplusmus t^2}{2\mu}\norm{2 \sqrt{\mu}\hat
      v + \oneplusmus\nabla f(\hat x + a \hat v)} \norm{\nabla f(\hat
      x + a \hat v)}),
  \end{align*}
  where we have used~\eqref{eq:aux-diffx} to obtain the last inequality.
  Next,
  \begin{align*}
    &\norm{v(t)}^2 - \norm{\hat{v}}^2
    = \norm{v(t) - \hat{v}}^2 + 2\langle v(t) - \hat{v},\hat{v}
    \rangle
    \\
    &\leq t^2\norm{2 \sqrt{\mu}\hat v + \oneplusmus\nabla f(\hat x + a
      \hat v)}^2
    \\
    & \quad + 2t\norm{2 \sqrt{\mu}\hat v + \oneplusmus\nabla f(\hat x
      + a \hat v)}\norm{\hat v},
  \end{align*}
  where we have used~\eqref{eq:aux-diffv} to obtain the last
  inequality. Using $\norm{y_1 + y_2}^2\leq 2\norm{y_1}^2 +
  2\norm{y_2}^2 $, we bound
  \begin{align*}
    & \norm{v(t) \!-\! \hat v \!+\! 2\sqrt{\mu} (x(t) \!-\! \hat x)}^2 \leq
    2\norm{v(t) \!-\! \hat v}^2 \!+\! 8\mu\norm{x(t) \!-\! \hat x}^2
    \\
    &\leq 2t^2\norm{2 \sqrt{\mu}\hat v +\oneplusmus \nabla f(\hat x +
      a \hat v)}^2 + 4 \sqrt{\mu}t \cdot
    \\
    &\quad \cdot \big(\norm{2 \sqrt{\mu}\hat v + \oneplusmus\nabla
      f(\hat x + a \hat v) } + \oneplusmus \norm{\nabla f(\hat x + a
      \hat v)} \big)^2,
  \end{align*}
  where we have used~\eqref{differencexv}. Finally, 
  \begin{align*}
    \langle v(t) - \hat v + 2\sqrt{\mu}(x(t) - \hat x), \hat v\rangle
    \leq -\oneplusmus t \langle \nabla f(\hat{x} + a \hat{v}), \hat v
    \rangle .
  \end{align*}
  Employing these bounds in the expression of $\fB_{\ET}$, we obtain $
  | \fB_{\ET} (\hat p,t;a) | \le \fB^q_{ST} (\hat p;a) t^2 +
  \fB_{\ST}^l (\hat p;a)t$, as claimed.
\end{proof}

\begin{IEEEbiography}[{\includegraphics[width=1in,height=1.25in,clip,keepaspectratio]{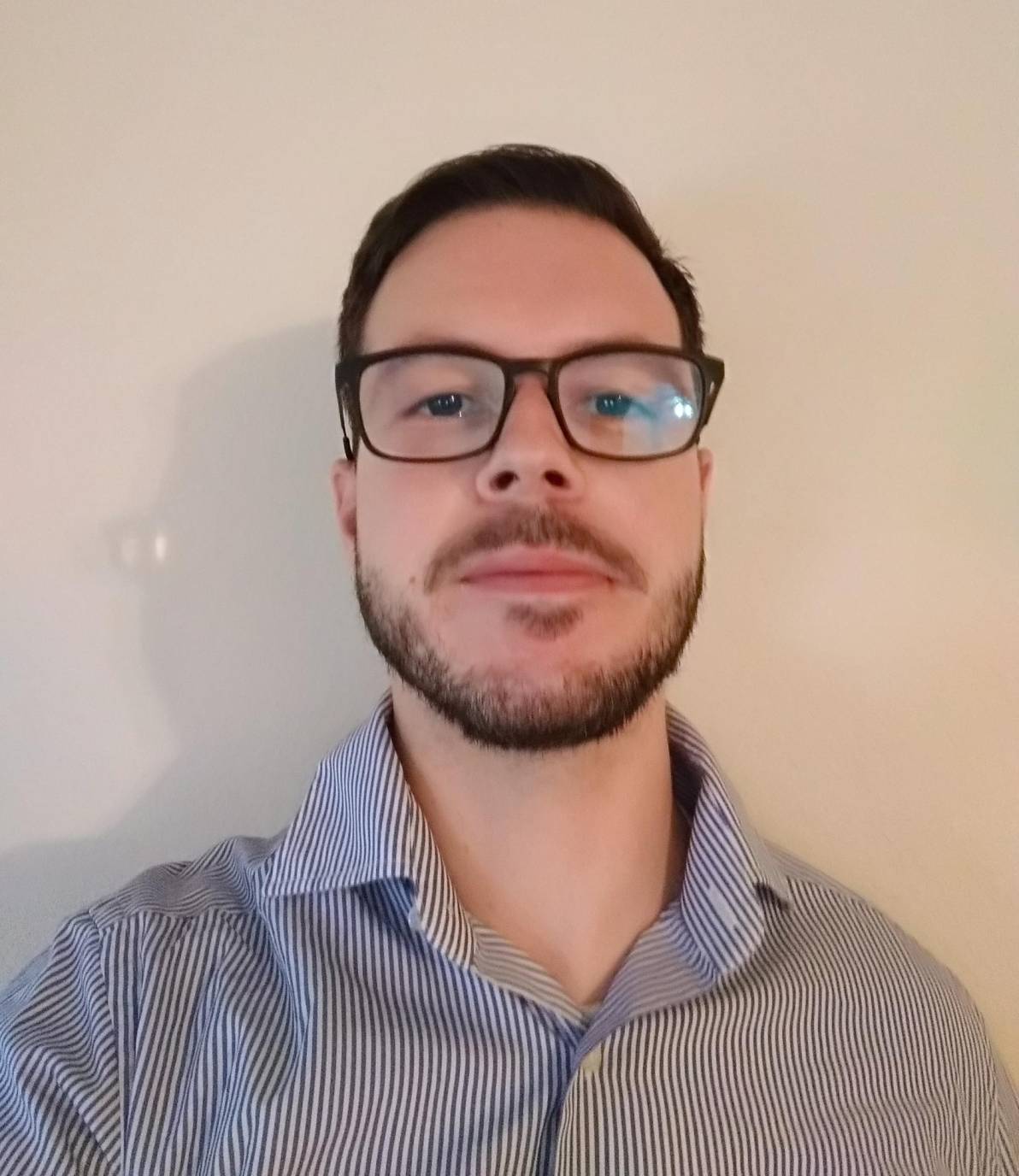}}]{Miguel
    Vaquero}
  was born in Galicia, Spain. He received his Licenciatura and
  Master's degree in mathematics from the Universidad de Santiago de
  Compostela, Spain and the Ph.D. degree in mathematics from Instituto
  de Ciencias Matem\'aticas (ICMAT), Spain in $2015$. He was then a
  postdoctoral scholar working on the ERC project ``Invariant
  Submanifolds in Dynamical Systems and PDE'' also at ICMAT. Since
  October 2017, he has been a postdoctoral scholar at the Department
  of Mechanical and Aerospace Engineering of UC San Diego. He will
  start in $2021$ as an Assistant Professor at the School of Human
  Sciences and Technology of IE University, Madrid, Spain.  His
  interests include optimization, dynamical systems, control theory,
  machine learning, and geometric mechanics.
\end{IEEEbiography}

\vspace*{-1ex}

\begin{IEEEbiography}[{\includegraphics[width=1in,height=1.25in,clip,keepaspectratio]{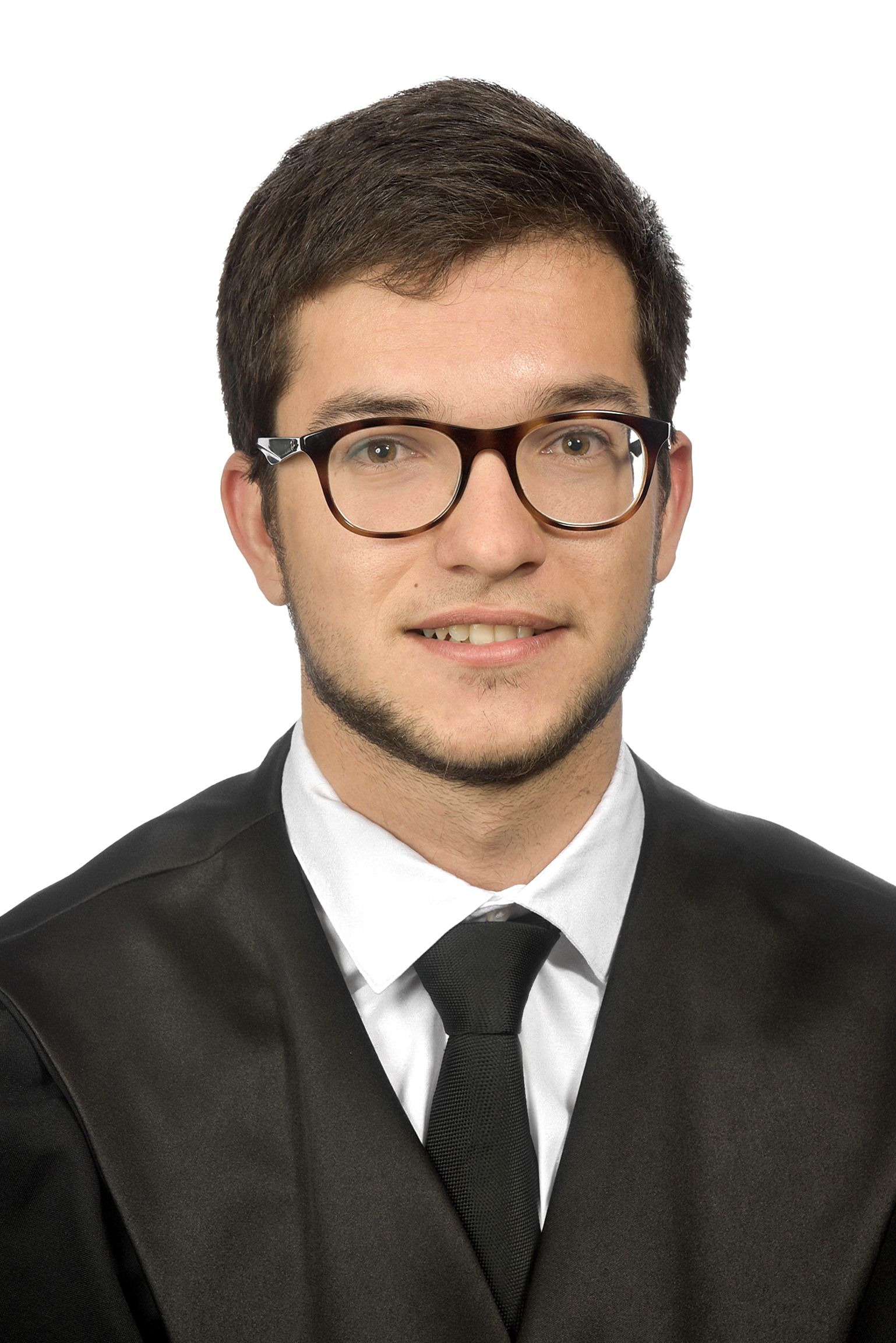}}]{Pol Mestres}
  was born in Catalonia, Spain in 1997. He is currently a student of
  the Bachelor's Degree in Mathematics and the Bachelor's Degree in
  Engineering Physics at Universitat Polit\`{e}cnica de Catalunya,
  Barcelona, Spain. He was a visiting scholar at University of
  California, San Diego, CA, USA, from September 2019 to March 2020.
\end{IEEEbiography}

\vspace*{-1ex}

\begin{IEEEbiography}[{\includegraphics[width=1in,height=1.25in,clip,keepaspectratio]{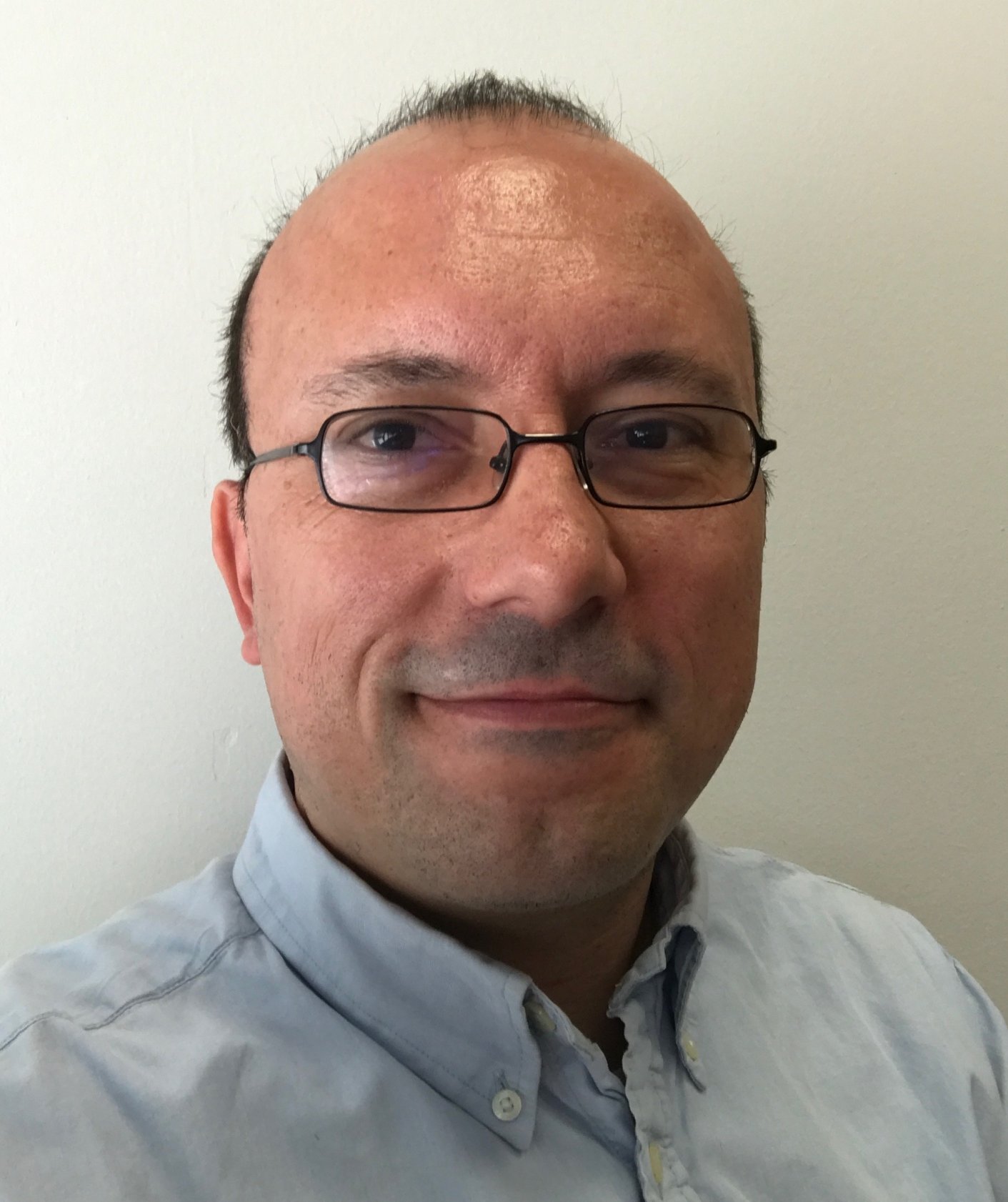}}]{Jorge
    Cort\'{e}s}
  (M'02, SM'06, F'14) received the Licenciatura degree in mathematics
  from Universidad de Zaragoza, Zaragoza, Spain, in 1997, and the
  Ph.D. degree in engineering mathematics from Universidad Carlos III
  de Madrid, Madrid, Spain, in 2001. He held postdoctoral positions
  with the University of Twente, Twente, The Netherlands, and the
  University of Illinois at Urbana-Champaign, Urbana, IL, USA. He was
  an Assistant Professor with the Department of Applied Mathematics
  and Statistics, University of California, Santa Cruz, CA, USA, from
  2004 to 2007. He is currently a Professor in the Department of
  Mechanical and Aerospace Engineering, University of California, San
  Diego, CA, USA. He is the author of Geometric, Control and Numerical
  Aspects of Nonholonomic Systems (Springer-Verlag, 2002) and
  co-author (together with F. Bullo and S.  Mart{\'\i}nez) of
  Distributed Control of Robotic Networks (Princeton University Press,
  2009).  He is a Fellow of IEEE and SIAM. At the IEEE Control Systems
  Society, he has been a Distinguished Lecturer (2010-2014), and is
  currently its Director of Operations and an elected member
  (2018-2020) of its Board of Governors.  His current research
  interests include distributed control and optimization, network
  science, resource-aware control, nonsmooth analysis, reasoning and
  decision making under uncertainty, network neuroscience, and
  multi-agent coordination in robotic, power, and transportation
  networks.
\end{IEEEbiography}

\end{document}